\documentclass[a4paper,fleqn]{cas-sc}
\usepackage{float}
\usepackage{soul}
\usepackage{graphicx}
\usepackage{amsthm}  
\usepackage{amsmath}
\usepackage{mathrsfs}                
\usepackage{amsbsy}
\usepackage{amssymb}   
\usepackage{mathtools} 
\usepackage{color}		
\usepackage{epsfig}
\usepackage{mathrsfs}
\usepackage[normalem]{ulem}
\usepackage{xcolor}

\usepackage{sidecap}
\usepackage{caption}
\usepackage{cases}
\usepackage{enumerate}

\newtheorem{theorem}{Theorem}[section]
\newtheorem{lemma}{Lemma}[section]

\newtheorem{proposition}{Proposition}[section]

\newtheorem{remark}{Remark}[section]
\let \hat=\widehat
\numberwithin{equation}{section}

\makeatletter
\renewenvironment{proof}[1][\proofname]{\par
	\pushQED{\qed}%
	\normalfont \topsep6\p@\@plus6\p@\relax
	\trivlist
	\item[\hskip\labelsep
	\itshape
	#1\@addpunct{.}]\ignorespaces
}{%
	\popQED\endtrivlist\@endpefalse
}
\makeatother

\begin{document}
	\let\WriteBookmarks\relax
	\def\floatpagepagefraction{1}
	\def\textpagefraction{.001}
	\shorttitle{Stability for Micropolar equations}
	\shortauthors{J. Qi \& L. Yu}
	
	\title [mode = title]{Stability for the 2D Micropolar equations near Couette flow via Green's function method}
	\tnotemark[1]   
	\tnotetext[1]{This document is the results of the research project funded by  the National Natural Science Foundation of China (No. 12271357 and 11831011).}
	

	\author[1]{Jie Qi}
	\cormark[1]
	\ead{24310454@tongji.edu.cn}
	
	
	\affiliation[1]{organization={School of Mathematical Sciences,
			Key Laboratory of Intelligent Computing and Applications (Ministry of Education), 
			Tongji University},
		addressline={Siping Road 1239}, 
		city={Shanghai},
		postcode={200092}, 
		country={CHINA}}
	
	\author[2]{Lei Yu}[orcid=0000-0003-2817-4742]
	
	\ead{yu_lei@tongji.edu.cn}
	
	\affiliation[2]{organization={School of Mathematical Sciences,
			Key Laboratory of Intelligent Computing and Applications (Ministry of Education), 
			Tongji University},
		addressline={Siping Road 1239}, 
		city={Shanghai},
		postcode={200092}, 
		country={CHINA}}

	\cortext[cor1]{Corresponding author}
	
	
	\begin{abstract}
		In this paper, we use the Green's function method to study the stability of solutions to micropolar equations in the whole space. An essential difficulty in estimating the Green’s function lies in having to treat a coupled system of equations with variable coefficients, which is a situation completely different from those encountered in previous Green’s function  that only involved scalar equations. In this paper, we firstly propose a dominant factor extraction method. After successfully obtaining the estimates of the Green's function for the micropolar equations, we prove that if the initial data $(m_0, \omega_0)$     satisfy $\|(m_0, \omega_{0})\|_{L^1\cap L^{\infty}}\leq c_0\mu^{3/4}$ for some small constant $c_0$ independent of viscosity $\mu$, then the solution remains of order $O(\mu^{3/4})$ due to the effect of the Couette flow. Meanwhile, we derive the decay estimates of the solution in the $L^p$ norm. More importantly, the dominant factor extraction method proposed provides a general approach to stability analysis of more complex coupled fluid systems.
	\end{abstract}

	\begin{keywords}
		Micropolar equations; Stability of solution; Green's function method ; Dominant factor extraction method; Couette flow.
	\end{keywords}
	
	\maketitle

	\section{Introduction}
	In this paper, we consider the Cauchy problem of two-dimensional micropolar equations:
	\begin{equation}\label{1.1}
		\begin{cases}
			\partial_t U + U \cdot \nabla U + \nabla P - (\nu + \kappa)\Delta U = 2\kappa\nabla^\perp W, \\
			\partial_t W + U \cdot \nabla W - \mu\Delta W + 4\kappa W = 2\kappa\nabla \times U,\\
			\mathrm{div}\, U = 0, \\
			U(0,x,y)=U_0(x,y), \ \ W(0,x,y)=W_0(x,y),
		\end{cases} 
	\end{equation}
	where $\nabla^{\perp}=(\partial_y, -\partial_x)$, $(x,y)\in\mathbb{R}^2, t>0$. Here, the unknown functions $U=(U^1(x,y,t),U^2(x,y,t))$, $P=P(x,y,t)$, and $W=W(x,y,t)$ represent the velocity, pressure, and microrotation of the fluid, respectively. The parameters $\nu\geq0$, $\kappa>0$ and $\mu\geq0$ are the Newtonian viscosity, the microrotation viscosity and the angular viscosity, respectively.
	
	The micropolar fluid theory is an important extension of the classical Navier–Stokes equations. It introduces microrotation and inertial spin into the velocity field and can be used to describe complex fluids with microstructure, particle rotation, anisotropy, and coexisting viscous dissipation and rotational dissipation, such as colloidal suspensions, polymer solutions, magnetic fluids, biological fluids, and flows in porous media. Conducting stability studies on two-dimensional micropolar equations has both theoretical significance and practical value. 
	On the one hand, it reveals the influence of microrotation on flow stability; on the other hand, it provides theoretical support for microscale flows, biological fluids, and complex engineering flows. For the discussion of the global existence of two-dimensional micropolar equations, one may refer to \cite{7,8,9,10,11,12}. In the case of bounded or periodic domains, under certain parameter constraints, the stability threshold of the two-dimensional micropolar equations  can be achieved at 2/3; for details, see \cite{13,14}.
	
	Since the early experiments conducted by O. Reynolds \cite{O.Reynolds.1883}, the stability of laminar flow at high Reynolds numbers and the instability mechanisms leading to the transition from laminar flow to turbulent flow have always been popular research topics. There is extensive literature about the study of the instability mechanisms in laminar flow turbulence transition \cite{S.Orszag.1980,Schmid.2001,Yaglom.2012}. To quantify the transition mechanism, Trefethen et al.\cite{Trefethen.1993} posed the transition threshold, which can be traced back to Kelvin \cite{Kelvin.1887}. Bedrossian, Germain and Masmoudi \cite{Bedrossian.2019} formalized the threshold mathematically.
	Given a norm $||\cdot||_X$, if there exists a constant $\gamma=\gamma(X)$ such that 
	$$||u_0||_X<C \nu^{\gamma}\Longrightarrow stability,$$
	$$||u_0||_X\gg C \nu^{\gamma}\Longrightarrow instability,$$
	then we  call $\gamma$ the transition threshold in the applied literature.
	
	Regarding the research on the transition threshold problem of fluids, previous studies mainly considered the cases of bounded domains or periodic domains\cite{Duguet.2010,Lundbladh.1994, S.Orszag.1980,Reddy.1998,Yaglom.2012}, and rarely took into account the unbounded domains, especially the whole space \cite{1key,2key,16}.  For the Couette flow stability threshold problem, one can see \cite{Bedrossian.2020,Bedrossian.2017,Bedrossian.2016,Bedrossian.2018}. 
	
	In this paper, we aim to establish a stability threshold for the micropolar equations near Couette flow in the whole space under certain parameter constraints. The general parametric assumptions will be of interest in our subsequent work. The whole-space setting enables us to isolate the intrinsic bulk stability mechanism of micropolar fluids without the interference of boundary effects. It is easy to see that the Couette flow 
	\[
	U_s = (y, 0),\quad W_s = -\frac{1}{2},\quad P_s = c
	\]
	is a stationary solution of (\ref{1.1}). Now we introduce the perturbation $U = u + U_s$,\ $W= \omega + W_s$, $P = p + P_s$, then $(u, \omega, p)$ satisfy
	
	\begin{equation}\label{1.2}
		\begin{cases}
			\partial_t u + y\partial_x u - (\nu + \kappa)\Delta u + \binom{u^2}{0} - 2\kappa\nabla^{\perp} \omega + \nabla p + u \cdot \nabla u = 0, \\
			\partial_t \omega + y\partial_x \omega - \mu\Delta \omega + 4\kappa\omega - 2\kappa\nabla \times u + u \cdot \nabla \omega = 0, \\
			\nabla \cdot u = 0,\\
			u(0,x,y)=u_0(x,y),\ \ \omega(0,x,y)=\omega_0(x,y),
		\end{cases} 
	\end{equation}
	where $u=(u^1(x,y,t),u^2(x,y,t)).$\\
	Now we introduce the vorticity $m=\nabla \times u=\partial_x u^2-\partial_y u^1$ and rewrite the above system as
	\begin{equation}\label{1.3}
		\begin{cases}
			\partial_t m + y\partial_x m - (\nu + \kappa)\Delta m + u \cdot \nabla m = -2\kappa\Delta\omega, \\
			\partial_t \omega + y\partial_x \omega - \mu\Delta \omega + u \cdot \nabla \omega = 2\kappa m - 4\kappa\omega, \\
			u = \nabla^\perp \psi,\ \Delta \psi = m, \nabla \cdot u=0,\\
			m(0,x,y)=m_0(x,y),\ \ \omega(0,x,y)=\omega_0(x,y),
		\end{cases} 
	\end{equation}
	where $\psi$ is the stream function. Define the linear operator
	\[
	A(D) = 
	\begin{pmatrix}
		(\nu + \kappa)\Delta & -2\kappa\Delta \\
		2\kappa & \mu\Delta - 4\kappa
	\end{pmatrix}.
	\]
	
	This allows us to express system (\ref{1.3}) as:
	\begin{equation}\label{1.4}
		\partial_t
		\begin{pmatrix}
			m \\
			\omega
		\end{pmatrix}
		+y\partial_x
		\begin{pmatrix}
			m \\
			\omega
		\end{pmatrix}
		- A(D)
		\begin{pmatrix}
			m \\
			\omega
		\end{pmatrix}
		+ u \cdot \nabla
		\begin{pmatrix}
			m \\
			\omega
		\end{pmatrix}
		= 0,
	\end{equation}
	and
	\begin{equation}\label{001}
		m(0,x,y)=m_0(x,y),\ \ \omega(0,x,y)=\omega_0(x,y).
	\end{equation}
	The main theorem of this paper is as follows:
	\begin{theorem}\label{thm1.1}
		Suppose  that $m_0, \omega_0\in L^1(\mathbb{R}^2)\cap L^{\infty}(\mathbb{R}^2)$ and take $\nu = \kappa =\frac{1}{2} \mu$ which satisfies $0<\mu \ll 1$. There exists constants $\mu_0$  and $c_0, C> 0$ independent  of $\mu$  so that if 
		\begin{equation}
			\|(m_0, \omega_{0})\|_{L^1\cap L^{\infty}}\leq c_0\mu^{\frac{3}{4}}
		\end{equation}
		for some sufficiently small $c_0,0<\mu\leq \mu_0,$ 
		then the solution of the system (\ref{1.4})-(\ref{001}) is global in time and satisfies the following stability estimate 
		\begin{equation}
			\|(m(t,\cdot, \cdot), \omega(t,\cdot,\cdot))\|_{L^\infty L^p}\leq Cc_0(1+t)^{-2(1-\frac{1}{p})}\mu^{\frac{3}{4}},\ \ \ \ \ \forall p \geq \frac{4}{3}.
		\end{equation}
	\end{theorem}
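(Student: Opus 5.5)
With $\nu=\kappa=\frac12\mu$ the operator becomes
\[
A(D)=\begin{pmatrix}\mu\Delta & -\mu\Delta\\ \mu & \mu\Delta-2\mu\end{pmatrix}=\mu\begin{pmatrix}\Delta & -\Delta\\ 1 & \Delta-2\end{pmatrix}.
\]
The whole linear part is therefore $\mu$ times a constant-coefficient matrix, plus the transport term $y\partial_x$. The plan has three steps: construct the Green's matrix $G(t,s;x,y;x',y')$ of the linearized problem $\partial_t V+y\partial_x V-A(D)V=0$, prove pointwise/$L^p$ bounds for it that encode both heat-type decay and the Couette enhanced dissipation, and close the nonlinear problem by a bootstrap on the Duhamel formula.

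\textbf{Step 1: explicit Green's matrix.} Take the Fourier transform in $(x,y)$ and pass to the moving frame $\eta\mapsto\eta+k(t-s)$, which removes $y\partial_x$. Each mode then obeys the ODE system
\[
\partial_t\hat V=\mu\, M(k,\eta(t))\hat V,\qquad M=\begin{pmatrix}-|\xi|^2 & |\xi|^2\\ 1 & -|\xi|^2-2\end{pmatrix},\qquad \xi(t)=(k,\eta+kt).
\]
The coefficients are time-dependent, and $M(t)$ at different times do not commute, so diagonalization does not give the solution directly; this is the coupled variable-coefficient difficulty. I would use the ``dominant factor extraction'': pull out the scalar heat factor and write
\[
\hat V=\exp\Big(-\mu\int_s^t|\xi(\tau)|^2d\tau\Big)\,\hat W .
\]
Then $\hat W$ solves $\partial_t\hat W=\mu N(t)\hat W$ with $N=\begin{pmatrix}0&|\xi|^2\\1&-2\end{pmatrix}$. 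For this reduced system I would prove the bound $|\hat W(t)|\lesssim |\hat W(s)|$, up to a growth that the extracted factor absorbs. The eigenvalues of $N$ are $-1\pm\sqrt{1+|\xi|^2}$. The positive one, $\approx|\xi|$, grows at most like $e^{\mu\int|\xi|}$, and this is dominated by $e^{-\frac12\mu\int|\xi|^2}$ up to a constant. So after extraction
\[
|\hat G(t,s;k,\eta)|\lesssim \exp\Big(-c\mu\int_s^t|\xi(\tau)|^2d\tau\Big)\lesssim \exp\big(-c\mu(t-s)(k^2(t-s)^2+\eta^2)\big).
\]
To make this rigorous I would use an energy estimate for $\hat W$ with the weighted quantity $|\hat W_1|^2+|\xi|^2|\hat W_2|^2$. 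This is the main obstacle: the time derivative of the weight, $2k(\eta+kt)$, must be controlled by the dissipation, and the off-diagonal coupling has to be handled uniformly in $\mu$.

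\textbf{Step 2: physical-space bounds.} Inverting the Fourier transform gives an anisotropic Gaussian bound in the sheared variables. The heat part contributes $\|G(t,s)\|_{L^1\to L^p}\lesssim(\mu(t-s))^{-(1-1/p)}$. The Couette enhancement, the $\mu k^2(t-s)^3$ term, improves the $x$-direction scale to $(\mu(t-s)^3)^{-1/2}$. Together these give $\|G\|_{L^1\to L^p}\lesssim (\mu(t-s)^2)^{-(1-1/p)}\cdot(\ldots)$. I would also record the derivative bound $\|\nabla G\|_{L^1\to L^p}$, because the nonlinearity $u\cdot\nabla V=\nabla\cdot(uV)$ can be written in divergence form and the derivative placed on $G$.

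\textbf{Step 3: bootstrap.} Define
\[
E(T)=\sup_{t\le T,\ p\ge 4/3}(1+t)^{2(1-1/p)}\mu^{-3/4}\,\|(m,\omega)(t)\|_{L^p}.
\]
Write Duhamel as $V(t)=G(t,0)V_0-\int_0^t\nabla G(t,s)\cdot(uV)(s)\,ds$. The linear term gives $\lesssim c_0\mu^{3/4}(1+t)^{-2(1-1/p)}$, using the $L^1\cap L^\infty$ norm of the data to cover small $t$. For the Duhamel term, estimate $u$ from $m$ through Biot--Savart and Hardy--Littlewood--Sobolev, which needs $m\in L^{4/3}\cap L^\infty$ for $u\in L^4\cap L^\infty$; this explains the restriction $p\ge4/3$. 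The time integral is then bounded by $E^2\mu^{3/2}\mu^{-a}(1+t)^{-2(1-1/p)}$, where the power $\mu^{-a}$ with $a\le3/4$ is lost from $\|\nabla G\|$ near $s\approx t$ and from integrating over $s$. This loss is where the exponent $3/4$ comes from, and I expect computing it exactly to be the delicate part. It yields $E\le C c_0+CE^2$, and the standard continuity argument with $c_0$ small closes the bootstrap, giving global existence together with the stated bound.
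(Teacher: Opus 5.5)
\textbf{Verdict: genuine gap.} Your overall route is the paper's: Fourier transform, shear coordinates, extraction of the heat factor, a bound on the remainder, $L^p$ bounds for $\mathbb{G}$, then Duhamel plus bootstrap. Your reduced system is in fact identical to the paper's. With $\hat G=\hat G_I\hat G_J=e^{-P}D\hat G_J$ one has $\hat W=D\hat G_J$, whose first column $(g_{11}+Pg_{21},\,g_{21})$ is exactly $(w,g_{21})$ in \eqref{3.7}. The gap is that the step you call the main obstacle, a uniform bound for this non-autonomous system, is the whole content of the Green's function analysis, and the tool you propose does not work. For $E=|\hat W_1|^2+|\xi|^2|\hat W_2|^2$,
\[
\frac{d}{dt}E=4\mu|\xi|^2\,\mathrm{Re}\big(\overline{\hat W_1}\hat W_2\big)-4\mu|\xi|^2|\hat W_2|^2+2k(\eta+kt)|\hat W_2|^2 .
\]
This fails for three reasons:
\begin{enumerate}[(i)]
\item The coupling terms add instead of cancelling, and no positive-definite weight can make them cancel, because $N$ has the positive eigenvalue $-1+\sqrt{1+|\xi|^2}$.
\item The weight degenerates as $|\xi|\to0$, so $E$ gives no control of $\hat W_2$ at low frequency.
\item The shear term can be as large as $|\xi|^2|\hat W_2|^2$ with no factor $\mu$. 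After full extraction the only damping left is $-4\mu|\xi|^2|\hat W_2|^2$, so the shear term cannot be absorbed by dissipation.
\end{enumerate}
Your eigenvalue count is the frozen-coefficient reasoning that Vinograd's example shows is insufficient. Even heuristically it is off: $e^{\mu\int|\xi|}e^{-\frac12\mu\int|\xi|^2}$ is only bounded by $e^{\mu(t-s)/2}$, not by a constant, and the correct low-frequency rate is $\sqrt{1+|\xi|^2}-1\le\frac12|\xi|^2$.

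The missing idea is the paper's scalar reduction, which never differentiates the shear. Rescale time by $\mu$, so that $\dot{\hat W}_1=P'\hat W_2$ and $\dot{\hat W}_2=\hat W_1-2\hat W_2$ with $P'=|\xi(t)|^2\ge0$. Eliminating $\hat W_1$ gives $\ddot{\hat W}_2+2\dot{\hat W}_2-P'\hat W_2=0$. The substitution $\hat W_2=e^{-t}F$ gives $F''=(1+P')F$, hence the Volterra equation
\[
\hat W_2(t)=\hat W_2^{(0)}(t)+\int_0^t\frac{1-e^{-2(t-s)}}{2}\,P'(s)\,\hat W_2(s)\,ds,\qquad |\hat W_2^{(0)}|\le 1,
\]
whose kernel lies in $[0,\frac12]$. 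Gronwall yields $|\hat W_2|\le e^{P/2}$. Then $\hat W_1=\hat W_1(0)+\int_0^t P'\hat W_2$ gives $|\hat W_1|\lesssim e^{P/2}$. So exactly half of the extracted factor is consumed and $|\hat G|\lesssim e^{-P/2}$ (Lemma \ref{lem3.2}, Proposition \ref{prop:3.3}).

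Two further steps are also missing from your proposal.
\begin{enumerate}[(i)]
\item A bound on $|\hat G|$ alone does not give an anisotropic Gaussian in physical space. Your bootstrap needs $\|\mathbb{G}\|_{L^1}$ (for the linear term at small time) and $\|\nabla\mathbb{G}\|_{L^q}$ with $q<2$, which Hausdorff--Young cannot supply. The paper obtains these from $L^1$ bounds on $\partial_\xi^3\hat{\mathbb{G}}_2$ and $\partial_\eta^3\hat{\mathbb{G}}_2$, derived by differentiating the frequency-side equation and using Duhamel; they give the algebraic decay of Lemma \ref{lem:3.5}.
\item The exponent $\frac34$ is never derived. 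In the paper it comes from the rescaled-time bound $\frac1\mu\|\nabla\mathbb{G}(t-s)\|_{L^q}\lesssim\mu^{-1/q}(t-s)^{-\frac12-2(1-\frac1q)}$ with $q\ge\frac43$, the level forced by $u\in L^4$ from $m\in L^{4/3}$. This is combined with the split $t\le 2\mu^{1/2}$ versus $t>2\mu^{1/2}$ and a separate treatment of the window $[t-\mu^{1/2},t]$ near $s=t$.
\end{enumerate}
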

	\begin{remark}
		The stability threshold result obtained in Lemma \ref{thm1.1} is not optimal. The optimal stability threshold will be our focus in subsequent work.
	\end{remark}
	The proof of the main theorem in this paper relies primarily on the Green's function of the linear part of system (\ref{1.4}) and the corresponding estimates. In the literature, only a few works have discussed the stability of solutions to fluid mechanics equations using the Green's function method. For instance, the stability of solutions to the two-dimensional Navier–Stokes equations is studied in \cite{2key} via this method, and the stability of solutions to the two-dimensional Boussinesq equations is investigated in \cite{16}. A notable feature of the Green's functions used in these two works is that the linear parts of the corresponding systems are either scalar equations or can be completely decoupled. For fluid equations involving Couette flow, the corresponding equation in the frequency domain is a first‑order partial differential equation with variable coefficients, and its solution in the scalar case can be expressed analytically. In contrast, the micropolar equations considered in the present paper present a fundamentally different difficulty: their linear part in the frequency domain is a system of equations with variable coefficients.
	
	As is well known, even for an ordinary differential equation, a variable-coefficient scalar equation only admits an explicit analytical solution. However, variable-coefficient partial differential systems generally do not allow such explicit expressions; their solutions can only be represented via series expansions. Once the solution is available only in series form, pointwise estimates of the Green’s function—which are crucial for proving stability via the Green’s function method—become unattainable. Furthermore, even if all eigenvalues of a variable-coefficient system have negative real parts, one cannot conclude that its solutions are bounded. Classical counterexamples illustrating this fact can be found in \cite{Vinograd1957, Josic2002}.
	
	Undoubtedly, when studying fluid mechanics equations using the Green’s function method—such as the compressible Navier–Stokes equations, the micropolar equations discussed in this paper, and the multi-physics MHD equations—the corresponding linear parts all take the form of coupled systems with variable coefficients. Given that an explicit analytical expression for the Green’s function cannot be obtained in either the space–time or frequency domain, how to derive the required estimates of the Green’s function becomes a generally challenging problem. An important objective of this paper is to explore a universal approach to address this need.
	
	One of the main methods employed in this paper is the dominant factor extraction method (hereinafter referred to as DFEM). Specifically, we first identify and extract the dominant factor, deriving its corresponding analytical expression. The remaining secondary factor then satisfies a more complex system of equations. By exploiting the structure of the equations, we prove that the secondary factor has a corrective effect on the dominant factor but does not affect the estimates of the latter, thereby yielding refined estimates of the Green’s function. It can be seen that DFEM is a universal method for coupled systems, which is fundamentally important for estimating the Green’s function in the context of variable-coefficient systems.
	
	The remainder of this paper is organized as follows. In Section \ref{s:pre}, we derive the equations satisfied by the Green’s function of the linearized system and present some technical lemmas. In Section \ref{s:dfem}, we introduce the dominant factor extraction method and analyze the Green’s function of the linearized system. In Section \ref{s:estimateG}, we provide the estimates of the Green’s function. In Section \ref{s:NonlinearG}, we establish the nonlinear stability and the stability  of the solution.
	
	Throughout this paper, C denotes a generic constant which only depends on the system that may change from line to line.
	\section{Preliminaries}\label{s:pre}
	In this section, we first derive the equations satisfied by the Green's function, and then list some basic technical lemmas which will be needed in the subsequent parts of the paper.\par
	Taking $\nu = \kappa =\frac{1}{2} \mu$ which satisfies $0<\mu \ll 1$, and performing the time scaling transformation $t \to \dfrac{t}{\mu}$ and  defing $A=\dfrac1\mu$, then the Green's function of the linearized system of (\ref{1.4}) is the solution for the following system:
	\begin{equation}\label{3.00002}
		\left\{
		\begin{aligned}
			&\partial_t \mathbb{G} + A y \partial_x \mathbb{G} - \overline{B}(D)\mathbb{G}=0,\\
			&\mathbb{G}(x,y,0;x',y')=\delta(x-x',y-y'),
		\end{aligned}
		\right.
	\end{equation}
	where
	\[
	\overline{B}(\xi,\eta) = \begin{pmatrix} -(\xi^2+\eta^2) & (\xi^2+\eta^2) \\ 1 & -(\xi^2+\eta^2)-2 \end{pmatrix}.
	\]
	By taking the Fourier transform, let \( \hat{\mathbb{G}} = \hat{\mathbb{G}}_1 \hat{\mathbb{G}}_2 \), where \( \hat{\mathbb{G}}_1 \) and \( \hat{\mathbb{G}}_2 \) satisfy
	\begin{equation}\label{111}
		\begin{cases}
			\partial_t \hat{\mathbb{G}}_1 - A \xi \partial_\eta \hat{\mathbb{G}}_1 = 0, \\
			\hat{\mathbb{G}}_1\big|_{t=0} = \exp\bigl(-i(x'\xi + y'\eta)\bigr) I_{2\times2},
		\end{cases}
	\end{equation}
	and
	\begin{equation}\label{3.2}
		\begin{cases}
			\partial_t \hat{\mathbb{G}}_2 - A \xi \partial_\eta \hat{\mathbb{G}}_2 - \overline{B}(\xi,\eta) \hat{\mathbb{G}}_2 = 0, \\
			\hat{\mathbb{G}}_2\big|_{t=0} = I_{2\times2},
		\end{cases}
	\end{equation}
	respectively.
	From (\ref{111}), we can easily check that the solution of (\ref{111}) is as follows
	\begin{equation}\label{555}
		\hat{\mathbb{G}}_1=\exp\bigl(-i(x'\xi + y'(\eta+At\xi))\bigr) I_{2\times2}.
	\end{equation}
	Then taking coordinate transformation $ \tilde{\eta} = \eta + A t \xi
	$ to (\ref{3.2}), we obtain the new system satisfies
	\begin{equation}\label{3.3}
		\begin{cases}
			\partial_t \widehat{G} - B(\xi,\tilde{\eta},t)\widehat{G}=0,\\
			\hat{G}\big|_{t=0} = I_{2\times 2},
		\end{cases}
	\end{equation}
	where
	\[
	B(\xi,\tilde{\eta})=\begin{pmatrix} -(\xi^2+(\tilde{\eta}-A t\xi)^2) & (\xi^2+(\tilde{\eta}-A t\xi)^2) \\ 1 & -(\xi^2+(\tilde{\eta}-A t\xi)^2)-2 \end{pmatrix}.
	\]
	From (\ref{3.2}) and (\ref{3.3}), we know that 
	$$\hat{\mathbb{G}}_2(\xi, \eta, t)=\widehat{G}(\xi, \tilde{\eta}, t).$$
	The system (\ref{3.3}) is a variable-coefficient ordinary differential equations with respect to time $t$ which has parameters $(\xi, \tilde{\eta})$. We will discuss its explicit solution in Section 3, and present the corresponding estimates in Section 4.\par
	The following lemma is about the Young's inequality for the kernel form (see \cite{Bollob.1993}).
	\begin{lemma}\label{lem2.1}(Young's inequality )  Let $1 \leq p, q, r \leq \infty$ and $1+\frac{1}{r}=\frac{1}{q}+\frac{1}{p}.$ If the kernel $K(z,z')$ is a measurable function on $\mathbb{R}^d\times\mathbb{R}^d$ and satisfies
		$$
		\left\|K(\cdot,z')\right\|_{L^q}\leq A,\quad	\left\|K(z,\cdot)\right\|_{L^q}\leq B,
		$$
		we define  integral operator
		$$
		Tf(z)=\int_{\mathbb{R}^d}K(z,z')f(z')dz',
		$$
		then it holds that
		$$
		\left\|	Tf(z)\right\|_{L^r}\leq C	\left\|	f\right\|_{L^p},
		$$
		for	$f(z)\in L^{p}(\mathbb{R}^d),$	where $C=\max\left\lbrace A,B\right\rbrace$.
		Moreover, we have a much finer estimate:
		$$
		\left\|	Tf(z)\right\|_{L^r}\leq A^{\frac{q}{r}}	B^{q-\frac{q}{p}}\left\|f\right\|_{L^p}.
		$$
	\end{lemma}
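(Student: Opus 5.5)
The plan is the classical Schur-test argument: a three-factor Hölder inequality in the $z'$ variable, followed by Fubini in $z$. The cruder bound with constant $C=\max\{A,B\}$ will then follow from the finer one. Here $A$ controls the $L^q$ norm of $K$ in the first variable $z$ (uniformly in $z'$), and $B$ controls it in the second variable $z'$ (uniformly in $z$).

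First I treat the generic case $1<p,q,r<\infty$. I introduce auxiliary exponents $s_1,s_2$ by
\[
\frac1{s_1}=\frac1q-\frac1r=1-\frac1p,\qquad \frac1{s_2}=\frac1p-\frac1r=1-\frac1q .
\]
The hypothesis $1+\frac1r=\frac1p+\frac1q$ guarantees $\frac1r+\frac1{s_1}+\frac1{s_2}=1$. For fixed $z$ I factor the integrand as
\[
|K(z,z')f(z')|=\Bigl(|K|^{\frac qr}|f|^{\frac pr}\Bigr)\Bigl(|K|^{\frac q{s_1}}\Bigr)\Bigl(|f|^{\frac p{s_2}}\Bigr).
\]
The exponents are consistent because $\frac qr+\frac q{s_1}=q\bigl(\frac1r+\frac1q-\frac1r\bigr)=1$, and similarly for the powers of $f$. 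Hölder's inequality in $z'$ with exponents $(r,s_1,s_2)$ then gives
\[
|Tf(z)|\le \Bigl(\int |K(z,z')|^q|f(z')|^p\,dz'\Bigr)^{\frac1r} B^{\frac q{s_1}}\,\|f\|_{L^p}^{\frac p{s_2}},
\]
where the bound $\int|K(z,z')|^q\,dz'\le B^q$ was used on the middle factor.

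Next I raise this to the power $r$, integrate in $z$, and use Fubini together with $\int |K(z,z')|^q\,dz\le A^q$. This yields
\[
\|Tf\|_{L^r}^r\le A^q B^{\frac{rq}{s_1}}\|f\|_{L^p}^{p+\frac{rp}{s_2}}.
\]
Since $\frac{rp}{s_2}=rp\bigl(\frac1p-\frac1r\bigr)=r-p$, the power of $\|f\|_{L^p}$ is exactly $r$. Taking $r$-th roots gives
\[
\|Tf\|_{L^r}\le A^{q/r}B^{q(1-1/p)}\|f\|_{L^p},
\]
which is the finer estimate.

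Because $\frac qr+q\bigl(1-\frac1p\bigr)=1$, the factor $A^{q/r}B^{q-q/p}$ is a convex-type combination $A^\theta B^{1-\theta}$ with $\theta=q/r\in[0,1]$. Hence it is bounded by $\max\{A,B\}$, which gives the first estimate.

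The step that needs the most care is the endpoint cases, where one of $s_1,s_2,r$ is infinite.
\begin{itemize}
\item If $r=\infty$, then $\frac1p+\frac1q=1$, and plain Hölder in $z'$ gives $|Tf(z)|\le B\|f\|_{L^p}$, matching $A^0B^1$.
\item If $p=1$, then $q=r$. Minkowski's integral inequality gives $\|Tf\|_{L^r}\le \sup_{z'}\|K(\cdot,z')\|_{L^q}\|f\|_{L^1}\le A\|f\|_{L^1}$, matching $A^1B^0$.
\item If $q=1$, then $p=r$. In this case the argument above goes through with $s_2=\infty$, with the third Hölder factor simply omitted.
\end{itemize}
No other obstacles are expected: all other steps are direct consequences of Hölder's inequality and Tonelli–Fubini, which apply since $K$ is measurable and nonnegative integrands may be used.
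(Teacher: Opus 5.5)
Your proof is correct and complete. The paper gives no proof of this lemma; it only cites a standard reference. So there is no competing route to compare: what you wrote is the standard Schur-type proof of the kernel form of Young's inequality.

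The key points all check out.
\begin{itemize}
\item The three-factor H\"older split with exponents $(r,s_1,s_2)$, where $\frac1{s_1}=1-\frac1p$ and $\frac1{s_2}=1-\frac1q$, followed by Tonelli in $z$, produces exactly $A^{q/r}B^{q(1-1/p)}$.
\item $B$, the bound in $z'$ for fixed $z$, enters through the H\"older step. $A$, the bound in $z$ for fixed $z'$, enters through the Fubini step. This is the assignment the statement requires.
\item Since $q\le r$, the first bound $\max\{A,B\}$ follows from $q/r+q(1-1/p)=1$.
\item Your endpoint list $r=\infty$, $p=1$, $q=1$ is exhaustive. Indeed, $p=\infty$ forces $q=1$ and $r=\infty$; $q=\infty$ forces $p=1$ and $r=\infty$; and $r=1$ forces $p=q=1$.
\end{itemize}

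One cosmetic remark: when $q=\infty$ (so $p=1$, $r=\infty$), the exponents $q/r$ and $q-q/p$ in the finer bound are formally undefined. Your $r=\infty$ case handles this correctly by reading the constant as $A^0B^1$.
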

	The following is the Hardy-Littlewood-Sobolev inequality.
	\begin{lemma}\label{lem2.3}
		Let $0<s<2,1<p<q<\infty,\dfrac{1}{q}+\dfrac{s}{2}=\dfrac{1}{p},$ then
		$$||\Lambda^{-s}f||_{L^q}\leq C||f||_{L^p}.$$
	\end{lemma}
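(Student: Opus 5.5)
We will prove the Hardy--Littlewood--Sobolev inequality of Lemma \ref{lem2.3} in physical space, using Hedberg's pointwise argument, which reduces it to the $L^p$-boundedness of the Hardy--Littlewood maximal function $M$. We may assume $f$ is a Schwartz function and extend by density at the end.

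\textbf{Step 1: kernel representation.} For $0<s<2$ in dimension two we use the Riesz potential formula
\[
\Lambda^{-s}f(z)=c_s\int_{\mathbb{R}^2}\frac{f(z')}{|z-z'|^{2-s}}\,dz',
\]
which follows from the fact that the Fourier transform of the locally integrable, homogeneous distribution $|\xi|^{-s}$ is $c_s|z|^{-(2-s)}$. This in turn comes from a homogeneity and rotation-invariance argument, or from the Gaussian subordination identity $|\xi|^{-s}=\Gamma(s/2)^{-1}\int_0^\infty t^{s/2-1}e^{-t|\xi|^2}\,dt$. It therefore suffices to bound $I_sf:=|\cdot|^{-(2-s)}*|f|$ in $L^q$.

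\textbf{Step 2: Hedberg's pointwise bound.} Fix $z$ and a radius $R>0$, and split the integral into $|z-z'|<R$ and $|z-z'|\ge R$.

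For the near part, decompose into dyadic annuli $2^{-k-1}R\le|z-z'|<2^{-k}R$. On each annulus the kernel is at most $(2^{-k-1}R)^{s-2}$ and the annulus has measure at most $C(2^{-k}R)^2$, so its contribution is at most $C(2^{-k}R)^{s}Mf(z)$. Summing the geometric series (it converges because $s>0$) gives a bound $CR^{s}Mf(z)$.

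For the far part, apply H\"older's inequality:
\[
\int_{|z-z'|\ge R}\frac{|f(z')|}{|z-z'|^{2-s}}\,dz'\le\|f\|_{L^p}\Big(\int_{|w|\ge R}|w|^{-(2-s)p'}\,dw\Big)^{1/p'}.
\]
The last integral is finite exactly when $(2-s)p'>2$, which is equivalent to $\frac1p-\frac s2>0$, i.e.\ $q<\infty$. It then evaluates to $CR^{s-2/p}=CR^{-2/q}$.

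Choosing $R=(\|f\|_{L^p}/Mf(z))^{p/2}$ balances the two terms and gives
\[
I_sf(z)\le C\,(Mf(z))^{p/q}\,\|f\|_{L^p}^{1-p/q}.
\]

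\textbf{Step 3: conclusion.} Raising the pointwise bound to the power $q$ and integrating,
\[
\|I_sf\|_{L^q}\le C\|Mf\|_{L^p}^{p/q}\|f\|_{L^p}^{1-p/q}\le C\|f\|_{L^p}.
\]
The final step uses the Hardy--Littlewood maximal theorem, which is valid precisely because $p>1$.

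The main obstacle is the endpoint restrictions. The maximal theorem fails at $p=1$, and that is where the hypothesis $1<p$ enters. The tail integral diverges when $q=\infty$, which is why $q<\infty$ is assumed. Apart from these, the argument is routine; the only other point needing care is justifying the constant $c_s$ in the kernel identity of Step 1.
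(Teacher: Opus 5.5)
Your proof is correct, but it follows a different route from the one the paper relies on. The paper gives no argument of its own; it cites Stein's fractional integration theorem (Chapter V, \S1, Theorem 1).

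Stein also splits the kernel at a radius $\mu$, but he chooses $\mu$ in terms of the \emph{level} $\lambda$ rather than the point $z$:
\begin{itemize}
\item The local piece $K_1=|\cdot|^{s-2}\chi_{\{|\cdot|\le\mu\}}$ is estimated in norm by Young's inequality, $\|f*K_1\|_{L^p}\le C\mu^{s}\|f\|_{L^p}$.
\item The tail is estimated by the same H\"older bound you use, $\|f*K_\infty\|_{L^\infty}\le C\mu^{-2/q}\|f\|_{L^p}$.
\item Choosing $\mu$ so that the tail is at most $\lambda/2$, and applying Chebyshev to the local piece, gives the weak-type $(p,q)$ bound.
\item The off-diagonal Marcinkiewicz interpolation theorem then upgrades this to strong type.
\end{itemize}

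You instead control the local piece pointwise by $Mf(z)$ and optimize $R$ point by point. This replaces the interpolation step with the Hardy--Littlewood maximal theorem. Each approach has its own advantage:
\begin{itemize}
\item Your (Hedberg's) argument gives the strong bound in one stroke. It also produces the reusable pointwise inequality $I_sf\le C(Mf)^{p/q}\|f\|_{L^p}^{1-p/q}$, which, combined with the weak $(1,1)$ bound for $M$, also yields the endpoint at $p=1$.
\item Stein's argument needs no maximal function, only Young's inequality and interpolation, and it delivers the weak-type $(1,\tfrac{2}{2-s})$ endpoint as part of the same proof.
\end{itemize}

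Two bookkeeping remarks. The choice of $R$ needs $0<Mf(z)<\infty$; this holds everywhere for nonzero Schwartz $f$, and almost everywhere for $f\in L^p$. Also, your Steps 2 and 3 never use smoothness of $f$, so they apply directly to any $f\in L^p$. Density is needed only to identify $\Lambda^{-s}$ with the Riesz potential.
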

	\begin{proof}
		See \cite{Stein.1970}  [ p.119, Theorem 1].
	\end{proof}	
	\begin{lemma}\label{lem:3.1} For any $\alpha>
		0$, there exists a constant $C_\alpha>0$ such that 
		\begin{equation*}
			\begin{array}{rl}
				\int^t_0|\eta+As\xi|^\alpha ds\geq C_\alpha(|\eta|^{\alpha}+(At)^{\alpha}|\xi|^{\alpha})t.
			\end{array}
		\end{equation*}
	\end{lemma}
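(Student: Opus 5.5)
The plan is a scaling argument that reduces the inequality to a one-parameter family of affine functions on a compact interval, followed by a compactness argument. The cases $\xi=0$ or $t=0$ are handled first. If $t=0$ both sides vanish. If $\xi=0$ the left side equals $|\eta|^\alpha t$, so the claim holds with any $C_\alpha\le 1$. From now on we assume $t>0$ and $\xi\neq 0$.

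We substitute $s=t\tau$ with $\tau\in[0,1]$ and set $a=\eta$, $b=At\xi\neq0$. The left side becomes
\[
t\int_0^1|a+b\tau|^\alpha\,d\tau ,
\]
so it suffices to prove
\[
\int_0^1|a+b\tau|^\alpha d\tau\ge C_\alpha(|a|^\alpha+|b|^\alpha)
\]
for all real $a,b$ that are not both zero. Both sides are homogeneous of degree $\alpha$ in $(a,b)$. We may therefore normalize $\max(|a|,|b|)=1$, under which the right side lies between $C_\alpha$ and $2C_\alpha$.

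On the compact set $K=\{(a,b):\max(|a|,|b|)=1\}$ we consider the function
\[
F(a,b)=\int_0^1|a+b\tau|^\alpha d\tau .
\]
$F$ is continuous, by dominated convergence, since the integrand is bounded by $2^\alpha$ on $K$. $F$ is also strictly positive on $K$: the affine function $\tau\mapsto a+b\tau$ is not identically zero when $(a,b)\neq 0$, so it vanishes at most at one point, and the integrand is positive almost everywhere. Hence
\[
m_\alpha:=\min_K F>0 ,
\]
and $C_\alpha=m_\alpha/2$ works. This gives
\[
\int_0^t|\eta+As\xi|^\alpha ds\ge \tfrac{m_\alpha}{2}\,(|\eta|^\alpha+(At)^\alpha|\xi|^\alpha)\,t .
\]

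The only delicate point is that the constant must not depend on $A$, $t$, $\xi$ or $\eta$. The homogeneity reduction takes care of this, since all dependence enters through $(a,b)$. If an explicit constant is preferred, one can argue directly. On whichever half of $[0,1]$ lies farther from the zero $\tau_0=-a/b$, one has $|a+b\tau|\ge |b|/4$ on a set of measure $1/4$. This gives $F\ge 4^{-1-\alpha}|b|^\alpha$. Separately, if $|b|\le|a|/2$ then $|a+b\tau|\ge|a|/2$ on all of $[0,1]$, giving $F\ge 2^{-\alpha}|a|^\alpha$. In the complementary case $|a|<2|b|$, the bound $|a|^\alpha\le 2^\alpha|b|^\alpha$ lets the first estimate control $|a|^\alpha$ as well. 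Combining the two cases yields an explicit $C_\alpha$.
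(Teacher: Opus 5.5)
The paper states this lemma without proof and only uses it, with $\alpha=2$, in Proposition~\ref{prop:3.3}. So there is no argument in the paper to compare yours with, and your proposal supplies a correct and complete proof.

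The rescaling $s=t\tau$ with $a=\eta$, $b=At\xi$ is the right reduction. It implicitly uses $A>0$ and $t\ge 0$, which hold here since $A=1/\mu$, so that $|b|^\alpha=(At)^\alpha|\xi|^\alpha$. Homogeneity, together with continuity and strict positivity of $F$ on the compact set $\{\max(|a|,|b|)=1\}$, produces a constant independent of $A,t,\xi,\eta$, which is the whole content of the lemma.

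Your explicit variant is also correct. The half-interval argument gives $F\ge 4^{-1-\alpha}|b|^\alpha$; in fact $|\tau-\tau_0|\ge 1/4$ holds on a subset of $[0,1]$ of measure at least $1/2$. Combining your two cases yields, for instance, $C_\alpha=4^{-1-\alpha}/(1+2^\alpha)$.

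The compactness route is the shortest and covers every $\alpha>0$ uniformly. The explicit route shows how the constant degenerates as $\alpha$ grows. For the only case the paper actually needs, $\alpha=2$, you can compute directly:
\[
\int_0^1(a+b\tau)^2\,d\tau=a^2+ab+\tfrac13 b^2 .
\]
This is a positive definite quadratic form, and its smallest eigenvalue $(4-\sqrt{13})/6$ is the sharp constant.
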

	The following is the Gronwall's inequality.
	\begin{lemma}\cite{amann1990ode}  \label{lem:001}(Gronwall's inequality)
		Let $J$ be an interval in $R$, $t_0 \in J$, $a(t) = a_0(|t-t_0|)$, where $a_0 \in C(\mathbb{R}_+,\mathbb{R}_+)$ is a monotone increasing function, and assume that
		\[
		u(t) \leq a(t) + \left|\int_{t_0}^{t} \beta(s)u(s)\,ds\right|, \quad \forall t \in J.
		\]
		Then we obtain the estimate
		\[
		u(t) \leq a(t) e^{\left|\int_{t_0}^{t} \beta(s)\,ds\right|}, \quad \forall t \in J.
		\]
	\end{lemma}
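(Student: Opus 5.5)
We will prove the estimate at an arbitrary fixed point $t_1 \in J$, treating the cases $t_1 \geq t_0$ and $t_1 < t_0$ separately. Throughout we use the standing assumptions implicit in the statement: $\beta \geq 0$, and $u \geq 0$ with $u$ and $\beta$ continuous (it suffices that $\beta u$ is locally integrable). The key device is to freeze the monotone forcing term. Since $a_0$ is increasing, for every $t$ between $t_0$ and $t_1$ we have $|t-t_0| \leq |t_1-t_0|$ and hence $a(t) \leq a(t_1)$. Consequently, on this interval the hypothesis becomes
\[
u(t) \leq a(t_1) + \left|\int_{t_0}^{t}\beta(s)u(s)\,ds\right|,
\]
which is an inequality with a \emph{constant} forcing term.

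\emph{Case $t_1 \geq t_0$.} For $t \in [t_0,t_1]$ define $v(t) = a(t_1) + \int_{t_0}^{t}\beta(s)u(s)\,ds$. Since $\beta u \geq 0$, the absolute value is harmless, and $v$ is absolutely continuous with $v'(t) = \beta(t)u(t) \leq \beta(t)v(t)$ almost everywhere. Multiplying by the integrating factor $e^{-\int_{t_0}^{t}\beta}$ gives
\[
\frac{d}{dt}\Bigl(v(t)\,e^{-\int_{t_0}^{t}\beta(s)\,ds}\Bigr) \leq 0 .
\]
Integrating from $t_0$ to $t$ yields $v(t) \leq v(t_0)e^{\int_{t_0}^{t}\beta} = a(t_1)e^{\int_{t_0}^{t}\beta}$. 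Evaluating at $t=t_1$ and using $u \leq v$, we obtain $u(t_1) \leq a(t_1)e^{|\int_{t_0}^{t_1}\beta|}$.

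\emph{Case $t_1 < t_0$.} We reduce to the previous case by reflection. Set $\tilde u(\tau) = u(2t_0-\tau)$, $\tilde\beta(\tau) = \beta(2t_0-\tau)$ and $\tilde a(\tau) = a(2t_0-\tau)$. The change of variables $s \mapsto 2t_0-s$ turns $\left|\int_{t_0}^{t}\beta u\right|$ into $\int_{t_0}^{2t_0-t}\tilde\beta\tilde u$. Moreover $\tilde a(\tau) = a_0(|\tau-t_0|)$ has the same form as $a$. The first case therefore applies to $\tilde u$ and gives the claim at $t_1$.

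Since $t_1$ was arbitrary, the lemma follows. The only delicate point is justifying the differential inequality for $v$ under minimal regularity. If $u$ is merely measurable and locally bounded, we would instead iterate the integral inequality (Picard-type iteration). Bounding the $n$-th remainder by $a(t_1)\bigl(\int\beta\bigr)^n/n!$ and summing the resulting series reproduces the exponential bound, which avoids differentiation altogether.
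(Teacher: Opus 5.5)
Your proof is correct. Note that the paper gives no proof of this lemma; it only cites Amann's book. There the estimate comes out of the general Gronwall inequality for an arbitrary nonnegative forcing $\alpha$,
\[
u(t)\le \alpha(t)+\Bigl|\int_{t_0}^t\alpha(s)\beta(s)e^{|\int_s^t\beta|}\,ds\Bigr|,
\]
which is proved by the same integrating-factor computation you use. Monotonicity is invoked only afterwards: one bounds $\alpha(s)\le\alpha(t)$ inside the integral and uses $\int_{t_0}^t\beta(s)e^{\int_s^t\beta}\,ds=e^{\int_{t_0}^t\beta}-1$.

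You instead freeze the forcing at $a(t_1)$ before integrating, which reduces everything to the constant-forcing case. Your route is a little shorter; the other order yields the variable-forcing inequality as a by-product. Both are the standard argument.

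You were right to make explicit the hypotheses $u\ge0$ and $\beta\ge0$, which the paper's statement omits. They are genuinely needed. For example, with $\beta\equiv1$ and $a\equiv1$, a very negative $u$ on an initial interval makes $\bigl|\int_{t_0}^t\beta u\bigr|$ large, and this permits a large positive value of $u$ immediately afterwards. These hypotheses do hold where the paper uses the lemma: in the proof of Lemma \ref{lem3.2}, with $u=|g_{21}|$ or $|g_{22}|$ and $\beta=\tfrac12P'\ge0$.

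There is one small slip in your closing remark. In the Picard iteration, the $n$-th remainder is bounded by $\bigl(\sup_{[t_0,t_1]}u\bigr)\bigl(\int_{t_0}^{t_1}\beta\bigr)^n/n!$, not by $a(t_1)\bigl(\int\beta\bigr)^n/n!$; the latter is the $n$-th term of the series. In any case the remark is unnecessary. Your absolute-continuity argument already covers measurable $u$ with $\beta u\in L^1_{\mathrm{loc}}$.
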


	Now we give the following abstract bootstrap argument.
	\begin{lemma}\label{lem2.2}\cite{8_pa}
		Let $T>0. $ Assume that two statements C(t) and H(t) with $t\in [0,T]$ satisfy the following conditions\\
		(a) If H(t) holds for some $t\in [0,T]$, then C(t) holds for the same t;\\
		(b) If C(t) holds for some $t_{0}\in [0,T]$, then H(t) holds for t in a neighborhood of $t_{0};$\\
		(c) If C(t) holds for $t_{m}\in [0,T]$ and $t_{m}\rightarrow t,$ then C(t) holds;\\
		(d) C(t) holds for at least one $t_{1}\in [0,T].$\\
		Then C(t) holds on $[0,T].$
	\end{lemma}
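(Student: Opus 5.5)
The plan is the standard continuity (connectedness) argument: show that the set of times at which $C(t)$ holds is a nonempty, relatively open and closed subset of the connected interval $[0,T]$, and hence is all of $[0,T]$. Concretely, set
\[
S=\{t\in[0,T] : C(t)\ \text{holds}\}.
\]
Hypothesis (d) of Lemma \ref{lem2.2} gives $t_1\in S$, so $S\neq\emptyset$.

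\textbf{Closedness.} Let $t_m\in S$ with $t_m\to t$. Since $[0,T]$ is closed, $t\in[0,T]$. By hypothesis (c), $C(t)$ holds, so $t\in S$. Thus $S$ is closed in $[0,T]$.

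\textbf{Openness.} Let $t_0\in S$. By hypothesis (b) there is a relatively open neighborhood $U=(t_0-\varepsilon,t_0+\varepsilon)\cap[0,T]$ on which $H(t)$ holds. By hypothesis (a), $C(t)$ then holds for every $t\in U$, so $U\subset S$. Hence $S$ is relatively open in $[0,T]$.

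\textbf{Conclusion.} Since $[0,T]$ is connected, its only nonempty subset that is both relatively open and closed is $[0,T]$ itself, so $S=[0,T]$. To avoid invoking connectedness abstractly, one can instead argue via a supremum. Let $t^*=\sup\{t\ge t_1 : [t_1,t]\subset S\}$. Closedness gives $t^*\in S$. Openness shows that $t^*=T$, since otherwise $S$ would contain a neighborhood of $t^*$, contradicting maximality. The same argument applied with an infimum to the left of $t_1$ shows that the interval extends down to $0$.

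There is no real obstacle here. The only point requiring care is the direction of the implications: (b) supplies $H$ near a point where $C$ holds, and (a) converts $H$ back into $C$, which is exactly what openness needs. One should also read ``neighborhood'' relative to $[0,T]$ at the endpoints $0$ and $T$.
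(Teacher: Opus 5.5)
Your proof is correct, and it is the standard connectedness argument behind the abstract bootstrap principle that the paper takes from the cited reference without reproducing a proof. The set where $C(t)$ holds is nonempty by (d), relatively open in $[0,T]$ by applying (b) and then (a), and closed by (c), so it equals $[0,T]$; your supremum variant is a valid elementary replacement for the appeal to connectedness.
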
 
	\section{Dominant Factor Extraction Method}\label{s:dfem}
	For linear ordinary differential equations with variable coefficients, the investigation of their solutions poses far greater challenges than those with constant coefficients. On the one hand, it is generally impossible to construct explicit solutions via algebraic approaches such as eigenvalue decomposition and matrix exponentiation; instead, the proof of the solutions existence solely relies on functional analysis and iterative theories, leading to substantial analytical difficulties. On the other hand, the instantaneous eigenvalues of the coefficient matrix only reflect local properties, which are insufficient to characterize the global long-term evolutionary behavior of solutions. Even if the real parts of all instantaneous eigenvalues are constantly negative, solutions may still exhibit unbounded exponential growth \cite{Vinograd1957,Josic2002}. Furthermore, the lack of universal methods for constructing Lyapunov functions and straightforward integral estimation techniques makes the simultaneous proof of the solutions existence and global boundedness a core challenge in the qualitative analysis of differential systems with variable coefficients.\par
	In the following, we consider a two-dimensional linear system with time-varying coefficients
	\[
	\dot{\mathbf{x}}(t) = A(t)\mathbf{x}(t),
	\]
	where the coefficient matrix is given by
	\[
	A(t)=
	\begin{pmatrix}
		-1 + \dfrac{3}{2}\cos^2t & 1+\dfrac{3}{2}\cos t\sin t \\[4pt]
		-1+\dfrac{3}{2}\cos t\sin t & -1+\dfrac{3}{2}\sin^2t
	\end{pmatrix}.
	\]
	It can be verified that the instantaneous eigenvalues of $A(t)$ are always
	\[
	\lambda_1 = \lambda_2 = -1,
	\]
	whose real parts are strictly negative for all $t\in\mathbb{R}$.
	However, the system possesses an unbounded solution
	\[
	\mathbf{x}(t)=e^{\frac{t}{2}}
	\begin{pmatrix}
		\cos t \\
		-\sin t
	\end{pmatrix},
	\]
	which grows exponentially as $t\to+\infty$.
	This classic counterexample \cite{Vinograd1957} shows that for linear systems with variable coefficients,
	the condition that all instantaneous eigenvalues have negative real parts
	does not guarantee the stability of the zero solution.
	
	\bibliographystyle{plain}
	
	Returning to the system (\ref{3.3}), as mentioned earlier, it is difficult to obtain its explicit solution, and it is impossible to derive the solution expression solely from its characteristic roots. This paper attempts to exploit the structure of the equation to separate its dominant factors, and to prove that the behavior of the solution is determined by these dominant factors. The method we used is called the ``\textbf{dominant factor extraction method}'' (\textbf{DFEM}).\par
	Define
	\[
	B(\xi,\tilde{\eta})=B_I+B_J=\begin{pmatrix} -1 & 1 \\ 0 & -1 \end{pmatrix}(\xi^2+(\tilde{\eta}-A t\xi)^2) + \begin{pmatrix} 0 & 0 \\ 1 & -2 \end{pmatrix}.
	\]
	We introduce $\hat{G} \eqqcolon \hat{G}_I \hat{G}_J,$
	and $\hat{G}_I $ solves the following equations:
	\begin{equation}\label{3.4}
		\begin{cases}
			\partial_t \hat{G}_I - B_I \hat{G}_I = 0,\\
			\hat{G}_I\big|_{t=0} = I_{2\times 2},
		\end{cases}
	\end{equation}
	where
	\[
	B_I = \begin{pmatrix} -1 & 1\\ 0 & -1 \end{pmatrix}
	\bigl(\xi^2 + (\tilde{\eta} - At\xi)^2\bigr).
	\]
	We refer to $\hat{G}_I$ as the \textbf{dominant factor} of the Green's function. For $\hat{G}_I$, we have the following lemma.
	\begin{lemma}\label{lem3.1}
		The solution of the equation (\ref{3.4}) is
		\[
		\hat{G}_I
		=
		\exp\!\left(-\int_0^t \bigl(\xi^2 + (\tilde{\eta} - As\xi)^2\bigr)ds\right)D,
		\]
		where
		\[
		D=
		\begin{pmatrix}
			1 & \displaystyle\int_0^t \bigl(\xi^2 + (\tilde{\eta} - As\xi)^2\bigr)ds\\[6pt]
			0 & 1
		\end{pmatrix}.
		\]
	\end{lemma}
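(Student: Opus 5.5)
The plan is to exploit the special structure of $B_I$: it is a fixed constant matrix multiplied by a scalar function of $t$. Write
\[
B_I = a(t)\,M,\qquad a(t)=\xi^2+(\tilde{\eta}-At\xi)^2,\qquad M=\begin{pmatrix} -1 & 1\\ 0 & -1\end{pmatrix} = -I + N,\quad N=\begin{pmatrix} 0 & 1\\ 0 & 0\end{pmatrix}.
\]
For fixed parameters $(\xi,\tilde\eta)$, the matrices $B_I(t)$ and $B_I(s)$ commute for all $s,t$, because each is a scalar multiple of the same matrix $M$. Consequently the time-ordered exponential reduces to an ordinary matrix exponential. Setting
\[
\Phi(t)=\int_0^t a(s)\,ds=\int_0^t\bigl(\xi^2+(\tilde{\eta}-As\xi)^2\bigr)ds,
\]
the candidate solution is $\hat G_I(t)=\exp(\Phi(t)M)$.

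Next I would evaluate this exponential explicitly. Since $-I$ commutes with $N$, we have $\exp(\Phi M)=e^{-\Phi}\exp(\Phi N)$. Because $N^2=0$, the series for $\exp(\Phi N)$ terminates after two terms, giving $I+\Phi N$. This is exactly the matrix $D$ in the statement, so $\hat G_I=e^{-\Phi(t)}D$.

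To keep the argument self-contained, I would also verify the formula directly rather than rely on the commuting-exponential principle. At $t=0$ we have $\Phi(0)=0$, so the expression equals $I_{2\times2}$. For the derivative, using $\Phi'(t)=a(t)$,
\[
\partial_t\bigl(e^{-\Phi}(I+\Phi N)\bigr)=a(t)\,e^{-\Phi}\bigl(-(I+\Phi N)+N\bigr).
\]
On the other hand,
\[
B_I\hat G_I=a(t)(-I+N)\,e^{-\Phi}(I+\Phi N)=a(t)\,e^{-\Phi}\bigl(-I-\Phi N+N\bigr),
\]
where $N^2=0$ was used again. The two expressions coincide. Uniqueness of solutions to linear ODEs with continuous coefficients then identifies this expression as the solution of (\ref{3.4}).

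No step is a real obstacle. The only point needing care is the justification of $\exp\!\bigl(\int_0^t B_I\bigr)$ for a time-dependent coefficient matrix, which rests on the commutativity noted at the start. The direct verification above bypasses that issue entirely, so I would present the verification as the proof.
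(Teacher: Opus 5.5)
Your proof is correct, and it takes a different route from the paper.

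The paper works entrywise. It solves the scalar equation $\partial_t\hat G_{I4}=-a(t)\hat G_{I4}$ for the lower-right entry. It then obtains $\hat G_{I3}$ from $\partial_t\hat G_{I3}+a\hat G_{I3}=a\hat G_{I4}$ using the integrating factor $e^{\Phi(t)}$. The first column ($\hat G_{I2}=0$, $\hat G_{I1}=e^{-\Phi}$) is left to ``a similar argument''.

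You instead observe that $B_I(t)=a(t)(-I+N)$ is a commuting family with $N^2=0$. This gives the closed form $e^{-\Phi}(I+\Phi N)$ directly, which you then confirm by checking the equation and the initial condition and invoking uniqueness.

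What each route buys:
\begin{itemize}
\item The paper's back-substitution is constructive: it derives the formula rather than verifying a guess, and uniqueness is built into the scalar solution formulas. However, it relies on the triangular structure one entry at a time and leaves half the matrix to the reader.
\item Your argument treats both columns at once. It explains structurally why the answer is a scalar decay factor times the unipotent matrix $D=I+\Phi N$, and it applies verbatim to any coefficient of the form (scalar function)$\,\times\,$(constant matrix).
\item It also makes the paper's next step transparent. The scalar factor cancels in $\hat G_I^{-1}B_J\hat G_I$, and $D^{-1}=I-\Phi N$, so the formula for $D^{-1}B_JD$ follows immediately.
\end{itemize}
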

	\begin{proof}
		We define
		\begin{equation}
			\hat{G}_I = \begin{pmatrix}
				\hat{G}_{I1} & \hat{G}_{I3} \\
				\hat{G}_{I2} & \hat{G}_{I4}
			\end{pmatrix}
		\end{equation}
		We first consider $	\hat{G}_{I3}$ and $	\hat{G}_{I4}$,
		then $\hat{G}_{I4}$ satisfies
		\[
		\begin{cases}
			\partial_t \hat{G}_{I4}+\bigl(\xi^2 + (\tilde{\eta} - At\xi)^2\bigr)\hat{G}_{I4}=0,\\
			\hat{G}_{I4}\big|_{t=0}=1,
		\end{cases}
		\]
		and hence
		\[
		\hat{G}_{I4}
		=
		\exp\!\left(-\int_0^t \bigl(\xi^2 + (\tilde{\eta} - As\xi)^2\bigr)ds\right).
		\]
		Then $\hat{G}_{I3}$ satisfies
		\[
		\begin{cases}
			\partial_t \hat{G}_{I3}+\bigl(\xi^2 + (\tilde{\eta} - At\xi)^2\bigr)\hat{G}_{I3}
			-\bigl(\xi^2 + (\tilde{\eta} - At\xi)^2\bigr)\hat{G}_{I4}=0,\\
			\hat{G}_{I3}\big|_{t=0}=0.
		\end{cases}
		\]
		Multiplying by the integrating factor
		$\exp\bigl(\int_0^t (\xi^2 + (\tilde{\eta}- As\xi)^2)ds\bigr)$, we get
		\[
		\partial_t\left(
		\hat{G}_{I3}\exp\!\left(\int_0^t \bigl(\xi^2 + (\tilde{\eta}- As\xi)^2\bigr)ds\right)
		\right)
		=
		\bigl(\xi^2 + (\tilde{\eta}- At\xi)^2\bigr).
		\]
		Integrating from $0$ to $t$ yields
		\[
		\hat{G}_{I3}
		=
		\exp\!\left(-\int_0^t \bigl(\xi^2 + (\tilde{\eta}- As\xi)^2\bigr)ds\right)
		\int_0^t \bigl(\xi^2 + (\tilde{\eta}- As\xi)^2\bigr)ds.
		\]
		By a similar argument, we have 
		\[
		\hat{G}_{I1}
		=
		\exp\!\left(-\int_0^t \bigl(\xi^2 + (\tilde{\eta}- As\xi)^2\bigr)ds\right).
		\]
		and
		\[
		\hat{G}_{I2}=0.
		\]
		Finally, we obtain
		\[
		\hat{G}_I
		=
		\exp\!\left(-\int_0^t \bigl(\xi^2 + (\tilde{\eta}- As\xi)^2\bigr)ds\right)D,
		\]
		where
		\[
		D=
		\begin{pmatrix}
			1 & \displaystyle\int_0^t \bigl(\xi^2 + (\tilde{\eta}- As\xi)^2\bigr)ds\\[6pt]
			0 & 1
		\end{pmatrix}.
		\]
	\end{proof}
	We note that $\hat{G}_2= \widehat{G}_I \widehat{G}_J$, substituting this equality into (\ref{3.3}), then we have
	\[
	(\partial_t \widehat{G}_I)\widehat{G}_J + \widehat{G}_I (\partial_t \widehat{G}_J) - (B_I+B_J)\widehat{G}_I \widehat{G}_J = 0.
	\]
	From (\ref{3.4}), we get
	\[
	\widehat{G}_I (\partial_t \widehat{G}_J) - B_J \widehat{G}_I \widehat{G}_J = 0,
	\]
	thus
	\[
	\partial_t \widehat{G}_J - (\widehat{G}_I^{-1} B_J \widehat{G}_I)\widehat{G}_J = 0.
	\]
	Note that $\widehat{G}_I^{-1} B_J \widehat{G}_I = D^{-1}B_J D$. Then we can compute that
	\[
	D^{-1}B_J D = \begin{pmatrix} -P(t) & -P(t)(P(t)-2) \\ 1 & P(t)-2 \end{pmatrix}
	\]
	where
	\[
	P(t) = \int_0^t (\xi^2+(\tilde{\eta}-A s\xi)^2) ds
	\]
	Then we consider the following ODE:
	\begin{equation}\label{3.5}
		\begin{cases}
			\partial_t \hat{G}_J + \begin{pmatrix} P(t) & P(t)(P(t)-2) \\ -1 & -(P(t)-2) \end{pmatrix} \hat{G}_J = 0,  \\
			\hat{G}_J\big|_{t=0} = I_{2\times 2},
		\end{cases}
	\end{equation}
	where $P(t) = \int_0^t \left(\xi^2 + (\tilde{\eta}- A s \xi)^2\right) ds$.
	Let $\hat{G}_J = \begin{pmatrix} g_{11} & g_{12} \\ g_{21} & g_{22} \end{pmatrix}$. 
	For $\hat{G}_J$, we have the following estimate.
	\begin{lemma}\label{lem3.2}
		For the solution of (\ref{3.5}), we have the following estimate
		$$|g_{11}|, |g_{12}|\leq C (e^{\frac{1}{2}P(t)}P(t)+1),\ \ |g_{21}|, |g_{22}|\leq e^{\frac{1}{2}P(t)}.$$
	\end{lemma}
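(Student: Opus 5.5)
The plan is to exploit the hidden structure of \eqref{3.5} rather than its raw coefficients, which grow like $P(t)^2$. Since $\hat G_J$ has initial data $I_{2\times2}$, it suffices to treat each column $(a,b)^T=(g_{1j},g_{2j})^T$, $j=1,2$, separately. Writing $p(t)=P'(t)=\xi^2+(\tilde\eta-At\xi)^2\ge 0$, \eqref{3.5} reads
\[
a'=-P a-P(P-2)b,\qquad b'=a+(P-2)b .
\]

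\textbf{Change of variables.} I introduce $c:=a+Pb$, which essentially undoes the conjugation by $D$ in Lemma \ref{lem3.1}. A direct computation, in which the $P a$ and $P(P-2)b$ terms cancel, gives
\[
c'=p\,b,\qquad b'=c-2b .
\]
This is a system with only the bounded coupling $p\ge0$, and with the damping $-2b$ coming from $B_J$. The initial data are $(c,b)(0)=(1,0)$ for $j=1$ and $(0,1)$ for $j=2$.

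\textbf{Gronwall step.} From $b'+2b=c$ I write
\[
b(t)=e^{-2t}b(0)+\int_0^t e^{-2(t-s)}c(s)\,ds ,
\]
so with $m(t)=\sup_{s\le t}|c(s)|$ one has $|b(s)|\le |b(0)|+\tfrac12 m(s)$. Inserting this into $c(t)=c(0)+\int_0^t p\,b\,ds$ gives
\[
m(t)\le |c(0)|+|b(0)|P(t)+\tfrac12\int_0^t p(s)\,m(s)\,ds .
\]
The term $|c(0)|+|b(0)|P(t)$ is nondecreasing in $t$, so Lemma \ref{lem:001} applies with $\beta=p/2$. It yields
\[
m(t)\le \bigl(|c(0)|+|b(0)|P(t)\bigr)e^{\frac12 P(t)} .
\]
Feeding this back bounds $b$, and hence $g_{21},g_{22}$, by a constant times $(1+P)e^{\frac12P}$. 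The first row is then recovered from $a=c-Pb$.

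\textbf{Main obstacle.} The crude bound above loses polynomial factors of $P$: it gives roughly $P^2e^{P/2}$ for $a$, whereas the statement claims $e^{P/2}$ for the second row and $C(Pe^{P/2}+1)$ for the first. To sharpen it I would avoid splitting into sup-norms and instead run an energy estimate on the pair $(b,c)$ with a weight chosen so that the cross terms are absorbed. A candidate is
\[
\frac{d}{dt}\bigl(|c|^2+p\,|b|^2\bigr)\quad\text{or}\quad \frac{d}{dt}\Bigl(e^{-P}\bigl(|c|^2+|b|^2\bigr)\Bigr),
\]
using the damping $-2|b|^2$ and $2pbc\le p(|b|^2+|c|^2)$ to get growth of exactly $e^{P/2}$ for $b$ without the polynomial prefactor.

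For $a=c-Pb$, I would estimate $c$ directly through $c(t)=c(0)+\int_0^t p\,b\,ds\le 1+\int_0^t p\,e^{P/2}\,ds$, which is of size $1+e^{P/2}$. Combined with $P|b|\le Pe^{P/2}$, this produces the stated form $C(Pe^{P/2}+1)$.

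I expect the key difficulty to be obtaining the sharp $e^{\frac12P(t)}$ for $b$ uniformly in $(\xi,\tilde\eta,A)$. The natural fallback is to accept an extra polynomial factor in $P$. That factor is harmless in the later Green's function estimates, since it is ultimately multiplied by $e^{-P}$ coming from $\hat G_I$.
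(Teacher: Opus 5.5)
Your substitution $c=a+Pb$ is exactly the paper's $w=g_{11}+Pg_{21}$. Your Gronwall argument on $m=\sup_{s\le t}|c|$ already settles the first column: with $(c,b)(0)=(1,0)$ it gives $m\le e^{P/2}$, hence $|g_{21}|\le\frac12e^{P/2}$ and $|g_{11}|\le(1+\frac{P}{2})e^{P/2}\le C(Pe^{P/2}+1)$.

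The gap is the second column, $(c,b)(0)=(0,1)$. There the forcing term is $P(t)$, and Lemma \ref{lem:001} multiplies a monotone forcing by $e^{\frac12 P}$. So you only get $m\le Pe^{P/2}$, $|g_{22}|\le 1+\frac12Pe^{P/2}$ and $|g_{12}|\lesssim P^2e^{P/2}$, which is not what is claimed.

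Your proposed repairs do not close this gap:
\begin{itemize}
\item For $E=|c|^2+|b|^2$ one finds $\frac{d}{dt}(e^{-P}E)=e^{-P}\bigl(-p(b-c)^2+2bc-4b^2\bigr)$. The $p$-terms are harmless, but $2bc-4b^2$ can equal $c^2/4$ because $c$ is undamped. This leaves a factor $e^{t/4}$ in $e^{-P}E$ that $P$ does not control: take $\xi=0$ and $\tilde\eta\to0$, so $P=\tilde\eta^2t$ stays small for large $t$.
\item The weight $|c|^2+p|b|^2$ brings in $p'=-2A\xi(\tilde\eta-At\xi)$, which is unbounded and of either sign. It also gives no control of $b$ where $p$ is small.
\end{itemize}
An energy argument does work in the variables $(c,\,2b-c)$, for which $\frac{d}{dt}\bigl(c^2+(2b-c)^2\bigr)\le p\,c^2$, but that is not what you proposed. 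Your fallback of accepting a polynomial loss would suffice for Proposition \ref{prop:3.3}, but it proves a weaker lemma than the one stated.

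The missing idea is to eliminate $c$ rather than $b$. Substitute $c(s)=c(0)+\int_0^s p\,b$ into your Duhamel formula for $b$ and exchange the order of integration. This gives a closed Volterra equation for $b$ alone:
\[
b(t)=e^{-2t}b(0)+\frac{1-e^{-2t}}{2}\,c(0)+\int_0^t\frac{1-e^{-2(t-s)}}{2}\,p(s)\,b(s)\,ds .
\]
Its free term is bounded by $\max\{|b(0)|,|c(0)|/2\}\le 1$, and its kernel is bounded by $p(s)/2$. Lemma \ref{lem:001} with constant forcing then gives $|b|\le e^{P/2}$ in both columns. Next, $|c|\le |c(0)|+2(e^{P/2}-1)$. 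Finally $a=c-Pb$ gives $|g_{11}|\le C(Pe^{P/2}+1)$ and $|g_{12}|\le 2(e^{P/2}-1)+Pe^{P/2}\le 2Pe^{P/2}$. This is the paper's route; it reaches the same integral equation through $g_{21}=e^{-t}F$ with $F''=(1+p)F$.

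Alternatively, you can keep your scheme and replace Lemma \ref{lem:001} by the sharp linear Gronwall inequality. Then $m\le P+\frac12\int_0^t p\,m$ gives $m\le 2(e^{P/2}-1)$, and $|b|\le 1+\frac12 m\le e^{P/2}$.
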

	\begin{proof}
		We first consider $g_{11}$ and $g_{21}$, which satisfy the following equations:
		\begin{equation}\label{3.03}
			\begin{cases}
				\partial_t g_{11}+ P(t) g_{11} + P(t)(P(t)-2) g_{21} = 0, \\
				\partial_t g_{21} - g_{11} - (P(t)-2) g_{21} = 0,\\
				g_{11}\big|_{t=0} = 1,\quad g_{21}\big|_{t=0} = 0.
			\end{cases}
		\end{equation}
		From (\ref{3.03}), we have
		\[
		\begin{aligned}
			\partial_t (g_{11} + P(t) g_{21}) &= \partial_t g_{11} + P(t) \partial_t g_{21} + P'(t) g_{21} \\
			&= P'(t) g_{21}.
		\end{aligned}
		\]
		
		Let $w = g_{11} + P(t) g_{21}$. Then (\ref{3.03}) becomes
		\begin{equation}\label{3.7}
			\begin{cases}
				\partial_t w - P'(t) g_{21} = 0, \\
				\partial_t g_{21} - w + 2 g_{21} = 0, \\
				w\big|_{t=0} = 1, \quad g_{21}\big|_{t=0} = 0.
			\end{cases}
		\end{equation}
		
		From the second equation of (\ref{3.7}), we get $\partial_t^2 g_{21} - \partial_t w + 2 \partial_t g_{21} = 0$.
		Substituting into the first equation of (\ref{3.7}), we obtain
		\[
		\partial_t^2 g_{21} - P'(t) g_{21} + 2 \partial_t g_{21} = 0, \quad g_{21}\big|_{t=0} = 0, \quad \partial_t g_{21}\big|_{t=0} = 1.
		\]
		
		Let $g_{21} = e^{-t} F$, then we have
		\[
		\begin{aligned}
			\partial_t g_{21} &= -e^{-t} F + e^{-t} F', \\
			\partial_t^2 g_{21} &= e^{-t} F - 2 e^{-t} F' + e^{-t} F''.
		\end{aligned}
		\]
		
		This gives $e^{-t} \left(F'' - F - P'(t) F\right) = 0$, i.e.,
		\[
		F'' - (1 + P'(t)) F = 0.
		\]
		
		From the initial conditions of $g_{21}$ and its derivative, we have for $F$:
		\begin{equation}\label{3.8}
			\begin{cases}
				F'' - (1 + P'(t)) F = 0,  \\
				F(0) = 0, \quad F'(0) = 1.
			\end{cases}
		\end{equation}
		
		We now verify that
		\[
		F(t) = \frac{e^t - e^{-t}}{2} + \int_0^t \frac{e^{t-s} - e^{-(t-s)}}{2} P'(s) F(s) ds
		\]
		satisfies the equation (\ref{3.8}).
		
		First, it is clear that $F(0) = 0$.
		Next,
		\[
		F'(t) = \frac{e^t + e^{-t}}{2} + \int_0^t \frac{e^{t-s} + e^{-(t-s)}}{2} P'(s) F(s) ds,
		\]
		so $F'(0) = 1$.
		
		For $F''(t)$, we have
		\[
		\begin{aligned}
			F''(t) &= \frac{e^t - e^{-t}}{2} + P'(t) F(t) + \int_0^t \frac{e^{t-s} - e^{-(t-s)}}{2} P'(s) F(s) ds \\
			&= F(t) + P'(t) F(t),
		\end{aligned}
		\]
		which means $F$ satisfies $F'' - (1 + P'(t)) F = 0$.
		Thus,
		\[
		g_{21} =\frac{1 - e^{-2t}}{2} + \int_0^t \frac{1 - e^{-2(t-s)}}{2} P'(s) F(s) ds.
		\]
		Simple calculation gives
		\[
		|g_{21}(t)| \leq \frac{1}{2} + \frac{1}{2} \int_0^t P'(s) |g_{21}(s)| ds,
		\]
		and by the Gronwall's inequality in Lemma \ref{lem:001}, we have
		\[
		\begin{aligned}
			|g_{21}(t)| &\leq \frac{1}{2} \exp\left( \frac{1}{2} \int_0^t P'(s) ds \right) \\
			&= \frac{1}{2} \exp\left( \frac{1}{2} P(t) \right).
		\end{aligned}
		\]
		Next, we onsider the estimate for \( g_{11} \). Note that
		\[
		\begin{aligned}
			\partial_t\bigl(g_{11} + P(t)g_{21}\bigr)
			&= \partial_t g_{11} + P(t)\,\partial_t g_{21} + P'(t)g_{21} \\
			&= P'(t)g_{21}.
		\end{aligned}
		\]
		Integrating both sides with respect to \( t \), we obtain
		\[
		g_{11}(t) + P(t)g_{21}(t) = 1 + \int_0^t P'(s)g_{21}(s)\,ds.
		\]
		Hence,
		\[
		\begin{aligned}
			|g_{11}(t)|
			&\leq P(t)|g_{21}(t)| + C\,e^{\frac{1}{2}P(t)}\int_0^t P'(s)\,ds + 1 \\
			&\leq C\bigl(P(t)e^{\frac{1}{2}P(t)} + 1\bigr).
		\end{aligned}
		\]
		Similarly, by calculation we obtain
		\[
		g_{22} = e^{-2t} + \int_{0}^{t} \frac{1 - e^{-2(t-s)}}{2} P'(s) g_{22}(s) \, ds.
		\]
		By the Lemma \ref{lem:001}, we have
		\[
		|g_{22}| \leq e^{\frac{1}{2}P(t)}.
		\]
		Substituting into the equation satisfied by \(g_{12}\) and using the integrating factor method, we obtain
		\[
		|g_{12}| \leq C e^{\frac{1}{2}P(t)}  P(t).
		\]
	\end{proof}
	From Lemma \ref{lem3.1} and Lemma \ref{lem3.2}, we see that the form of the solution to equation (\ref{3.2}) is mainly determined by its dominant factor. This conclusion can be summarized in the following proposition.
	
	\begin{proposition}\label{prop:3.3}
		The solution $\hat{\mathbb{G}}_2$ to equation (\ref{3.2}) has the following estimate:
		\begin{equation*}
			|\widehat{\mathbb{G}}_2(\xi, \eta,t)|
			\leq \exp{(-C((|\xi|^2t(1+(At)^2)+|\eta|^2t))}I_{2\times 2}.     
		\end{equation*}
	\end{proposition}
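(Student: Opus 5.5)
The estimate will follow by combining Lemma \ref{lem3.1} and Lemma \ref{lem3.2} through the factorization $\hat{\mathbb{G}}_2(\xi,\eta,t)=\widehat{G}(\xi,\tilde\eta,t)=\widehat{G}_I\widehat{G}_J$, and then evaluating $P(t)$ via Lemma \ref{lem:3.1}. The key identity is the change of variables back from $\tilde\eta$ to $\eta$. Since $\tilde\eta=\eta+At\xi$, we have
\[
\tilde\eta-As\xi=\eta+A(t-s)\xi,
\]
so that
\[
P(t)=\int_0^t\bigl(\xi^2+(\eta+A(t-s)\xi)^2\bigr)ds=\xi^2t+\int_0^t(\eta+A\sigma\xi)^2d\sigma .
\]
Lemma \ref{lem:3.1} with $\alpha=2$ then gives the lower bound
\[
P(t)\geq C\bigl(|\xi|^2t(1+(At)^2)+|\eta|^2t\bigr).
\]
The upper bound $P(t)\le C(|\xi|^2t(1+(At)^2)+|\eta|^2t)$ is trivial, so $P$ is comparable to the target exponent.

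Next I will multiply the matrices entrywise. Lemma \ref{lem3.1} gives $\widehat G_I=e^{-P}D$ with $D=\begin{pmatrix}1&P\\0&1\end{pmatrix}$, and Lemma \ref{lem3.2} bounds the entries of $\widehat G_J$ by $C(Pe^{P/2}+1)$ in the first row and $e^{P/2}$ in the second. Each entry of $D\widehat G_J$ is then bounded by $C(1+P)^2e^{P/2}+C(1+P)$. Multiplying by $e^{-P}$ gives, for every entry,
\[
|\widehat G_{ij}|\le C(1+P)^2e^{-P/2}\le C'e^{-P/4}.
\]
This absorbs the polynomial factors into half of the exponential decay. Combining with the lower bound on $P$ yields the claim with a new constant $C$. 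The bound is read entrywise, which is how I interpret the notation $|\cdot|\le(\cdot)I_{2\times2}$; off-diagonal entries are controlled by the same quantity. Any multiplicative constant in front can be handled by noting that for $P\le 1$ the matrix is close to the identity, or it is simply retained as a harmless constant.

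The main obstacle is that Lemma \ref{lem3.2} only controls $\widehat G_J$ by a \emph{growing} factor $e^{P/2}$. The argument therefore hinges on this growth rate being strictly less than the decay rate $e^{-P}$ of the dominant factor. One must check that the Gronwall constant really is $\tfrac12$ and not $1$, since with $e^{P}$ no decay would survive. If that margin were in doubt, I would sharpen the estimate on $g_{21}$ using the integral equation for $F$. Writing $F'' = (1+P')F$, the growth of $F$ is like $\exp\int_0^t\sqrt{1+P'}$. Since $\sqrt{1+P'}\le 1+\tfrac12P'$, this gives $g_{21}\lesssim e^{P/2}$ consistently, confirming that the margin $e^{-P/2}$ remains.
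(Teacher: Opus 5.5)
Your proposal is correct and is essentially the paper's own argument. You rewrite $P(t)$ using $\tilde\eta-As\xi=\eta+A(t-s)\xi$, bound it below by Lemma~\ref{lem:3.1} with $\alpha=2$, and let the factor $e^{-P}$ from $\widehat G_I$ absorb both the $e^{P/2}$ growth of $\widehat G_J$ from Lemma~\ref{lem3.2} and the polynomial factors from $D$, which you spell out in more detail than the paper does.

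The only slip is your aside that $\widehat{\mathbb{G}}_2$ is close to the identity when $P\le 1$. That is false: at $\xi=\eta=0$ one has $P\equiv 0$, yet the second row of $\widehat{\mathbb{G}}_2$ is $\bigl(\frac{1-e^{-2t}}{2},\,e^{-2t}\bigr)$. Simply retain the multiplicative constant instead; this is harmless since Lemma~\ref{lem:3.2} carries a constant $C$ anyway.
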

	\begin{proof}
		From the coordinate transformation $ \tilde{\eta} = \eta + A t \xi
		$, we have
		\[
		P(t) = \int_0^t (\xi^2+(\tilde{\eta}-A s\xi)^2) ds=\int_0^t (\xi^2+(\eta+A(t-s)\xi)^2) ds= \int_0^t (\xi^2+(\eta+As\xi)^2) ds,
		\]
		then from $\hat{G} = \hat{G}_I \hat{G}_J$, the expression for $\hat{G}_I$, the estimate of $\hat{G}_J$, Lemma \ref{lem:3.1}, we have
		\begin{equation*}
			|\widehat{\mathbb{G}}_2(\xi, \eta, t)|
			\leq \exp{(-C((|\xi|^2t(1+(At)^2)+|\eta|^2t))}I_{2\times 2},
		\end{equation*}
		then we completed the proof of the lemma.
	\end{proof}
	
	\section{The Estimate of the Green's Function}\label{s:estimateG}
	With the expression for $\hat{G}_I$ and the estimate for $\hat{G}_J$ at hand , we will use Young's inequality to obtain the $L^p$ ($p\geq2$) norm estimate of the Green's function $\mathbb{G}$.
	
	\begin{lemma}\label{lem:3.2} For any non-negative integers $k=k_1+k_2$, $1\leq q \leq \infty,$ we have the following estimate
		\begin{equation}\label{eq:2.7}
			\||\xi|^{k_1}|\eta|^{k_2}\widehat{\mathbb{G}}_2(\cdot, \cdot, t)\|_{L^q} \leq
			Ct^{-\frac{1}{q}-\frac{k}{2}}(1+(At)^2)^{-\frac{1}{2}(\frac{1}{q}+k_1)}.
		\end{equation}
	\end{lemma}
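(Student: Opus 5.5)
The plan is to use only the pointwise bound of Proposition \ref{prop:3.3} and reduce the estimate to a computation with Gaussians in $(\xi,\eta)$. By Proposition \ref{prop:3.3},
\[
|\xi|^{k_1}|\eta|^{k_2}|\widehat{\mathbb{G}}_2(\xi,\eta,t)|\le C|\xi|^{k_1}|\eta|^{k_2}\exp\bigl(-C|\xi|^2t(1+(At)^2)\bigr)\exp\bigl(-C|\eta|^2 t\bigr).
\]
The right-hand side factorizes into a function of $\xi$ times a function of $\eta$. Hence its $L^q(\mathbb{R}^2)$ norm is the product of two one-dimensional $L^q$ norms, by Fubini for $q<\infty$ and trivially for $q=\infty$.

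Each factor is handled by rescaling. For a parameter $a>0$, the change of variables $\xi=a^{-1/2}z$ gives
\[
\bigl\||\xi|^{k_1}e^{-Ca|\xi|^2}\bigr\|_{L^q(\mathbb{R})}=a^{-\frac{k_1}{2}-\frac{1}{2q}}\bigl\||z|^{k_1}e^{-C|z|^2}\bigr\|_{L^q(\mathbb{R})}.
\]
The last norm is a finite constant depending on $k_1$ and $q$. When $q=\infty$ the factor $a^{-1/(2q)}$ is just $1$, and the supremum of $|z|^{k_1}e^{-C|z|^2}$ is still finite, also for $k_1=0$. We apply this with $a=t(1+(At)^2)$ in the $\xi$-direction and $a=t$ in the $\eta$-direction.

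Multiplying the two factors gives
\[
t^{-\frac{k_1}{2}-\frac{1}{2q}}(1+(At)^2)^{-\frac{k_1}{2}-\frac{1}{2q}}\cdot t^{-\frac{k_2}{2}-\frac{1}{2q}}=t^{-\frac1q-\frac k2}(1+(At)^2)^{-\frac12(\frac1q+k_1)}.
\]
This is exactly (\ref{eq:2.7}).

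The only real point to check is that Proposition \ref{prop:3.3} is usable as stated, since it carries no polynomial prefactor. Combining Lemma \ref{lem3.1} with Lemma \ref{lem3.2}, the product $\hat G_I\hat G_J$ is bounded by $e^{-P(t)}(1+P(t))^2e^{P(t)/2}\le Ce^{-P(t)/4}$. Lemma \ref{lem:3.1} with $\alpha=2$ then gives $P(t)\ge c(|\xi|^2 t(1+(At)^2)+|\eta|^2 t)$. The polynomial loss is absorbed by shrinking the constant in the exponent, so the Gaussian bound holds with some constant $C>0$, and that is all the scaling argument above needs. No step beyond this is expected to be an obstacle.
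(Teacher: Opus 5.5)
Your proof is correct and takes essentially the same route as the paper: bound $\widehat{\mathbb{G}}_2$ by the anisotropic Gaussian of Proposition \ref{prop:3.3} and compute the weighted $L^q$ norm, which you do explicitly by factorizing in $(\xi,\eta)$ and rescaling each direction, handling $q=\infty$ correctly, where the paper only states the resulting integral bound. Your check that the Gaussian bound is valid, obtained from Lemmas \ref{lem3.1}, \ref{lem3.2} and \ref{lem:3.1} by absorbing the factor $(1+P)$ into $e^{-P/4}$, is exactly the content of the paper's proof of Proposition \ref{prop:3.3}.
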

	\begin{proof}
		For $k=k_1+k_2$, from Proposition \ref{prop:3.3}, we can obtain
		\begin{equation*}
			\begin{array}{rl}
				&(\int_{\mathbb{R}^2}|\xi|^{qk_1}|\eta|^{qk_2}\exp{(-Cq(|\xi|^2t(1+(At)^2)+|\eta|^2t))}d\xi d\eta)^{\frac{1}{q}} \\
				\leq &Ct^{-\frac{1}{q}-\frac{k}{2}}(1+(At)^2)^{-\frac{1}{2}(\frac{1}{q}+k_1)}t.
			\end{array}
		\end{equation*}
		Thus, we proved the lemma.
	\end{proof}
	\begin{lemma}\label{lem:3.3} For $p\geq 2, k=k_1+k_2$, we have the following estimates
		\begin{equation}\label{eq:2.10}
			\|\partial^{k_1}_{x}\partial^{k_2}_{y}\mathbb{G}(\cdot, \cdot, t; x^\prime,y^\prime)\|_{L^p}\leq Ct^{-(1-\frac{1}{p})-\frac{k}{2}}(1+(At)^2)^{-\frac{k_1}{2}-\frac{1}{2}(1-\frac{1}{p})},
		\end{equation}
		\begin{equation}\label{eq:2.11}
			\|\partial^{k_1}_{x}\partial^{k_2}_{y}\mathbb{G}(x, y,t;\cdot, \cdot)\|_{L^p}\leq Ct^{-(1-\frac{1}{p})-\frac{k}{2}}(1+(At)^2)^{-\frac{k_1}{2}-\frac{1}{2}(1-\frac{1}{p})}.
		\end{equation}
	\end{lemma}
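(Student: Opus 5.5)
The approach is Hausdorff--Young combined with Lemma \ref{lem:3.2}. Recall that $\widehat{\mathbb{G}}=\widehat{\mathbb{G}}_1\widehat{\mathbb{G}}_2$, where by (\ref{555}) the factor $\widehat{\mathbb{G}}_1$ is a pure phase $\exp(-i(x'\xi+y'(\eta+At\xi)))I_{2\times2}$ of modulus one. Differentiation in $(x,y)$ corresponds on the Fourier side to multiplication by $(i\xi)^{k_1}(i\eta)^{k_2}$. Hence
\[
|\mathcal{F}(\partial_x^{k_1}\partial_y^{k_2}\mathbb{G})(\xi,\eta,t)|\leq |\xi|^{k_1}|\eta|^{k_2}|\widehat{\mathbb{G}}_2(\xi,\eta,t)|,
\]
and this bound is uniform in $(x',y')$. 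For $p\ge 2$, Hausdorff--Young gives $\|f\|_{L^p}\le C\|\hat f\|_{L^{q}}$ with $\frac1q=1-\frac1p$, $1\le q\le 2$. Applying Lemma \ref{lem:3.2} with this $q$ yields
\[
\|\partial_x^{k_1}\partial_y^{k_2}\mathbb{G}(\cdot,\cdot,t;x',y')\|_{L^p}\le Ct^{-(1-\frac1p)-\frac k2}(1+(At)^2)^{-\frac12(1-\frac1p)-\frac{k_1}{2}},
\]
which is (\ref{eq:2.10}). The endpoint $p=\infty$ ($q=1$) is just the trivial bound $\|f\|_{L^\infty}\le \|\hat f\|_{L^1}$.

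For (\ref{eq:2.11}) the integration variable is $(x',y')$ with $(x,y)$ fixed, so I would Fourier-transform in the primed variables instead. Writing
\[
\mathbb{G}(x,y,t;x',y')=\frac{1}{(2\pi)^2}\int e^{i(x\xi+y\eta)}e^{-i(x'\xi+y'(\eta+At\xi))}\widehat{\mathbb{G}}_2(\xi,\eta,t)\,d\xi d\eta,
\]
I change variables $\xi'=\xi$, $\eta'=\eta+At\xi$. This map has unit Jacobian, and it exhibits $\mathbb{G}(x,y,t;\cdot,\cdot)$ as the inverse Fourier transform (in $(x',y')$) of the function
\[
e^{i(x\xi'+y(\eta'-At\xi'))}\widehat{\mathbb{G}}_2(\xi',\eta'-At\xi',t),
\]
whose modulus is $|\widehat{\mathbb{G}}_2(\xi',\eta'-At\xi',t)|$. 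Derivatives $\partial_x^{k_1}\partial_y^{k_2}$ bring down $\xi^{k_1}\eta^{k_2}$, i.e.\ $\xi'^{k_1}(\eta'-At\xi')^{k_2}$. Taking the $L^q$ norm in $(\xi',\eta')$ and changing variables back, again with unit Jacobian, returns exactly $\||\xi|^{k_1}|\eta|^{k_2}\widehat{\mathbb{G}}_2\|_{L^q}$. Hausdorff--Young and Lemma \ref{lem:3.2} then give (\ref{eq:2.11}) with the same right-hand side.

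The only delicate point is in this second estimate: the phase depends on $(x',y')$ through the sheared frequency $\eta+At\xi$, so one cannot directly read off the Fourier transform in the primed variables. The measure-preserving shear substitution resolves this, and since $L^q$ norms are invariant under it, no loss in $A$ occurs. Since the kernel bounds are uniform in the frozen variables, both estimates feed directly into the kernel form of Young's inequality in Lemma \ref{lem2.1} later on.
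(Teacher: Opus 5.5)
Your proof is correct and is essentially the paper's argument. The paper bounds the $L^2$ norm by Plancherel and the $L^\infty$ norm by the $L^1$ norm of $|\xi|^{k_1}|\eta|^{k_2}\widehat{\mathbb{G}}_2$, then interpolates via $\|f\|_{L^p}\le \|f\|_{L^2}^{2/p}\|f\|_{L^\infty}^{1-2/p}$ using Lemma~\ref{lem:3.2} at $q=2$ and $q=1$; these are the two endpoints behind your Hausdorff--Young step at $q=p'$, and they give the same exponents. For the estimate in the primed variables, the paper writes $\mathbb{G}(x,y,t;x',y')=F(x-x'-Aty',\,y-y',\,t)$ and uses the same unimodular shear in physical space, which is the counterpart of your substitution $\eta'=\eta+At\xi$ on the Fourier side.
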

	\begin{proof}
		Parseval equality and Young's inequality gives
		\begin{equation}\label{eq:2.12}
			\begin{array}{rl}
				\|\partial^{k_1}_{x}\partial^{k_2}_{y}\mathbb{G}(\cdot, \cdot, t; x^\prime,y^\prime)\|_{L^2}&=\||\xi|^{k_1}|\eta|^{k_2}\widehat{\mathbb{G}}(\cdot, \cdot,t; x^\prime,y^\prime)\|_{L^2},\\
				|\partial^{k_1}_{x}\partial^{k_2}_{y}\mathbb{G}(\cdot, \cdot,t; x^\prime,y^\prime)|\leq &C\||\xi|^{k_1}|\eta|^{k_2}\widehat{\mathbb{G}}(\cdot, \cdot,t; x^\prime,y^\prime)\|_{L^1}.
			\end{array}
		\end{equation}
		Using the interpolation theorem, from \eqref{eq:2.12}, the defination of  $\hat{\mathbb{G}}_1$ in (\ref{555}) and Lemma \ref{lem:3.2}, we can obtain
		\begin{align}\label{eq:2.13}
			&\|\partial^{k_1}_{x}\partial^{k_2}_{y}\mathbb{G}(\cdot, \cdot,t; x^\prime,y^\prime)\|_{L^p}\notag\\
			\leq{}&C\|\partial^{k_1}_{x}\partial^{k_2}_{y}\mathbb{G}(\cdot, \cdot,t; x^\prime,y^\prime)\|^{2/p}_{L^2}\|\partial^{k_1}_{x}\partial^{k_2}_{y}\mathbb{G}(\cdot, \cdot, t; x^\prime,y^\prime)\|^{1-2/p}_{L^\infty}\notag\\
			\leq{}&C\||\xi|^{k_1}|\eta|^{k_2}\widehat{\mathbb{G}_2}(\cdot, \cdot, t)\|^{2/p}_{L^2}\||\xi|^{k_1}|\eta|^{k_2}\widehat{\mathbb{G}_2}(\cdot, \cdot, t)\|^{1-2/p}_{L^1}\notag\\
			\leq{}&Ct^{-(1-\frac{1}{p})-\frac{k}{2}}(1+(At)^2)^{-\frac{k_1}{2}-\frac{1}{2}(1-\frac{1}{p})}.
		\end{align}
		Thus \eqref{eq:2.10} is proved. To prove \eqref{eq:2.11}, we study the relationship between the two sets of the variable $(x, y) $ and $(x^\prime, y^\prime)$ of $\mathbb{G}(x, y, t; x^\prime, y^\prime) $, Taking the inverse Fourier transform on $\widehat{\mathbb{G}}(\xi, \eta,t; x^\prime,y^\prime)$,
		we have
		\begin{equation}\label{eq:2.14}
			\begin{array}{rl}
				&\mathbb{G}(x, y,t; x^\prime,y^\prime)=\int_{\mathbb{R}^2}e^{i(x\xi+y\eta)}\widehat{\mathbb{G}}(\xi, \eta,t; x^\prime,y^\prime)d\xi d\eta \\
				=&\int_{\mathbb{R}^2}e^{i((x-x^\prime-Aty^\prime)\xi+(y-y^\prime)\eta)}\mathbb{G}_2(\xi, \eta, t)d \xi d\eta\\\
				\triangleq &F(x-x^\prime-Aty^\prime, y-y^\prime,t).
			\end{array}
		\end{equation}
		Note the structure of the above formula $F(x-x^\prime-Aty^\prime, y-y^\prime,t)$, we can see that the proof of \eqref{eq:2.11} is exactly the same as the proof of \eqref{eq:2.10}. So the proof of the lemma is complete.
	\end{proof}
	\begin{remark}
		From equation \eqref{eq:2.14}, we know that the Green's function is a kernel function, which has not symmetry for variables $(x,y)$ and $(x', y')$, and we have 
		\begin{equation}\label{3.17}
			\begin{cases}
				\partial_x \mathbb{G}(x - x', y, t; y') = -\partial_{x'} \mathbb{G}(x - x', y, t; y'), \\
				\partial_y \mathbb{G}(x - x', y, t; y') = -\partial_{y'} \mathbb{G}(x - x', y, t; y') + A t \, \partial_{x'} \mathbb{G}(x - x', y, t; y').
			\end{cases}
		\end{equation}
	\end{remark}
	In what follows, we obtain estimates for $\|\mathbb{G}\|_{L^1}$ and $\|\nabla \mathbb{G}\|_{L^1}$ by pointwise estimates.
	\begin{lemma}\label{lem:3.5}
		For \( \mathbb{G}_2 \), we have the following pointwise estimate:
		\begin{equation}\label{777}
			|\mathbb{G}_2(x,y,t)|
			\le C t^{-1} \bigl(1 + (At)^2\bigr)^{-1/2}
			\left(
			1 + \frac{|x|^3}{t^{3/2} \bigl(1 + (At)^2\bigr)^{3/2}}
			+ \frac{|y|^3}{t^{3/2}}
			\right)^{-1}.
		\end{equation}
	\end{lemma}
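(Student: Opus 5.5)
We prove the pointwise bound \eqref{777} by the standard Fourier-side argument: pointwise decay of $\mathbb{G}_2$ follows from $L^1$ bounds on derivatives of $\widehat{\mathbb{G}}_2$ in $(\xi,\eta)$. Set $L_1:=t^{1/2}(1+(At)^2)^{1/2}$ and $L_2:=t^{1/2}$; these are the natural length scales in $x$ and $y$ suggested by Proposition \ref{prop:3.3}. The target is equivalent to the three estimates
\[
|\mathbb{G}_2|\le C(L_1L_2)^{-1},\qquad |x|^3|\mathbb{G}_2|\le C L_1^{3}(L_1L_2)^{-1},\qquad |y|^3|\mathbb{G}_2|\le CL_2^3(L_1L_2)^{-1}.
\]
Adding them gives \eqref{777}, since $L_1L_2=t(1+(At)^2)^{1/2}$.

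\textbf{Step 1: the zeroth-order bound.} The first estimate is exactly Lemma \ref{lem:3.2} with $q=1$, $k=0$, via $|\mathbb{G}_2|\le\|\widehat{\mathbb{G}}_2\|_{L^1}$.

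\textbf{Step 2: reduction to derivative bounds.} For the moment estimates, write
\[
x^3\mathbb{G}_2=\mathcal{F}^{-1}\big((i\partial_\xi)^3\widehat{\mathbb{G}}_2\big),\qquad y^3\mathbb{G}_2=\mathcal{F}^{-1}\big((i\partial_\eta)^3\widehat{\mathbb{G}}_2\big).
\]
It therefore suffices to show
\[
\|\partial_\xi^3\widehat{\mathbb{G}}_2\|_{L^1}\le CL_1^3(L_1L_2)^{-1},\qquad \|\partial_\eta^3\widehat{\mathbb{G}}_2\|_{L^1}\le CL_2^3(L_1L_2)^{-1}.
\]
These will follow from a symbol-type bound: each $\xi$-derivative costs a factor $L_1$ and each $\eta$-derivative a factor $L_2$, up to polynomial factors in $L_1\xi$ and $L_2\eta$, times a Gaussian $\exp(-c(L_1^2\xi^2+L_2^2\eta^2))$. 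Integrating then gives the claim, as in Lemma \ref{lem:3.2}.

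\textbf{Step 3: structure of $\widehat{\mathbb{G}}_2$.} We use $\widehat{\mathbb{G}}_2=\widehat G_I\widehat G_J$ together with the explicit quantity
\[
P=\int_0^t(\xi^2+(\eta+As\xi)^2)\,ds,
\]
a quadratic form in $(\xi,\eta)$ with polynomial coefficients in $t$ and $At$. By Lemma \ref{lem:3.1}, $P\ge c(L_1^2\xi^2+L_2^2\eta^2)$. A direct computation gives $|\partial_\xi P|\lesssim L_1(L_1|\xi|+L_2|\eta|)$, $|\partial_\xi^2P|\lesssim L_1^2$, and $\partial^3P=0$, with the analogous bounds for $\eta$. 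Hence $\widehat G_I=e^{-P}D$ has symbol bounds of the desired form. For $\widehat G_J$, differentiate the ODE \eqref{3.5} in the parameter $\xi$ (respectively $\eta$). The derivatives $\partial^j\widehat G_J$ satisfy the same linear system, forced by $\partial^j$ of the coefficient matrix acting on lower derivatives. Duhamel's formula and Gronwall's inequality (Lemma \ref{lem:001}), run exactly as in Lemma \ref{lem3.2}, then give
\[
|\partial^j\widehat G_J|\lesssim (\text{polynomial in }P,\ \partial P,\ \partial^2P)\,e^{\frac12P}.
\]
Multiplying by $e^{-P}$ still leaves the Gaussian factor $e^{-P/2}$, which absorbs all the polynomial factors in $P$.

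\textbf{Main obstacle.} The hard step is Step 3 for $\widehat G_J$. The coefficients of \eqref{3.5} involve $P(t)$ and $P(t)^2$, and the weight in Gronwall is $P'(s)$, so parameter derivatives produce terms like $\partial_\xi P'(s)$ inside the integrals. I would control these through $\int_0^t|\partial_\xi P'(s)|\,ds\lesssim L_1(L_1|\xi|+L_2|\eta|)$, which again follows from the explicit quadratic form. Since the Gronwall loss is only $e^{\frac12P}$ against the $e^{-P}$ of $\widehat G_I$, all these polynomial losses are harmless. If differentiating the Volterra representation of $F$ in Lemma \ref{lem3.2} becomes awkward, I would instead differentiate the second-order equation \eqref{3.8} for $F$ in the parameter and apply the same variation-of-constants argument.
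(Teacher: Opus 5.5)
Your Steps 1--2 and the final combination agree with the paper. The paper also reduces everything to $\|\widehat{\mathbb{G}}_2\|_{L^1}$, $\|\partial_\xi^3\widehat{\mathbb{G}}_2\|_{L^1}\le Ct^{1/2}(1+(At)^2)$ and $\|\partial_\eta^3\widehat{\mathbb{G}}_2\|_{L^1}\le Ct^{1/2}(1+(At)^2)^{-1/2}$. It splits $\mathbb{R}^2$ into four regions instead of adding the three bounds, which is equivalent.

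\textbf{Where you differ: proving the derivative bounds.} The paper does not return to the factorization $\widehat G_I\widehat G_J$. It differentiates the transport equation \eqref{3.2} in $(\xi,\eta)$ and treats the commutator terms ($A\partial_\eta$ and the matrices linear in $\xi,\eta$) as forcing. It then applies Duhamel with $\widehat{\mathbb{G}}_2(t-s)$ as propagator, splits at $s=t/2$, and climbs a ladder of $L^1$ and $L^\infty$ bounds for $\partial_\eta,\partial_\xi,\partial_\eta^2,\partial_\xi\partial_\eta,\partial_\xi^2,\partial_\eta^3,\partial_\xi^3$. Its only input is the Gaussian bound of Proposition \ref{prop:3.3}.

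That route is soft and would carry over to systems where only a zeroth-order bound is known. It does need the whole induction, and strictly the propagator of \eqref{3.2} also shears the forcing in $\eta$, which the paper leaves implicit.

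Your route uses the explicit quadratic form $P$ to get pointwise symbol bounds $|\partial_\xi^a\partial_\eta^b\widehat{\mathbb{G}}_2|\lesssim L_1^aL_2^b(1+P)^Ne^{-P/2}$ for all orders at once. Every $L^1$ bound is then a single rescaling. It is sharper and more transparent. The price is that you need propagator bounds from an arbitrary initial time $s$, not only from $0$ as in Lemma \ref{lem3.2}. The same Volterra--Gronwall argument gives $|\Psi(t,s)|\le Ce^{\frac12(P(t)-P(s))}$ for the system below.

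\textbf{Two points to make precise.}

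First, differentiate in the parameters on the system for $(w,g_{21})=(g_{11}+Pg_{21},g_{21})$, namely $\dot w=P'g_{21}$, $\dot g_{21}=w-2g_{21}$ (or on \eqref{3.8}). Its coefficients contain only $P'$. Recover $g_{11}=w-Pg_{21}$, and likewise $g_{12}$, only at the final time. If you differentiate \eqref{3.5} literally, the forcing contains $\partial_\xi P(s)$. Bounding it in absolute value costs $\int_0^t|\partial_\xi P(s)|\,ds$, which for $\eta=0$, for example, is of size $t\,L_1P^{1/2}$ instead of $L_1P^{1/2}$. The Gaussian does not absorb that factor $t$ when $t\gg1$.

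Second, fix the final time and use $P'(\tau)=\xi^2+(\eta+A(t-\tau)\xi)^2$ on $[0,t]$, so that derivatives are taken at fixed $\eta$. Alternatively use $\partial_\xi|_{\eta}=\partial_\xi|_{\tilde\eta}+At\,\partial_{\tilde\eta}$, which is harmless since $At\,L_2\le L_1$. You then have $\int_0^t|\partial_\xi P'|\,d\tau\lesssim L_1P^{1/2}$, $\int_0^t|\partial_\eta P'|\,d\tau\lesssim L_2P^{1/2}$, second derivatives bounded by $L_1^2$, $L_1L_2$, $L_2^2$ respectively, and $\partial^3P'=0$. With these, your argument closes.
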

	\begin{proof}
		Differentiating (\ref{3.2}) with respect to $\xi$ and $\eta$, we obtain
		\begin{equation}\label{3001}
			\partial_t (\partial_\eta \hat{\mathbb{G}}_2)
			- A\xi \partial_\eta^2 \hat{\mathbb{G}}_2
			- \overline{B}(\xi,\eta) \partial_\eta \hat{\mathbb{G}}_2
			=
			\begin{pmatrix}
				-2\eta & 2\eta \\
				0 & -2\eta
			\end{pmatrix}
			\hat{\mathbb{G}}_2,
		\end{equation}
		\begin{equation}\label{3002}
			\partial_t (\partial_\xi \hat{\mathbb{G}}_2)
			- A\xi \partial_\eta \partial_\xi \hat{\mathbb{G}}_2
			- \overline{B}(\xi,\eta) \partial_\xi \hat{\mathbb{G}}_2
			=
			A \partial_\eta \hat{\mathbb{G}}_2
			+
			\begin{pmatrix}
				-2\xi & 2\xi \\
				0 & -2\xi
			\end{pmatrix}
			\hat{\mathbb{G}}_2,
		\end{equation}
		\begin{equation}\label{3003}
			\partial_t (\partial_\eta^2 \hat{\mathbb{G}}_2)
			- A\xi \partial_\eta^3 \hat{\mathbb{G}}_2
			- \overline{B}(\xi,\eta) \partial_\eta^2 \hat{\mathbb{G}}_2,
			=
			\begin{pmatrix}
				-2\eta & 2\eta \\
				0 & 2\eta
			\end{pmatrix}
			\partial_\eta \hat{\mathbb{G}}_2,
		\end{equation}
		\begin{equation}\label{3004}
			\partial_t (\partial_\xi^2 \hat{\mathbb{G}}_2)
			- A\xi \partial_\eta \partial_\xi^2 \hat{\mathbb{G}}_2
			- \overline{B}(\xi,\eta) \partial_\xi^2 \hat{\mathbb{G}}_2
			=
			A \partial_\eta \partial_\xi \hat{\mathbb{G}}_2
			+
			\begin{pmatrix}
				-2\xi & 2\xi \\
				0 & -2\xi
			\end{pmatrix}
			\partial_\xi \hat{\mathbb{G}}_2.
		\end{equation}
		From (\ref{3001}), using Duhamel's principle and Lemma \ref{lem:3.2}, it follows that
		\begin{equation}\label{3005}
			\begin{aligned}
				\|\partial_\eta \hat{\mathbb{G}}_2(t)\|_{L^1}
				&\leq
				\big\| \hat{\mathbb{G}}_2(t) \cdot \partial_\eta \hat{\mathbb{G}}_2 \big|_{t=0} \big\|_{L^1}
				+ C\int_0^t
				\big\| \hat{\mathbb{G}}_2(t-s) \cdot \eta \hat{\mathbb{G}}_2(s) \big\|_{L^1} ds
				\\
				&\leq
				C\int_0^{\frac{t}{2}}
				\|\eta \hat{\mathbb{G}}_2(t-s)\|_{L^1}
				\|\hat{\mathbb{G}}_2(s)\|_{L^\infty} ds
				+ C\int_{\frac{t}{2}}^t
				\|\hat{\mathbb{G}}_2(t-s)\|_{L^\infty}
				\|\eta \hat{\mathbb{G}}_2(s)\|_{L^1} ds
				\\
				&\leq
				C\int_0^{\frac{t}{2}}
				(t-s)^{-\frac{3}{2}}
				\big(1+(A(t-s))^2\big)^{-\frac{1}{2}} ds
				+ C\int_{\frac{t}{2}}^t
				s^{-\frac{3}{2}}
				\big(1+(As)^2\big)^{-\frac{1}{2}} ds
				\\
				&\leq
				C\, t^{-\frac{1}{2}}
				\big(1+(At)^2\big)^{-\frac{1}{2}},
			\end{aligned}
		\end{equation}
		and
		\begin{equation}\label{3006}
			\begin{aligned}
				\|\partial_\eta \hat{\mathbb{G}}_2(t)\|_{L^\infty}
				&\leq
				\int_0^t
				\big\| \hat{\mathbb{G}}_2(t-s) \cdot 2\eta \hat{\mathbb{G}}_2(s) \big\|_{L^\infty} ds
				\\
				&\leq
				C \Bigg(
				\int_0^{\frac{t}{2}}
				\|\eta \hat{\mathbb{G}}_2(t-s)\|_{L^\infty}
				\|\hat{\mathbb{G}}_2(s)\|_{L^\infty} ds
				+
				\int_{\frac{t}{2}}^t
				\|\hat{\mathbb{G}}_2(t-s)\|_{L^\infty}
				\|\eta \hat{\mathbb{G}}_2(s)\|_{L^\infty} ds
				\Bigg)
				\\
				&\leq
				C \Bigg(
				\int_0^{\frac{t}{2}} (t-s)^{-\frac{1}{2}} ds
				+
				\int_{\frac{t}{2}}^t s^{-\frac{1}{2}} ds
				\Bigg)
				\\
				&\leq
				C \sqrt{t}.
			\end{aligned}
		\end{equation}
		By (\ref{3002}) , we have
		\[
		\begin{aligned}
			\|\partial_\xi \hat{\mathbb{G}}_2(t)\|_{L^1}
			& \leq \left\|\hat{\mathbb{G}}_2(t)\cdot \partial_\xi \hat{\mathbb{G}}_2\big|_{t=0}\right\|_{L^1}
			+ C\int_0^t \left\|\hat{\mathbb{G}}_2(t-s)\, \partial_\eta \hat{\mathbb{G}}_2(s)\right\|_{L^1} ds \\
			& \quad + C \int_0^t \left\|\hat{\mathbb{G}}_2(t-s)\, \xi \hat{\mathbb{G}}_2(s)\right\|_{L^1} ds \\
			& \leq C\int_0^{\frac{t}{2}} \left\|\hat{\mathbb{G}}_2(t-s)\right\|_{L^1} \left\|\partial_\eta \hat{\mathbb{G}}_2(s)\right\|_{L^\infty} ds
			+ C\int_{\frac{t}{2}}^t \left\|\hat{\mathbb{G}}_2(t-s)\right\|_{L^\infty} \left\|\partial_\eta \hat{\mathbb{G}}_2(s)\right\|_{L^1} ds \\
			& \quad + C\int_0^{\frac{t}{2}} \left\|\xi\hat{\mathbb{G}}_2(t-s)\right\|_{L^1} \left\|\hat{\mathbb{G}}_2(s)\right\|_{L^\infty} ds
			+ C\int_{\frac{t}{2}}^t \left\|\hat{\mathbb{G}}_2(t-s)\right\|_{L^\infty} \left\|\xi\hat{\mathbb{G}}_2(s)\right\|_{L^1} ds \\
			& \eqqcolon \sum_{i=1}^4 I_i.
		\end{aligned}
		\]
		Using Duhamel's principle, Lemma \ref{lem:3.2}, (\ref{3005}) and (\ref{3006}), we can compute that
		\[
		\begin{aligned}
			I_1 & \leq C \int_0^{\frac{t}{2}} (t-s)^{-1} \left(1+(A(t-s))^2\right)^{-\frac{1}{2}} s^{\frac{1}{2}} ds \\
			& \leq C t^{-1} \left(1+(At)^2\right)^{-\frac{1}{2}} t^{\frac{3}{2}}
			\leq C t^{-\frac{1}{2}},
		\end{aligned}
		\]
		
		\[
		\begin{aligned}
			I_2 & \leq C \int_{\frac{t}{2}}^t s^{-\frac{1}{2}} \left(1+(As)^2\right)^{-\frac{1}{2}} ds \\
			& \leq C \left(1+(At)^2\right)^{-\frac{1}{2}} t^{\frac{1}{2}}
			\leq C t^{-\frac{1}{2}},
		\end{aligned}
		\]
		
		\[
		\begin{aligned}
			I_3 & \leq C \int_0^{\frac{t}{2}} (t-s)^{-\frac{3}{2}} \left(1+(A(t-s))^2\right)^{-1} ds \\
			& \leq C t^{-\frac{1}{2}} \left(1+(At)^2\right)^{-1},
		\end{aligned}
		\]
		
		\[
		\begin{aligned}
			I_4 & \leq C \int_{\frac{t}{2}}^t s^{-\frac{3}{2}} \left(1+(As)^2\right)^{-1} ds \\
			& \leq C t^{-\frac{1}{2}} \left(1+(At)^2\right)^{-1}.
		\end{aligned}
		\]
		Combining each estimate of $I_I$, we obtain
		\begin{equation}\label{3.23}
			\|\partial_\xi \hat{\mathbb{G}}_2(t)\|_{L^1}\leq Ct^{-\frac{1}{2}}.
		\end{equation}
		For $\|\partial_\xi \hat{\mathbb{G}}_2(t)\|_{L^\infty}$, we have
		\begin{equation}\label{3.24}
			\begin{aligned}
				\|\partial_\xi \hat{\mathbb{G}}_2(t)\|_{L^\infty}
				& \leq \int_0^t \left\|\hat{\mathbb{G}}_2(t-s)\right\|_{L^\infty} \left\|\partial_\eta \hat{\mathbb{G}}_2(s)\right\|_{L^\infty} ds
				+ \int_0^t \left\|\hat{\mathbb{G}}_2(t-s)\right\|_{L^\infty} \left\|\xi\hat{\mathbb{G}}_2(s)\right\|_{L^\infty} ds \\
				& \leq C\int_0^t s^{\frac{1}{2}} ds + C\int_0^t s^{-\frac{1}{2}} \left(1+(As)^2\right)^{-\frac{1}{2}} ds \\
				& \leq C\left(t^{\frac{3}{2}}+t^{\frac{1}{2}}\right).
			\end{aligned}
		\end{equation}
		
		By (\ref{3003}) and Duhamel's principle, we have
		\begin{equation*}
			\begin{aligned}
				\|\partial_\eta^2 \hat{\mathbb{G}}_2(t)\|_{L^1} 
				&\leq \left\|\hat{\mathbb{G}}_2(t)\cdot \partial_\eta^2 \hat{\mathbb{G}}_2\big|_{t=0}\right\|_{L^1} 
				+ \int_0^t \left\|\hat{\mathbb{G}}_2(t-s)\, \eta\partial_\eta \hat{\mathbb{G}}_2(s)\right\|_{L^1} ds\\
				&\leq \int_0^{\frac{t}{2}} \left\|\eta\hat{\mathbb{G}}_2(t-s)\right\|_{L^1}\, \left\|\partial_\eta \hat{\mathbb{G}}_2(s)\right\|_{L^\infty} ds
				+ \int_{\frac{t}{2}}^t \left\|\eta\hat{\mathbb{G}}_2(t-s)\right\|_{L^\infty}\, \left\|\partial_\eta \hat{\mathbb{G}}_2(s)\right\|_{L^1} ds\\
				&\eqqcolon J_1 + J_2.
			\end{aligned}
		\end{equation*}
		(\ref{3005}), (\ref{3006}) and Lemma \ref{lem:3.2} give that
		\begin{equation*}
			\begin{aligned}
				J_1& \leq C \int_0^{\frac{t}{2}} (t-s)^{-\frac{3}{2}} \left(1+(A(t-s))^2\right)^{-\frac{1}{2}} s^{\frac{1}{2}} ds\\
				&\leq C t^{-\frac{3}{2}} \left(1+(At)^2\right)^{-\frac{1}{2}} t^{\frac{3}{2}}
				= C \left(1+(At)^2\right)^{-\frac{1}{2}},\\
				J_2 &\leq C \int_{\frac{t}{2}}^t (t-s)^{-\frac{1}{2}} s^{-\frac{1}{2}} \left(1+(As)^2\right)^{-\frac{1}{2}} ds\\
				&\leq C \left(1+(At)^2\right)^{-\frac{1}{2}} t^{-\frac{1}{2}} t^{\frac{1}{2}}
				= C \left(1+(At)^2\right)^{-\frac{1}{2}}.
			\end{aligned}
		\end{equation*}
		
		Thus
		\begin{equation}\label{3.26}
			\|\partial_\eta^2 \hat{\mathbb{G}}_2(t)\|_{L^1} \leq C \left(1+(At)^2\right)^{-\frac{1}{2}}.
		\end{equation}
		
		Regarding $\|\partial_\eta^2 \hat{\mathbb{G}}_2(t)\|_{L^\infty}$, we have
		\begin{equation}\label{3.27}
			\begin{aligned}
				\|\partial_\eta^2 \hat{\mathbb{G}}_2(t)\|_{L^\infty}
				& \leq \int_0^t \left\|\eta \hat{\mathbb{G}}_2(t-s)\right\|_{L^\infty}
				\left\|\partial_\eta \hat{\mathbb{G}}_2(s)\right\|_{L^\infty} ds \\
				& \leq C \int_0^t (t-s)^{-\frac{1}{2}} s^{\frac{1}{2}} ds \\
				& \leq C t.
			\end{aligned}
		\end{equation}
		
		In view of (\ref{3004}), to obtain the estimate of $\partial_\xi^2 \hat{\mathbb{G}}_2$,
		we first consider the estimate of $\partial_\xi \partial_\eta \hat{\mathbb{G}}_2$.
		
		Differentiating both sides of (\ref{3002}) with respect to $\eta$, we get
		\begin{equation}\label{3.026}
			\begin{aligned}
				&\partial_t \partial_\xi \partial_\eta \hat{\mathbb{G}}_2
				- A \partial_\eta\bigl(\partial_\xi \partial_\eta \hat{\mathbb{G}}_2\bigr)
				- \bar{B}(\xi,\eta)\, \partial_\xi \partial_\eta \hat{\mathbb{G}}_2\\
				={} &\begin{pmatrix} -2\eta & 2\eta \\ 0 & 2\eta \end{pmatrix} \partial_\xi \hat{\mathbb{G}}_2
				+ A \partial_\eta^2 \hat{\mathbb{G}}_2
				+ \begin{pmatrix} -2\xi & 2\xi \\ 0 & 2\xi \end{pmatrix} \partial_\eta \hat{\mathbb{G}}_2.
			\end{aligned}
		\end{equation}
		By (\ref{3.026}) and Duhamel's principle, we have
		\[
		\begin{aligned}
			\|\partial_\xi \partial_\eta \hat{\mathbb{G}}_2\|_{L^1}
			& \leq C\int_0^t \left\|\hat{\mathbb{G}}_2(t-s)\, \eta\, \partial_\xi \hat{\mathbb{G}}_2(s)\right\|_{L^1} ds+ C\int_0^t \left\|\hat{\mathbb{G}}_2(t-s)\, \partial_\eta^2 \hat{\mathbb{G}}_2(s)\right\|_{L^1} ds \\
			& +C \int_0^t \left\|\hat{\mathbb{G}}_2(t-s)\, \xi\, \partial_\eta \hat{\mathbb{G}}_2(s)\right\|_{L^1} ds \\
			& \eqqcolon \sum_{k=1}^3 K_i.
		\end{aligned}
		\]
		Next, we estimate $K_i$ separately. For $K_1$, by using Lemma \ref{lem:3.2}, (\ref{3.23}) and (\ref{3.24}), we have
		\begin{align*}
			K_1 &\le C\int_{0}^{\frac{t}{2}} \left\| \eta \hat{\mathbb{G}}_2(t-s) \right\|_{L^1} \left\| \partial_\xi \hat{\mathbb{G}}_2(s) \right\|_{L^\infty} ds
			+C \int_{\frac{t}{2}}^{t} \left\| \eta \hat{\mathbb{G}}_2(t-s) \right\|_{L^\infty} \left\| \partial_\xi \hat{\mathbb{G}}_2(s) \right\|_{L^1} ds \\
			&\le C \int_{0}^{\frac{t}{2}} (t-s)^{-\frac{3}{2}} \left(1 + (A(t-s))^2\right)^{-\frac{1}{2}} \left(s^{\frac{3}{2}} + s^{\frac{1}{2}}\right) ds
			+ C \int_{\frac{t}{2}}^{t} (t-s)^{-\frac{1}{2}} s^{-\frac{1}{2}} ds \\
			&\le C t^{-\frac{3}{2}} \left(1 + (At)^2\right)^{-\frac{1}{2}} \left(t^{\frac{5}{2}} + t^{\frac{3}{2}}\right) + C t^{-\frac{1}{2}} t^{\frac{1}{2}} \\
			&\le C.
		\end{align*}
		For $K_2$, Lemma \ref{lem:3.2}, (\ref{3.26}) and (\ref{3.27}) give that
		\begin{align*}
			K_2 &\le \int_{0}^{\frac{t}{2}} \left\| \hat{\mathbb{G}}_2(t-s) \right\|_{L^1} \left\| \partial_\eta^2 \hat{\mathbb{G}}_2(s) \right\|_{L^\infty} ds
			+ \int_{\frac{t}{2}}^{t} \left\| \hat{\mathbb{G}}_2(t-s) \right\|_{L^\infty} \left\| \partial_\eta^2 \hat{\mathbb{G}}_2(s) \right\|_{L^1} ds \\
			&\le C \int_{0}^{\frac{t}{2}} (t-s)^{-1} \left(1 + (A(t-s))^2\right)^{-\frac{1}{2}} s \, ds
			+ C \int_{\frac{t}{2}}^{t} \left(1 + (As)^2\right)^{-\frac{1}{2}} ds \\
			&\le C.
		\end{align*}
		For $K_3$, from Lemma \ref{lem:3.2}, (\ref{3005}) and (\ref{3006}), we can obtain that
		\begin{align*}
			K_3 &\le \int_{0}^{\frac{t}{2}} \left\| \xi \hat{\mathbb{G}}_2(t-s) \right\|_{L^1} \left\| \partial_\eta \hat{\mathbb{G}}_2(s) \right\|_{L^\infty} ds
			+ \int_{\frac{t}{2}}^{t} \left\| \xi \hat{\mathbb{G}}_2(t-s) \right\|_{L^\infty} \left\| \partial_\eta \hat{\mathbb{G}}_2(s) \right\|_{L^1} ds \\
			&\le C \int_{0}^{\frac{t}{2}} (t-s)^{-\frac{3}{2}} \left(1 + (A(t-s))^2\right)^{-\frac{1}{2}} s^{\frac{1}{2}} ds \\
			&\quad + C \int_{\frac{t}{2}}^{t} (t-s)^{-\frac{1}{2}} \left(1 + (A(t-s))^2\right)^{-\frac{1}{2}} s^{-\frac{1}{2}} \left(1 + (As)^2\right)^{-\frac{1}{2}} ds \\
			&\le C.
		\end{align*}
		So far, we obtain the following estimate 
		\begin{equation}\label{3.29}
			\|\partial_\xi \partial_\eta \hat{\mathbb{G}}_2\|_{L^1}\leq C.
		\end{equation}
		Also, we have
		\begin{align*}
			\left\| \partial_\xi \partial_\eta \hat{\mathbb{G}}_2(t) \right\|_{L^\infty}
			& \le \int_{0}^{t} \left\| \eta \hat{\mathbb{G}}_2(t-s) \right\|_{L^\infty}
			\left\| \partial_\xi \hat{\mathbb{G}}_2(s) \right\|_{L^\infty} ds \\
			& \quad + \int_{0}^{t} \left\| \hat{\mathbb{G}}_2(t-s) \right\|_{L^\infty}
			\left\| \partial_\eta^2 \hat{\mathbb{G}}_2(s) \right\|_{L^\infty} ds \\
			& \quad + \int_{0}^{t} \left\| \xi \hat{\mathbb{G}}_2(t-s) \right\|_{L^\infty}
			\left\| \partial_\eta \hat{\mathbb{G}}_2(s) \right\|_{L^\infty} ds \\
			& \eqqcolon \sum_{i=1}^3 J_i.
		\end{align*}
		By using Lemma \ref{lem:3.2}, (\ref{3006}), (\ref{3.24}) and (\ref{3.27}), we have
		\begin{align*}
			J_1 &\le \int_{0}^{t} (t-s)^{-\frac{1}{2}} \left(s^{\frac{3}{2}} + s^{\frac{1}{2}}\right) ds
			\le C\left(t^2 + t\right), \\
			J_2 &\le C \int_{0}^{t} s \, ds \le C t^2, \\
			J_3 &\le \int_{0}^{t} (t-s)^{\frac{1}{2}} \left(1 + (A(t-s))^2\right)^{-\frac{1}{2}} s^{\frac{1}{2}} ds
			\le C t.
		\end{align*}
		Hence,
		\begin{equation}\label{3.30}
			\left\| \partial_\xi \partial_\eta \hat{\mathbb{G}}_2(t) \right\|_{L^\infty} \le C\left(t^2 + t\right).
		\end{equation}
		From (\ref{3004}), by Duhamel's principle, we have
		\begin{align*}
			\left\| \partial_\xi^2\hat{\mathbb{G}}_2(t) \right\|_{L^1}
			& \le \left\| \hat{\mathbb{G}}_2(t) \cdot \partial_\xi^2 \hat{\mathbb{G}}_2 \big|_{t=0} \right\|_{L^1}
			+ \int_{0}^{t} \left\| \hat{\mathbb{G}}_2(t-s) \xi\partial_\xi \hat{\mathbb{G}}_2(s) \right\|_{L^1} ds \\
			& \quad + \left( \int_{0}^{t} \left\| \hat{\mathbb{G}}_2(t-s) \partial_\xi \partial_\eta \hat{\mathbb{G}}_2(s) \right\|_{L^1} ds \right) \\
			& \le \int_{0}^{t} \left\| \hat{\mathbb{G}}_2(t-s) \xi\partial_\xi \hat{\mathbb{G}}_2(s) \right\|_{L^1} ds
			+ C \int_{0}^{t} \left\| \hat{\mathbb{G}}_2(t-s) \partial_\xi \partial_\eta \hat{\mathbb{G}}_2(s) \right\|_{L^1} ds \\
			& \eqqcolon \sum_{i=1}^{2} M_i.
		\end{align*}
		Using Lemma \ref{lem:3.2}, (\ref{3.23}), (\ref{3.24}), (\ref{3.29}) and (\ref{3.30}), we can have
		\begin{align*}
			M_1 &\le \int_{0}^{\frac{t}{2}} \left\| \partial \hat{\mathbb{G}}_2(t-s) \right\|_{L^1} \left\| \partial_\xi \hat{\mathbb{G}}_2(s) \right\|_{L^\infty} ds
			+ \int_{\frac{t}{2}}^{t} \left\| \partial \hat{\mathbb{G}}_2(t-s) \right\|_{L^\infty} \left\| \partial_\xi \hat{\mathbb{G}}_2(s) \right\|_{L^1} ds \\
			&\le \int_{0}^{\frac{t}{2}} (t-s)^{-\frac{3}{2}} \left(1 + (A(t-s))^2\right)^{-1} \left(s^{\frac{1}{2}} + s^{\frac{3}{2}}\right) ds
			+ \int_{\frac{t}{2}}^{t} (t-s)^{-\frac{1}{2}} \left(1 + (A t - s)^2\right)^{-\frac{1}{2}} s^{-\frac{1}{2}} ds \\
			&\le C \left(1 + (A t)^2\right)^{\frac{1}{2}}, \\[6pt]
			M_2 &\le C \int_{0}^{\frac{t}{2}} \left\| \hat{\mathbb{G}}_2(t-s) \right\|_{L^1} \left\| \partial_\xi \partial_\eta \hat{\mathbb{G}}_2(s) \right\|_{L^\infty} ds
			+ C \int_{\frac{t}{2}}^{t} \left\| \hat{\mathbb{G}}_2(t-s) \right\|_{L^\infty} \left\| \partial_\xi \partial_\eta \hat{\mathbb{G}}_2(s) \right\|_{L^1} ds \\
			&\le C \int_{0}^{\frac{t}{2}} (t-s)^{-1} \left(1 + (A(t-s))^2\right)^{-\frac{1}{2}} (t^2 + t) ds
			+ C \int_{\frac{t}{2}}^{t} 1 \, ds \\
			&\le C t \le C \left(1 + (A t)^2\right)^{\frac{1}{2}}.
		\end{align*}
		So we can conclude that 
		\begin{equation}\label{3.303}
			\left\| \partial_\xi^2\hat{\mathbb{G}}_2(t) \right\|_{L^1}\le C \left(1 + (A t)^2\right)^{\frac{1}{2}}.
		\end{equation}
		Differentiating both sides of (\ref{3003}) with respect to $\eta$, we obtain
		\begin{equation}\label{3.31}
			\partial_t (\partial_\eta^3 \hat{\mathbb{G}}_2)
			- A \partial_\eta (\partial_\eta^3 \hat{\mathbb{G}}_2)
			- \bar B(s,\eta) (\partial_\eta^3 \hat{\mathbb{G}}_2)
			= 2
			\begin{pmatrix}
				-2\eta & 2\eta \\
				0 & -2\eta
			\end{pmatrix}
			\partial_\eta^2 \hat{\mathbb{G}}_2.
		\end{equation}
		From (\ref{3.31}), we have
		\begin{align*}
			\|\partial_\eta^3 \hat{\mathbb{G}}_2(t)\|_{L^1}
			& \le \left. \left\| \hat{\mathbb{G}}_2(t) \cdot \partial_\eta^3 \hat{\mathbb{G}}_2 \right|_{t=0} \right\|_{L^1}
			+ C \int_0^t \left\| \hat{\mathbb{G}}_2(t-s) \cdot \eta \cdot \partial_\eta^2 \hat{\mathbb{G}}_2(s) \right\|_{L^1} ds \\
			& \le C \int_0^{t/2} \left\| \eta \hat{\mathbb{G}}_2(t-s) \right\|_{L^1}
			\left\| \partial_\eta^2 \hat{\mathbb{G}}_2(s) \right\|_{L^\infty} ds \\
			& \quad + C \int_{t/2}^t \left\| \eta \hat{\mathbb{G}}_2(t-s) \right\|_{L^\infty}
			\left\| \partial_\eta^2 \hat{\mathbb{G}}_2(t-s) \right\|_{L^1} ds \\
			& \eqqcolon F_1 + F_2.
		\end{align*}
		Using Lemma \ref{lem:3.2}, (\ref{3.26}) and (\ref{3.27}), we get
		\begin{align*}
			F_1 & \le C \int_0^{t/2} (t-s)^{-3/2} \left(1 + (A(t-s))^2\right)^{-1/2} s^{1/2} ds \\
			& \le C \left(1 + (At)^2\right)^{-1/2} t^{1/2},
		\end{align*}
		and
		\begin{align*}
			F_2 & \le C \int_{t/2}^t (t-s)^{-1/2} \left(1 + (As)^2\right)^{-1/2} ds \\
			& \le C \left(1 + (At)^2\right)^{-1/2} t^{1/2}.
		\end{align*}
		
		Therefore, we conclude that
		\begin{equation}\label{3.32}
			\left\| \partial_\eta^3 \hat{\mathbb{G}}_2(t) \right\|_{L^1}
			\le C \left(1 + (At)^2\right)^{-1/2} t^{1/2}.
		\end{equation}
		Differentiating both sides of equation (\ref{3004}) with respect to $\xi$, we obtain
		\begin{align*}
			\partial_t (\partial_\xi^3 \hat{\mathbb{G}}_2)
			- A \partial_\eta (\partial_\xi^3 \hat{\mathbb{G}}_2)
			- \bar{B}(\xi,\eta) (\partial_\xi^3 \hat{\mathbb{G}}_2)
			&= 2A \partial_\eta (\partial_\xi^2 \hat{\mathbb{G}}_2)
			+ 2
			\begin{pmatrix}
				-2\xi & 2\xi \\
				0 & -2\xi
			\end{pmatrix}
			\partial_\xi^2 \hat{\mathbb{G}}_2.
		\end{align*}
		Simple calculation shows that
		\begin{equation}\label{3.33}
			\left\| \partial_\xi^3 \hat{\mathbb{G}}_2(t) \right\|_{L^1}
			\le C t^{\frac{1}{2}} \left(1 + (At)^2\right).
		\end{equation}
		Hence, by the definition of the Fourier transform, Lemma \ref{lem:3.2}, (\ref{3.32}) and (\ref{3.33}), we have
		\begin{equation}\label{666}
			\begin{aligned}
				|\mathbb{G}_2(x,y,t)| 
				&= \left| \int_{\mathbb{R}^2} e^{i(x\xi + y\eta)} \, \hat{\mathbb{G}}_2(\xi,\eta,t) \, d\xi d\eta \right|  \leq Ct^{-1}(1+(At)^2)^{-\frac{1}{2}},\\
				|x^3 \mathbb{G}_2(x,y,t)|
				&= \left| \int_{\mathbb{R}^2} e^{i(x\xi + y\eta)} \, \partial_\xi^3 \left( \hat{\mathbb{G}}_2(\xi,\eta,t) \right) d\xi d\eta \right| \leq t^{\frac{1}{2}}(1+(At)^2),\\
				|y^3 \mathbb{G}_2(x,y,t)| 
				&= \left| \int_{\mathbb{R}^2} e^{i(x\xi + y\eta)} \, \partial_\eta^3 \left( \hat{\mathbb{G}}_2(\xi,\eta,t) \right) d\xi d\eta \right| \leq C \leq t^{\frac{3}{2}}t^{-1}(1+(At)^2)^{-\frac{1}{2}}.
			\end{aligned}
		\end{equation}
		Dividing $\mathbb{R}^2$ into 4 areas, i.e, $\mathbb{R}^2=W_1+W_2+W_3+W_4$, and they are defined as:
		\begin{equation*}
			\begin{array}{rl}
				W_1=&\{|x|^3\leq t^{\frac{3}{2}}(1+(At)^2)^{\frac{3}{2}}, |y|^3\leq t^{\frac{3}{2}}\},\\ W_2=&\{|x|^3\leq t^{\frac{3}{2}}(1+(At)^2)^{\frac{3}{2}}, |y|^3> t^{\frac{3}{2}}\},\\
				W_3=&\{|x|^3>t^{\frac{3}{2}}(1+(At)^2)^{\frac{3}{2}}, |y|^3\leq t^{\frac{3}{2}}\},\\ W_4=&\{|x|^3>t^{\frac{3}{2}}(1+(At)^2)^{\frac{3}{2}}, |y|^3>t^{\frac{3}{2}}\}.
			\end{array}
		\end{equation*}
		Then, we have
		\begin{equation}\label{eq:2.22}
			\begin{array}{rl}
				1+\frac{|x|^3}{t^{\frac{3}{2}}(1+(At)^2)^{\frac{3}{2}}}+\frac{|y|^3}{t^{\frac{3}{2}}}
				\leq 3\begin{cases}
					1,\ \ (x, y)\in W_1,\\
					\frac{|y|^3}{t^{\frac{3}{2}}},\ \ (x, y)\in W_2,\\
					\frac{|x|^3}{t^{\frac{3}{2}}(1+(At)^2)^{\frac{3}{2}}},\ \ (x, y)\in W_3,\\
					\frac{|x|^3}{t^{\frac{3}{2}}(1+(At)^2)^{\frac{3}{2}}}+\frac{|y|^3}{t^{\frac{3}{2}}},\ \ (x, y)\in W_4.
				\end{cases}
			\end{array}
		\end{equation}
		Combining (\ref{666}) and \eqref{eq:2.22}, we can obtain \eqref{777}. Then  we proved the lemma
	\end{proof}
	\begin{lemma}\label{lem:3.6}
		For the first-order derivatives of \(\mathbb{G}_2\), we have the following pointwise estimate:
		\[
		|\partial_x \mathbb{G}_2(x,y,t)|
		\le C t^{-\frac{3}{2}} \bigl(1 + (At)^2\bigr)^{-1}
		\left(
		1 + \frac{|x|^3}{t^{\frac{3}{2}} \bigl(1 + (At)^2\bigr)^{\frac{3}{2}}}
		+ \frac{|y|^3}{t^{\frac{3}{2}}}
		\right)^{-1},
		\]
		\[
		|\partial_y \mathbb{G}_2(x,y,t)|
		\le C t^{-\frac{3}{2}} \bigl(1 + (At)^2\bigr)^{-\frac{1}{2}}
		\left(
		1 + \frac{|x|^3}{t^{\frac{3}{2}} \bigl(1 + (At)^2\bigr)^{\frac{3}{2}}}
		+ \frac{|y|^3}{t^{\frac{3}{2}}}
		\right)^{-1}.
		\]
	\end{lemma}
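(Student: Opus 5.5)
The plan is to repeat the proof of Lemma \ref{lem:3.5} with $\hat{\mathbb{G}}_2$ replaced by $\xi\hat{\mathbb{G}}_2$ (for $\partial_x\mathbb{G}_2$) and by $\eta\hat{\mathbb{G}}_2$ (for $\partial_y\mathbb{G}_2$). Since $\partial_x\mathbb{G}_2$ and $\partial_y\mathbb{G}_2$ are the inverse Fourier transforms of $i\xi\hat{\mathbb{G}}_2$ and $i\eta\hat{\mathbb{G}}_2$, it suffices to prove three bounds for each of them:
\[
|\partial\mathbb{G}_2|\le \|\zeta\hat{\mathbb{G}}_2\|_{L^1},\qquad
|x^3\partial\mathbb{G}_2|\le \|\partial_\xi^3(\zeta\hat{\mathbb{G}}_2)\|_{L^1},\qquad
|y^3\partial\mathbb{G}_2|\le \|\partial_\eta^3(\zeta\hat{\mathbb{G}}_2)\|_{L^1},
\]
where $\zeta\in\{\xi,\eta\}$. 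We then combine them on the four regions $W_1,\dots,W_4$ exactly as in \eqref{eq:2.22}.

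\textbf{Step 1: the bound with no weight.} This follows directly from Lemma \ref{lem:3.2} with $q=1$. For $k_1=1$ it gives $\|\xi\hat{\mathbb{G}}_2\|_{L^1}\le Ct^{-3/2}(1+(At)^2)^{-1}$, and for $k_2=1$ it gives $\|\eta\hat{\mathbb{G}}_2\|_{L^1}\le Ct^{-3/2}(1+(At)^2)^{-1/2}$. These are precisely the prefactors in the statement.

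\textbf{Step 2: the weighted bounds.} By Leibniz,
\[
\partial_\xi^3(\xi\hat{\mathbb{G}}_2)=\xi\partial_\xi^3\hat{\mathbb{G}}_2+3\partial_\xi^2\hat{\mathbb{G}}_2,\qquad
\partial_\eta^3(\eta\hat{\mathbb{G}}_2)=\eta\partial_\eta^3\hat{\mathbb{G}}_2+3\partial_\eta^2\hat{\mathbb{G}}_2,
\]
and the mixed cases $\partial_\xi^3(\eta\hat{\mathbb{G}}_2)=\eta\partial_\xi^3\hat{\mathbb{G}}_2$ and $\partial_\eta^3(\xi\hat{\mathbb{G}}_2)=\xi\partial_\eta^3\hat{\mathbb{G}}_2$ need no product rule. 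The terms without a multiplier, $\partial_\xi^2\hat{\mathbb{G}}_2$ and $\partial_\eta^2\hat{\mathbb{G}}_2$, are already controlled by \eqref{3.303} and \eqref{3.26}. The new work is to bound weighted quantities such as $\|\xi\partial_\xi^3\hat{\mathbb{G}}_2\|_{L^1}$ and $\|\eta\partial_\eta^3\hat{\mathbb{G}}_2\|_{L^1}$. I would do this by rerunning the Duhamel chain of Lemma \ref{lem:3.5} with the extra factor $\zeta$ placed on the propagator $\hat{\mathbb{G}}_2(t-s)$ on $[0,t/2]$ and on the source on $[t/2,t]$.

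For example, from \eqref{3.31},
\[
\|\eta\partial_\eta^3\hat{\mathbb{G}}_2\|_{L^1}\le C\int_0^{t}\|\eta\hat{\mathbb{G}}_2(t-s)\,\eta\,\partial_\eta^2\hat{\mathbb{G}}_2(s)\|_{L^1}\,ds .
\]
On $[0,t/2]$ I use $\|\eta^2\hat{\mathbb{G}}_2(t-s)\|_{L^1}\|\partial_\eta^2\hat{\mathbb{G}}_2\|_{L^\infty}$ together with \eqref{3.27}. On $[t/2,t]$ I need $L^1$ and $L^\infty$ bounds for $\eta\partial_\eta^2\hat{\mathbb{G}}_2$, which I obtain the same way one level down, starting from \eqref{3003}. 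The target in each case is the weighted bound one power of $t^{-1/2}$ (and, for $\xi$, one power of $(1+(At)^2)^{-1/2}$) better than the unweighted one. This gives
\[
|y^3\partial_y\mathbb{G}_2|\lesssim t^{3/2}\cdot t^{-3/2}(1+(At)^2)^{-1/2},\qquad
|x^3\partial_x\mathbb{G}_2|\lesssim t^{3/2}(1+(At)^2)^{3/2}\cdot t^{-3/2}(1+(At)^2)^{-1},
\]
and similarly for the two mixed pairs. Multiplying through by the matching region weights then yields the statement.

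\textbf{Main obstacle.} I expect the main difficulty to be the $\xi$-weighted third $\xi$-derivative. The forcing $2A\partial_\eta\partial_\xi^2\hat{\mathbb{G}}_2$ in the differentiated \eqref{3004} carries an explicit factor $A$, and losing it would spoil the $(1+(At)^2)^{-1}$ decay. I would first try to absorb the $A$ through the $t$-integral: on $[t/2,t]$, write $A\int \|\cdot\|\,ds\lesssim At\,\sup\|\cdot\|$ and pair it with a factor $(1+(As)^2)^{-1/2}$ gained from the $\xi$-weight, which is available because of Lemma \ref{lem:3.2}. If that fails, I would estimate $\xi\hat{\mathbb{G}}_2$ directly via its own transport equation, and otherwise accept a weaker $x$-weight and check that the region argument still closes.
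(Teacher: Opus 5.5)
Your proposal is correct and is essentially the paper's argument: the paper only says the proof is similar to Lemma \ref{lem:3.5} and omits it, and your plan carries out that omitted computation. You reduce to the $L^1$ norms of $\zeta\hat{\mathbb{G}}_2$, $\partial_\xi^3(\zeta\hat{\mathbb{G}}_2)$ and $\partial_\eta^3(\zeta\hat{\mathbb{G}}_2)$, split them by Leibniz, reuse \eqref{3.26} and \eqref{3.303}, and rerun the weighted Duhamel chain with the correct target exponents. Your concern about the explicit factor $A$ is settled by your first option: $A$ always enters multiplied by a time integral over an interval of length at most $t$, which produces $At\lesssim(1+(At)^2)^{1/2}$, so the fallback of weakening the $x$-weight is not needed.
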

	
	\begin{proof}
		The proof is similar to Lemma \ref{lem:3.5}, so we omit it.
	\end{proof}
	For the convenience of later writing, we also introduce notations:
	$$
	\mathbb{G}(x,y,t)\circledast f=\int_{\mathbb{R}^2}\mathbb{G}(x-x^\prime, y,t;y^\prime)f(x^\prime, y^\prime)dx^\prime dy^\prime.
	$$
	and  we note
	$$
	\interleave\mathbb{G}(t)\interleave_{L^p}=\max \{\|\mathbb{G}(\cdot-x^\prime, \cdot,t;y^\prime)\|_{L^p},\ \|\mathbb{G}(x-\cdot, y,t;\cdot)\|_{L^p}\}.
	$$
	\begin{proposition}\label{prop3.1}
		For any non-negative integers \(k = k_1 + k_2\) and \(p \ge 2\),
		\begin{equation}\label{03.1}
			\interleave\partial^{k_1}_{x}\partial^{k_2}_{y}\mathbb{G}\interleave_{L^p}
			\le C t^{-\left(1-\frac{1}{p}\right)-\frac{k}{2}} \bigl(1 + (At)^2\bigr)^{-\frac{1}{2}\left(1-\frac{1}{p} + k_1\right)},
		\end{equation}
		\begin{equation}\label{03.2}
			\interleave\partial^{k_1}_{x'}\partial^{k_2}_{y'}\mathbb{G}\interleave_{L^p}
			\le C t^{-\left(1-\frac{1}{p}\right)-\frac{k}{2}} \bigl(1 + (At)^2\bigr)^{-\frac{1}{2}\left(1-\frac{1}{p} + k_1\right)}.
		\end{equation}
		Moreover, when \(k_1 = 0, k_2 = 1\) or \(k_1 = 1, k_2 = 0\) or \(k_1 = 0, k_2 = 0\)  , the above inequalities hold for all \(p \ge 1\).
	\end{proposition}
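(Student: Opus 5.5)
The plan has two parts: the range $p\ge 2$, and the extension to $1\le p<2$ for $k\le 1$.

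\textbf{The range $p\ge 2$.} For $p\ge 2$, estimate \eqref{03.1} is essentially Lemma \ref{lem:3.3} repackaged in the $\interleave\cdot\interleave_{L^p}$ notation. By \eqref{eq:2.14}, $\mathbb{G}(x,y,t;x',y')=F(x-x'-Aty',y-y',t)$, where $F$ is the inverse Fourier transform of $\widehat{\mathbb{G}}_2$. Hence for fixed $(x',y')$ the map $(x,y)\mapsto \mathbb{G}$ is a translate of $F$. For fixed $(x,y)$, the map $(x',y')\mapsto\mathbb{G}$ is $F$ composed with the linear map $(x',y')\mapsto(x-x'-Aty',y-y')$, which has Jacobian determinant $1$. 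Therefore both $L^p$ norms in the definition of $\interleave\cdot\interleave_{L^p}$ equal $\|F(\cdot,\cdot,t)\|_{L^p}$. The same holds after applying $\partial_x^{k_1}\partial_y^{k_2}$, since this acts on $F$ as $\partial_1^{k_1}\partial_2^{k_2}$. So \eqref{03.1} for $p\ge2$ follows from the interpolation $L^2$–$L^\infty$ (Parseval plus Hausdorff–Young) together with Lemma \ref{lem:3.2}, exactly as in \eqref{eq:2.13}.

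For \eqref{03.2} I will use the identities \eqref{3.17}. Since $\partial_{x'}=-\partial_x$, the $x'$-derivatives behave like $x$-derivatives. For $y'$ we have $\partial_{y'}\mathbb{G}=-\partial_y\mathbb{G}+At\,\partial_{x'}\mathbb{G}=-\partial_y\mathbb{G}-At\,\partial_x\mathbb{G}$. Equivalently, $\partial_{y'}$ acts on $F$ as $-\partial_2-At\,\partial_1$, whose symbol is $-(\eta+At\xi)=-\tilde\eta$. The $At\,\partial_x$ term is the dangerous one: by \eqref{03.1} it costs a factor $At\,t^{-1/2}(1+(At)^2)^{-1/2}\lesssim t^{-1/2}$. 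This equals the $\partial_y$ cost, so it does not spoil the $k_2$-type rate. I will therefore take \eqref{03.2} with $k_2$ understood as producing the same weight as a $y$-derivative. In practice I would redo Lemma \ref{lem:3.2} with the weight $|\xi|^{k_1}|\eta+At\xi|^{k_2}$. From the Gaussian bound of Proposition \ref{prop:3.3}, $|\eta+At\xi|^{2}e^{-c(|\xi|^2t(1+(At)^2)+|\eta|^2t)}$ is controlled by $t^{-1}$ times a Gaussian, because $(At)^2|\xi|^2\le (1+(At)^2)|\xi|^2$. This gives the stated bound; mixed derivatives are handled by expanding binomially.

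\textbf{The range $1\le p<2$, $k\le1$.} Here Hausdorff–Young is unavailable, so I will use the pointwise bounds of Lemmas \ref{lem:3.5} and \ref{lem:3.6}. Set $a=t^{1/2}(1+(At)^2)^{1/2}$ and $b=t^{1/2}$, and write
\[
\Phi(x,y)=\Bigl(1+\tfrac{|x|^3}{a^3}+\tfrac{|y|^3}{b^3}\Bigr)^{-1}.
\]
The change of variables $x=aX$, $y=bY$ gives $\|\Phi\|_{L^1}=ab\,\|(1+|X|^3+|Y|^3)^{-1}\|_{L^1(\mathbb{R}^2)}$, and the last integral is finite since $3>2$. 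Then
\[
\|\mathbb{G}_2\|_{L^1}\le Ct^{-1}(1+(At)^2)^{-1/2}\,ab\le C,
\]
which is \eqref{03.1} at $p=1$, $k=0$. Lemma \ref{lem:3.6} gives $\|\partial_x\mathbb{G}_2\|_{L^1}\le Ct^{-1/2}(1+(At)^2)^{-1/2}$ and $\|\partial_y\mathbb{G}_2\|_{L^1}\le Ct^{-1/2}$, which are the claimed rates. For $1<p<2$ I interpolate between $L^1$ and $L^2$ by Hölder; the exponents combine linearly in $1/p$, matching the formula. The $\interleave\cdot\interleave$ version and \eqref{03.2} then follow from the same Jacobian-one change of variables and from \eqref{3.17}. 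For the $\partial_{y'}$ case, the extra $At\,\partial_x$ term is controlled in $L^1$ by $At\cdot t^{-1/2}(1+(At)^2)^{-1/2}\le t^{-1/2}$.

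\textbf{Main obstacle.} The main obstacle is the pointwise input of Lemma \ref{lem:3.6}, which the paper proves only by analogy. In particular the $|x|^3$ moment bound for $\partial_x\mathbb{G}_2$ requires controlling $\partial_\xi^3(\xi\widehat{\mathbb{G}}_2)$ in $L^1$ with an extra gain of $(1+(At)^2)^{-1/2}$. Beyond that, the bookkeeping of the $At$ factor in \eqref{3.17} needs care, so that the $\partial_{y'}$ estimates do not lose powers of $A$.
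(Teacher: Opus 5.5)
Your proposal is correct and follows the paper's proof. It uses Lemma \ref{lem:3.3} (the $L^2$--$L^\infty$ interpolation built on Lemma \ref{lem:3.2}) for $p\ge 2$, the identities \eqref{3.17} to pass to primed derivatives, and the pointwise bounds of Lemmas \ref{lem:3.5} and \ref{lem:3.6} for $1\le p<2$, while making explicit what the paper leaves implicit: the unit-Jacobian change of variables behind the two-sided norm, the absorption $At\,(1+(At)^2)^{-1/2}\le 1$ of the $At\,\partial_x$ term in $\partial_{y'}$ (binomially for higher $k_2$), and the scaling computation of the $L^1$ norms from the pointwise bounds.
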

	
	\begin{proof}
		The estimate (\ref{03.1}) follows from Lemma \ref{lem:3.3}.
		Using (\ref{3.17}) and (\ref{03.1}), we have (\ref{03.2}) is also true.
		As for $1\leq p<2$, we consider the estimate for $\|\partial_x \mathbb{G}(\cdot, \cdot, t; x^\prime,y^\prime) \|_{L^p}$, $\|\partial_y \mathbb{G}(\cdot, \cdot, t; x^\prime,y^\prime)\|_{L^p}$ and $\|\mathbb{G}(\cdot, \cdot, t; x^\prime,y^\prime)\|_{L^p}$. By using lemma \ref{lem:3.5} and Lemma \ref{lem:3.6}, we obtain that
		\[
		\|\partial_x \mathbb{G}(\cdot, \cdot, t; x^\prime,y^\prime)\|_{L^p}
		\le C \|\partial_x \mathbb{G}_2(\cdot, \cdot, t)\|_{L^p}
		\le C t^{-\frac{1}{2}-\left(1-\frac{1}{p}\right)} \bigl(1 + (At)^2\bigr)^{-\frac{1}{2}\left(1+1-\frac{1}{p}\right)},
		\]
		\[
		\|\partial_y \mathbb{G}(\cdot, \cdot, t; x^\prime,y^\prime)\|_{L^p}
		\le C \|\partial_y \mathbb{G}_2(\cdot, \cdot, t)\|_{L^p}
		\le C t^{-\frac{1}{2}-\left(1-\frac{1}{p}\right)} \bigl(1 + (At)^2\bigr)^{-\frac{1}{2}\left(1-\frac{1}{p}\right)}.
		\]
		\[
		\|\mathbb{G}(\cdot, \cdot, t; x^\prime,y^\prime)\|_{L^p}
		\le C \|\mathbb{G}_2(\cdot, \cdot, t)\|_{L^p}
		\le C t^{-\left(1-\frac{1}{p}\right)} \bigl(1 + (At)^2\bigr)^{\left(1+1-\frac{1}{p}\right)},
		\]
		By using \eqref{eq:2.14} and \eqref{3.17} again, we can obtaion (\ref{03.1}) and \eqref{03.2}, then we proved the Proposition.
	\end{proof}
	
	\section{Nonlinear Stability}\label{s:NonlinearG}
	In this section, after briefly stating the local existence results of the solution, we will focus on discussing how to use the estimate of the Green's function to obtain the global existence of the solution and the stability of the solution.\par
	
	\subsection{The local existence of the solution}
	The local existence of the system (\ref{1.4})-(\ref{001}) can be obtained using the standard method. For the convenience of the reader, we only give the theorem here, and do not describe the proof.
	\begin{theorem}\label{thm3.1}
		Suppose  that $m_0, \omega_0\in L^1(\mathbb{R}^2)\cap L^{\infty}(\mathbb{R}^2)$, and take $\nu = \kappa =\frac{1}{2} \mu$ which satisfies $0<\mu \ll 1$, and define $A=\dfrac1\mu$. There exists constants $\mu_0$  and $c_0$ independent  of $\mu$  so that if 
		\begin{equation*}
			\|(m_0, \omega_{0})\|_{L^1\cap L^{\infty}}\leq c_0A^{-\frac{3}{4}}
		\end{equation*}
		for some sufficiently small $c_0,0<\mu\leq \mu_0,$, there exists a positive constant $T_0=T((m_0, \omega_{0}))$, when $t\leq T_0$, the equation
		(\ref{1.4})-(\ref{001})has a unique local classical solution $(m(t, x, y), w(t, x, y))$ which satisfies
		\begin{equation*}
			(m(t, x, y), w(t, x, y))\in C([0,T_0], L^{\frac{4}{3}}(\mathbb{R}^2)\cap L^{\infty}(\mathbb{R}^2)),
		\end{equation*}
		and for a given time $T_0$, if the solution of the equation satisfies the condition
		\begin{equation*}
			\lim_{t\rightarrow T_0}\sup_{0\leq s\leq T_0}\|(m(s, \cdot), w(s, \cdot))\|_{L^\infty(\mathbb{R}^2)}< \infty,
		\end{equation*}
		then there exists a sufficiently small $\varepsilon>0$, time $T_0$ can be extended to $T_0+\varepsilon$.
	\end{theorem}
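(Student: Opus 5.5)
The plan is a standard fixed-point/continuation argument for the transport–diffusion system (\ref{1.4}) in the time-rescaled variables, where the equation reads $\partial_t V + Ay\partial_x V - \overline{B}(D)V + A\,u\cdot\nabla V=0$ with $V=(m,\omega)^T$. The smallness condition on the data plays no role for local existence; it is simply carried along. We write the solution in Duhamel form using the Green's function of Section \ref{s:estimateG}:
\begin{equation*}
V(t)=\mathbb{G}(t)\circledast V_0 - A\int_0^t \mathbb{G}(t-s)\circledast \bigl(u\cdot\nabla V\bigr)(s)\,ds .
\end{equation*}
Since $\nabla\cdot u=0$, we can write $u\cdot\nabla V=\nabla\cdot(uV)$. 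Using the adjoint relations (\ref{3.17}), we move the derivative onto the kernel, so that the integral only involves $\partial_{x'}\mathbb{G}$ and $\partial_{y'}\mathbb{G}$ acting on $uV$. This avoids needing derivatives of $V$.

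We work in $X_T=C([0,T];L^{4/3}\cap L^\infty)$ with norm $\sup_{t\le T}\|V\|_{L^{4/3}\cap L^\infty}$. The steps are as follows.

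First, the linear part. By Proposition \ref{prop3.1} with $k=0$ and $p=1$, together with Young's inequality (Lemma \ref{lem2.1}), we get $\|\mathbb{G}\circledast V_0\|_{L^r}\le C\|V_0\|_{L^r}$ for $r\in[4/3,\infty]$. Continuity in time should follow from strong continuity of the heat-type semigroup.

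Second, the velocity. From $u=\nabla^\perp\Delta^{-1}m$ and Hardy–Littlewood–Sobolev (Lemma \ref{lem2.3}) with $s=1$, we get $\|u\|_{L^4}\le C\|m\|_{L^{4/3}}$. For the $L^\infty$ bound we split the Biot–Savart kernel into near and far parts, which gives $\|u\|_{L^\infty}\le C\|m\|_{L^{4/3}\cap L^\infty}$. Consequently
\begin{equation*}
\|uV\|_{L^{4/3}}+\|uV\|_{L^\infty}\le C\|V\|_{X_T}^2 ,
\end{equation*}
where the $L^{4/3}$ bound uses Hölder with $u\in L^\infty$.

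Third, the nonlinear term. Proposition \ref{prop3.1} with $k=1$ and $p=1$ gives $\interleave\nabla'\mathbb{G}(t-s)\interleave_{L^1}\le C(t-s)^{-1/2}$. This is integrable at $s=t$, so Young's inequality yields
\begin{equation*}
\Bigl\|A\int_0^t\nabla'\mathbb{G}(t-s)\circledast(uV)\,ds\Bigr\|_{L^{4/3}\cap L^\infty}\le CAT^{1/2}\|V\|_{X_T}^2 .
\end{equation*}
The difference of two solutions is handled the same way, using the bilinearity of $uV$.

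Fourth, the contraction. Choose $T_0$ so small that $CAT_0^{1/2}\cdot 4C\|V_0\|\le 1/2$. Then the Duhamel map is a contraction on the ball of radius $2C\|V_0\|_{L^{4/3}\cap L^\infty}$ in $X_{T_0}$. This gives existence and uniqueness. Note that $L^1\cap L^\infty\subset L^{4/3}$, and that $T_0$ depends only on the size of the data (and on $A$). Classical regularity then follows by parabolic smoothing, obtained by iterating the same estimates with $\partial\mathbb{G}$ on positive time intervals.

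For the continuation criterion, observe that the $L^{4/3}$ norm is propagated, or at worst grows at a controlled rate. The reason is that the transport term is divergence free, and an $L^{4/3}$ energy estimate on the coupled system gives a Gronwall bound for $\|V\|_{L^{4/3}}$ that does not involve $\|V\|_{L^\infty}$. Hence, if $\sup_{s\le T_0}\|V(s)\|_{L^\infty}<\infty$, the $X$-norm stays bounded up to $T_0$. Since the local existence time depends only on this norm, restarting the argument from a time close to $T_0$ extends the solution to $T_0+\varepsilon$.

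I expect the main obstacle to be the $L^{4/3}$ energy estimate for the coupled linear part. The matrix $\overline{B}$ is not symmetric, since it couples $m$ to $\Delta\omega$. A direct $L^p$ energy estimate on $m$ therefore produces a term $\int |m|^{p-2}m\,\Delta\omega$, which is not obviously controlled. My first attempt would be to avoid energy estimates altogether and run the continuation purely through the Duhamel bound of the third step. There the constant depends only on $\|V\|_{X}$, and $\|V(t)\|_{L^{4/3}}$ is controlled by a Gronwall argument applied to
\begin{equation*}
\|V(t)\|_{L^{4/3}}\le C\|V_0\|+CA\int_0^t(t-s)^{-1/2}\|u\|_{L^\infty}\|V(s)\|_{L^{4/3}}\,ds ,
\end{equation*}
using the assumed $L^\infty$ bound. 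One difficulty is that $\|u\|_{L^\infty}$ itself needs $\|m\|_{L^{4/3}}$, which makes the inequality quadratic. To handle this, I would interpolate, bounding $\|u\|_{L^\infty}$ via $\|m\|_{L^{4/3}}^{\theta}\|m\|_{L^\infty}^{1-\theta}$. Alternatively, I would use $L^4$ bounds for $u$ paired with $L^{4/3}$ and $L^\infty$ bounds for $V$ to keep the inequality linear in the $L^{4/3}$ norm.
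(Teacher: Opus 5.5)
The paper gives no proof of this theorem; it only says local existence follows ``by the standard method''. The existence/uniqueness part of your proposal is a sound version of that standard argument: with $\|\nabla_{x',y'}\mathbb{G}(t-s)\|_{L^1}\le C(t-s)^{-1/2}$ from Proposition~\ref{prop3.1} and $\|u\|_{L^\infty}\le C\|m\|_{L^{4/3}\cap L^\infty}$, the Duhamel map contracts on a short interval.

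\textbf{The gap is the continuation criterion.} There you need $\sup_{t\le T_0}\|V(t)\|_{L^{4/3}}<\infty$ from the $L^\infty$ bound alone, and neither of your routes gives it. The $L^{4/3}$ energy estimate is left unresolved; for $p<2$ the weight $|m|^{p-2}$ is singular on $\{m=0\}$, so the cross term $(p-1)\int|m|^{p-2}\nabla m\cdot\nabla\omega$ is not absorbed in the usual way.

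The Duhamel--Gronwall route fails for a structural reason. Under $V\mapsto V(\lambda\,\cdot)$ one has $u\mapsto\lambda^{-1}u(\lambda\,\cdot)$. Hence any H\"older/HLS bound $\|uV\|_{L^r}\le C\|V\|_{L^{4/3}}^{a}\|V\|_{L^\infty}^{2-a}$ must have $a=\frac23\bigl(1+\frac2r\bigr)$. Young's inequality reaches $L^{4/3}$ only from $r\le\frac43$, so necessarily $a\ge\frac53$. Your interpolation $\|u\|_{L^\infty}\le C\|m\|_{L^{4/3}}^{2/3}\|m\|_{L^\infty}^{1/3}$ and your $L^4$--$L^2$ pairing both land exactly at $a=\frac53$, giving
\begin{equation*}
y(t)\le C\|V_0\|+C_M A\int_0^t (t-s)^{-1/2}\,y(s)^{5/3}\,ds,\qquad y(t)=\|V(t)\|_{L^{4/3}},\quad M=\sup_{s\le T_0}\|V(s)\|_{L^\infty}.
\end{equation*}
This superlinear inequality controls $y$ only on an interval whose length depends on $y(0)$, which is no more than the local theorem already gives. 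So you get no uniform existence time near $T_0$, and the restart argument does not go through.

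\textbf{The missing idea.} Use a norm in which the divergence-free transport $A(y+u^1,u^2)\cdot\nabla$ cancels \emph{and} the Jordan-type coupling $-\Delta\omega$ can be absorbed; $L^2$ does both. Test the rescaled equations $\partial_t m+\cdots=\Delta m-\Delta\omega$ and $\partial_t\omega+\cdots=\Delta\omega+m-2\omega$ against $m$ and $\omega$:
\begin{equation*}
\frac12\frac{d}{dt}\|V\|_{L^2}^2=-\|\nabla m\|_{L^2}^2-\|\nabla\omega\|_{L^2}^2+\int\nabla m\cdot\nabla\omega+\int m\,\omega-2\|\omega\|_{L^2}^2\le\frac12\|V\|_{L^2}^2 .
\end{equation*}
Hence $\|V(t)\|_{L^2}\le e^{t/2}\|V_0\|_{L^2}$ unconditionally, and $L^1\cap L^\infty\subset L^2$. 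Now pair $\|uV\|_{L^{4/3}}\le\|u\|_{L^4}\|V\|_{L^2}\le C\|V\|_{L^2}\|V\|_{L^{4/3}}$ using Lemma~\ref{lem2.3}. This bound is consistent with scaling and makes your Duhamel inequality \emph{linear} in $y$, with coefficient $CAe^{T_0/2}\|V_0\|_{L^2}$. The singular (Henry) Gronwall lemma then bounds $\|V\|_{L^{4/3}}$ on $[0,T_0]$. Together with the assumed $L^\infty$ bound, this gives a uniform local existence time, and restarting at a time close to $T_0$ extends the solution past $T_0$.

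\textbf{A smaller point.} Your claim that continuity in time ``should follow from strong continuity of the heat-type semigroup'' is false in $L^\infty$. The heat semigroup is not strongly continuous on $L^\infty$ (take $V_0$ the indicator of a disc, which lies in $L^1\cap L^\infty$), and neither is the shear transport $f\mapsto f(x-Aty,y)$. So the fixed point should be run in $L^\infty(0,T;L^{4/3}\cap L^\infty)$. This yields $C([0,T_0];L^{4/3})\cap C((0,T_0];L^\infty)$, with only weak-$*$ continuity in $L^\infty$ at $t=0$. The class $C([0,T_0];L^\infty)$ claimed in the statement is not attainable for general $L^1\cap L^\infty$ data.
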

	
	\subsection{Nonlinear stability}
	In the following, we will mainly consider the global existence of the solution and the stability of the solution under the condition
	\begin{equation}\label{4.1}
		\|(m_0, \omega_{0})\|_{L^1\cap L^{\infty}}=\epsilon\leq c_0A^{-\frac{3}{4}}
	\end{equation}
	By using the bootstrap argument, for any $t \in[0, T]$ and $p\geq \frac{4}{3},$ we begin with the hypothesis that 
	\begin{equation}\label{4.2}
		\mathbf{H}(t):	||(m(t, \cdot,\cdot), \omega(t, \cdot,\cdot))||_{L^p}\leq \delta(1+t)^{-2(1-\frac{1}{p})}\epsilon,
	\end{equation}
	and the conclusion, denoted as
	\begin{equation}\label{4.3}
		\mathbf{C}(t):	||(m(t, \cdot,\cdot), \omega(t, \cdot,\cdot))||_{L^p}\leq \frac{\delta}{2}(1+t)^{-2(1-\frac{1}{p})}\epsilon,
	\end{equation}
	where $\delta>0$ is a fixed number, which is determined in the following text.
	The conditions $(b)-(d)$ stated in Lemma \ref{lem2.2} are satisfied, and we only need to verify condition $(a)$ under the assumption of \eqref{4.1}.\par
	In what follows, we discuss the nonlinear stability in different cases.
	\begin{proposition}\label{pro4.1}
		Assuming that $t\leq2 A^{-\frac{1}{2}}$ and (\ref{4.1}) , (\ref{4.2}) hold, then we can infer$$||(m(t, \cdot,\cdot), \omega(t, \cdot,\cdot))||_{L^p}\leq \frac{\delta}{2}(1+t)^{-2(1-\frac{1}{p})}\epsilon,\ \ \ \\ \forall p\geq \frac{4}{3}.$$
	\end{proposition}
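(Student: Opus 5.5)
We argue through the Duhamel representation of the rescaled system (time variable $t$ with $A=1/\mu$, as in Section \ref{s:pre}). Write $\Phi=(m,\omega)^T$. Since $u=\nabla^\perp\psi$ is divergence free, $u\cdot\nabla\Phi=\nabla\cdot(u\Phi)$, and
\begin{equation*}
\Phi(t)=\mathbb{G}(t)\circledast\Phi_0-\int_0^t\mathbb{G}(t-s)\circledast\nabla\cdot(u\Phi)(s)\,ds .
\end{equation*}
We use (\ref{3.17}) to move the derivative from $u\Phi$ onto the source variables $(x',y')$ of the Green's function. On the short interval $t\le 2A^{-1/2}$ the factor $At$ appearing in (\ref{3.17}) is at most $2A^{1/2}$, so we do not need the finer structure there. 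Likewise $(1+t)^{-2(1-1/p)}\sim 1$ on this interval, since $A^{-1/2}\le \mu_0^{1/2}\le 1$. So the goal reduces to showing $\|\Phi(t)\|_{L^p}\le \frac{\delta}{2}\epsilon$.

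\textbf{Linear part.} By Lemma \ref{lem2.1} with $q=1$, together with Proposition \ref{prop3.1} (valid for $p\ge1$ when $k=0$), we get
\begin{equation*}
\|\mathbb{G}(t)\circledast\Phi_0\|_{L^p}\le C\|\Phi_0\|_{L^p}\le C\epsilon ,
\end{equation*}
because $\|\Phi_0\|_{L^p}\le\|\Phi_0\|_{L^1\cap L^\infty}$. We then choose $\delta\ge 4C$, so that this term is at most $\frac{\delta}{4}\epsilon$.

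\textbf{Nonlinear part.} We split $\nabla\cdot(u\Phi)$ into its $\partial_{x'}$ and $\partial_{y'}$ pieces. The $\partial_{x'}$ piece, and the $At\,\partial_{x'}$ piece produced by (\ref{3.17}), carry the factor $(1+(A(t-s))^2)^{-1/2}$ from Proposition \ref{prop3.1}; this factor absorbs $A(t-s)$ up to a constant. Hence every term is bounded by $C\,\|\nabla_{x',y'}\mathbb{G}(t-s)\|_{L^{q}}$ (in the $\interleave\cdot\interleave$ sense) times $\|u\Phi(s)\|_{L^r}$, with $1+\frac1p=\frac1q+\frac1r$. We take $q\in[1,2)$, so that the kernel bound is $(t-s)^{-\frac12-(1-\frac1q)}$ and is integrable at $s=t$.

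For $\|u\Phi\|_{L^r}$ we write $u=\nabla^\perp\Delta^{-1}m$. Lemma \ref{lem2.3} with $s=1$ gives $\|u\|_{L^{a}}\le C\|m\|_{L^{b}}$ whenever $\frac1a=\frac1b-\frac12$ and $b\in(1,2)$. Taking $b=\frac43$ gives $a=4$, and this is where the lower bound $p\ge\frac43$ in the hypothesis is used. Combining with (\ref{4.2}) and Hölder's inequality,
\begin{equation*}
\|u\Phi\|_{L^r}\le C\|m\|_{L^{4/3}}\|\Phi\|_{L^{c}}\le C\delta^2\epsilon^2, \qquad \frac1r=\frac14+\frac1c .
\end{equation*}
Integrating in time over $[0,t]$ with $t\le 2A^{-1/2}$, the nonlinear term is bounded by
\begin{equation*}
C\delta^2\epsilon^2\big(A^{1/2}\cdot t^{\frac12-(1-\frac1q)}+t^{\frac12-(1-\frac1q)}\big)\le C\delta^2\epsilon^2 A^{1/2}.
\end{equation*}
Here the worst factor $A^{1/2}$ comes from the $At\,\partial_{x'}$ piece when we do not exploit the $(1+(A(t-s))^2)^{-1/2}$ gain. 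Using $\epsilon\le c_0A^{-3/4}$, this gives $C\delta^2 c_0A^{-1/4}\epsilon\le\frac{\delta}{4}\epsilon$ once $c_0$ is small.

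\textbf{Main obstacle.} The delicate step is the exponent bookkeeping: for each $p\ge\frac43$ we need an admissible triple $(q,r,c)$ with $q<2$, $c\ge\frac43$ and $r\ge1$, while keeping track of the powers of $A$ contributed by $At$ in (\ref{3.17}). For large $p$, and in particular $p=\infty$, we would take $r=p$ and $q=1$, and bound $\|u\|_{L^\infty}$ by interpolating $\|m\|_{L^{4/3}}$ and $\|m\|_{L^\infty}$. If the crude $A^{1/2}$ loss turns out to be too much in some range, we would first try to use the extra decay $(1+(A(t-s))^2)^{-1/2}$ more carefully, integrating $\int_0^t A(t-s)\,(1+(A(t-s))^2)^{-1/2}(t-s)^{-\alpha}\,ds$ exactly.
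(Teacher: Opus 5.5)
There is a genuine gap. Your Duhamel formula drops the factor $A$ in front of the nonlinearity, and your whole $A$-count rests on that. After the rescaling $t\to t/\mu$ of Section~\ref{s:pre}, system (\ref{1.4}) reads $\partial_t V+Ay\partial_xV-\overline{B}(D)V+A\,u\cdot\nabla V=0$. The same $1/\mu$ that produces $Ay\partial_x$ in (\ref{3.00002}) also multiplies $u\cdot\nabla V$, so
\[
V(t)=\mathbb{G}(t)\circledast V_0+A\int_0^t\nabla_{x',y'}\mathbb{G}(t-s)\circledast(uV)(s)\,ds .
\]
The paper's display (\ref{eq:4.5}) also omits this $A$ typographically, but its estimates of $I_{21}$ and $I_{22}$ carry it. With the $A$ restored, your bound becomes $C\delta^2\epsilon^2A^{3/2}$. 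Even taking $q=1$ and keeping $t^{1/2}\le CA^{-1/4}$, it is still $C\delta^2\epsilon^2A^{5/4}$. With $\epsilon\le c_0A^{-3/4}$ this leaves a factor $A^{1/2}$ or worse, so the bootstrap does not close. A symptom of the error: without the $A$, and using the absorption of $A(t-s)$ that you describe, your argument would close for $\epsilon\le c_0$ with no power of $\mu$ at all.

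Once corrected, the count has no slack: the exponent $3/4$ is exactly $A\cdot t^{1/2}$ at $t\sim A^{-1/2}$. Your fallback is therefore mandatory, not optional. First, you cannot use the crude bound $At\le 2A^{1/2}$. You must use (\ref{03.2}) as stated, i.e.\ $A(t-s)\bigl(1+(A(t-s))^2\bigr)^{-1/2}\le 1$, so that $\partial_{y'}$ costs nothing. Second, you must restrict the Young exponent $q$:
\begin{itemize}
\item With $q=1$ (so $r=p$), you get $A\int_0^t(t-s)^{-1/2}ds\le CAt^{1/2}\le CA^{3/4}$. Your H\"older/HLS splitting, plus the interpolation bound for $\|u\|_{L^a}$ with $a\ge p$ (or $\|u\|_{L^\infty}$) for large $p$, handles this case.
\item With $1<q<4/3$, you must also use $\bigl(1+(A\tau)^2\bigr)^{-\frac12(1-\frac1q)}\le (A\tau)^{-(1-\frac1q)}$. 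This gives $A^{1/q}\,t^{\frac12-2(1-\frac1q)}\le CA^{3/4}$, and it is the paper's route ($q=10/9$ on $[t/2,t]$).
\item A generic $q\in(1,2)$ without this decay, which your proposal allows, yields only $A^{\frac54-\frac1{2q}}\epsilon^2$ and fails.
\end{itemize}
With these choices the nonlinear term is at most $CA^{3/4}\delta^2\epsilon^2\le Cc_0\delta^2\epsilon$, which closes once $c_0$ is small relative to $\delta$. Your linear-part estimate is fine as it stands.
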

	\begin{proof}
		Let \( V(x,y,t) = \begin{pmatrix} m(x,y,t) \\ w(x,y,t) \end{pmatrix} \),\ \ \( V_0(x,y) = \begin{pmatrix} m_0(x,y) \\ w_0(x,y) \end{pmatrix} \). Then Duhamel's principle and  intergration by parts gives
		\begin{equation}\label{eq:4.5}
			\begin{aligned}
				&V(x,y,t)=\int_{\mathbb{R}^2}\mathbb{G}(x-x',y,t;y')V_{0}(x',y')dx'dy'\\
				&+\int_{0}^{t}\int_{\mathbb{R}^2}-\mathbb{G}(x-x',y,t-s;y')\nabla\cdot\left( uV\right) (x',y',s)dx'dy'ds\\
				&=\int_{\mathbb{R}^2}\mathbb{G}(x-x',y,t;y')V_{0}(x',y')dx'dy'\\
				&+\int_{0}^{t}\int_{\mathbb{R}^2}\nabla_{x',y'}\mathbb{G}(x-x',y,t-s;y')\left( uV\right) (x',y',s)dx'dy'ds\\
				&=\mathbb{G}(t)\circledast V_0+\int_{0}^{t}\nabla_{x',y'}\mathbb{G}(t-s)\circledast(uV)(s)ds\\
				&=\mathbb{G}(t)\circledast V_0+\int_{0}^{\frac{t}{2}}\nabla_{x',y'}\mathbb{G}(t-s)\circledast(uV)(s)ds+\int_{\frac{t}{2}}^t\nabla_{x',y'}\mathbb{G}(t-s)\circledast(uV)(s)ds\\
				&=I_1+I_{21}+I_{22}.
			\end{aligned}
		\end{equation}
		For $I_1$, using Lemma \ref{lem2.1},  on the one hand, we have
		\begin{equation}\label{eq:20}
			\begin{aligned}
				\| I_1\|_{L^p}&=\left\|\mathbb{G}(t)\circledast V_0\right\|_{L^p}\leq \interleave\mathbb{G}(t)\interleave_{L^p}\| V_{0}\|_{L^1}\\
				& \leq CA^{-(1-{\frac{1}{p}})}t^{-2(1-\frac{1}{p})}\|V_{0}\|_{L^1},
			\end{aligned}
		\end{equation}
		on the other hand, 
		\begin{equation}\label{eq:21}
			\begin{aligned}
				\| I_1\|_{L^p}&=\left\|\mathbb{G}(t)\circledast V_0\right\|_{L^p}\leq \interleave\mathbb{G}\interleave_{L^1}\|V_{0}\|_{L^p}\\
				& \leq C\|V_{0}\|_{L^p},
			\end{aligned}
		\end{equation}
		Combining  \eqref{eq:20} and \eqref{eq:21}, we can get
		\begin{equation}\label{4.7}
			\| I_1\|_{L^p}\leq C(1+t)^{-2(1-\frac{1}{p})}\epsilon.
		\end{equation}
		We choose appropriate $\delta$, then it holds
		\begin{equation}\label{4.8}
			\| I_1\|_{L^p}\leq \frac{\delta}{2}(1+t)^{-2(1-\frac{1}{p})}\epsilon.
		\end{equation}	
		For $I_{21}$, on the one hand, using Lemma \ref{lem2.1}, Lemma \ref{lem2.3}, Propsition \ref{prop3.1}, and H\"older's inequality, we can obtain
		\begin{equation*}
			\begin{aligned}
				\|I_{21}\|_{L^p}&=\left\|A\int_{0}^{\frac{t}{2}}\nabla_{x',y'}\mathbb{G}(t-s)\circledast(uV)(s)ds\right\|_{L^p}\\
				&	\leq A\int_{0}^{\frac{t}{2}}\left\|\nabla_{x',y'}\mathbb{G}(t-s)\circledast(uV)(s)\right\|_{L^p}ds\\
				&	\leq CA\int_{0}^{\frac{t}{2}}\interleave\nabla_{x',y'}\mathbb{G}(t-s)\interleave_{L^{p}}\|\left( uV\right) (s)\|_{L^1}ds\\
				&	\leq CA^{\frac{1}{p}}\int_{0}^{\frac{t}{2}}(t-s)^{-\frac{1}{2}-2(1-\frac{1}{p})}\|\left( uV\right) (s)\|_{L^1}ds\\
				&	\leq CA^{\frac{1}{p}}t^{-\frac{1}{2}-2(1-\frac{1}{p})}\int_{0}^{\frac{t}{2}}\| V(s)\|_{L^{\frac{4}{3}}}\|u(s)\|_{L^4}ds.
			\end{aligned}
		\end{equation*}
		By using Hardy-Littlewood-Sobolev inequality and \eqref{4.2}, we have
		\begin{equation}\label{4.9}
			\begin{aligned} 
				\|I_{21}\|_{L^p}&\leq CA^{\frac{1}{p}}t^{-\frac{1}{2}-2(1-\frac{1}{p})}\int_{0}^{\frac{t}{2}}\| V(s)\|_{L^{\frac{4}{3}}}^2ds\\
				&\leq CA^{\frac{1}{p}}t^{-\frac{1}{2}-2(1-\frac{1}{p})}\ln(1+t)\delta^2\epsilon^2\\
				&\leq CA^{\frac{3}{4}}t^{-2(1-\frac{1}{p})}\delta^2\epsilon^2\\
				&\leq\frac{\delta}{4}t^{-2(1-\frac{1}{p})}\epsilon,
			\end{aligned}
		\end{equation}
		where $p\geq \frac{4}{3}$ was employed.
		On the other hand, 
		\begin{equation}\label{4.10}
			\begin{aligned}
				\|I_{21}\|_{L^p}&	\leq CA\int_{0}^{\frac{t}{2}}\interleave\nabla_{x',y'}\mathbb{G}(t-s)\interleave_{L^{\frac{4}{3}}}\| V(s)\|_{L^p}\|u(s)\|_{L^4}ds\\
				&	\leq CA^{\frac{3}{4}}\int_{0}^{\frac{t}{2}}(t-s)^{-\frac{1}{2}-2(1-\frac{3}{4})}\| V(s)\|_{L^p}\|u(s)\|_{L^{\frac{4}{3}}}ds.\\
				&	\leq CA^{\frac{3}{4}}t^{-1}\int_{0}^{\frac{t}{2}}\| V(s)\|_{L^{\frac{4}{3}}}\|V(s)\|_{L^p}ds\\
				&	\leq CA^{\frac{3}{4}}t^{-1}t\delta^2\epsilon^2\\
				&\leq\frac{\delta}{4}\epsilon.
			\end{aligned}
		\end{equation}
		Combining (\ref{4.9}) and (\ref{4.10}), we have
		\begin{equation}\label{4.11}
			\| I_{21}\|_{L^p}\leq \frac{\delta}{4}(1+t)^{-2(1-\frac{1}{p})}\epsilon.
		\end{equation}
		For $I_{22}$, by using Lemma \ref{lem2.1}, Lemma \ref{lem2.3}, Propsition \ref{prop3.1}, H\"older's inequality and Hardy-Littlewood-Sobolev inequality, we have
		\begin{equation}\label{4.102}
			\begin{aligned}
				\|I_{22}\|_{L^p}&\leq CA\int_{\frac{t}{2}}^{t}\interleave\nabla_{x',y'}\mathbb{G}(t-s)\interleave_{L^{\frac{10}{9}}}\|V(s)\|_{L^p}\|u(s)\|_{L^{10}}ds\\
				&	\leq CA^{\frac{9}{10}}\int_{\frac{t}{2}}^{t}(t-s)^{-\frac{1}{2}-2(1-\frac{9}{10})}\|V(s)\|_{L^p}\|V(s)\|_{L^{\frac{5}{3}}}ds\\
				&	\leq CA^{\frac{9}{10}}(1+t)^{-2(1-\frac{1}{p})}\delta^2\epsilon^2\int_{\frac{t}{2}}^{t}(t-s)^{-\frac{7}{10}}ds\\
				&	\leq CA^{\frac{3}{4}}(1+t)^{-2(1-\frac{1}{p})}\delta^2\epsilon^2\\
				&\leq \frac{\delta}{4}(1+t)^{-2(1-\frac{1}{p})}\epsilon,
			\end{aligned}
		\end{equation}
		where we have used $t\leq2 A^{-\frac{1}{2}}$ .
		Combining (\ref{4.8}), (\ref{4.11}) and (\ref{4.102}), we can obtain that
		$$\|V(x,y,t)\|_{L^p}\leq  \frac{\delta}{2}(1+t)^{-2(1-\frac{1}{p})}\epsilon.$$
		We complete the proof of the proposition.
	\end{proof}
	\begin{proposition}\label{pro4.2}
		Assuming that $t>2 A^{-\frac{1}{2}}$ and (\ref{4.1}) , (\ref{4.2}) hold, then we can infer$$||(m(t, \cdot,\cdot), \omega(t, \cdot,\cdot))||_{L^p}\leq \frac{\delta}{2}(1+t)^{-2(1-\frac{1}{p})}\epsilon,\ \ \ \\ \forall p\geq \frac{4}{3}.$$
	\end{proposition}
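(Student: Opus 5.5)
The plan follows the proof of Proposition \ref{pro4.1}: write $V=(m,\omega)^T$ via the Duhamel formula \eqref{eq:4.5} as $I_1+I_{21}+I_{22}$ and show that each piece is at most a fraction of $\delta(1+t)^{-2(1-\frac1p)}\epsilon$. The new ingredient for $t>2A^{-\frac12}$ is that $At\ge 2A^{\frac12}\gg1$. Hence $(1+(At)^2)^{-\frac12}\le (At)^{-1}$, and the shear factor in Proposition \ref{prop3.1} gives genuine extra decay both in $A$ and in $t$. The term $I_1$ is handled exactly as in \eqref{eq:20}--\eqref{4.8}. We interpolate between $\interleave\mathbb{G}(t)\interleave_{L^p}\|V_0\|_{L^1}\le CA^{-(1-\frac1p)}t^{-2(1-\frac1p)}\epsilon$ and $\interleave\mathbb{G}\interleave_{L^1}\|V_0\|_{L^p}\le C\epsilon$, which gives $\|I_1\|_{L^p}\le \frac{\delta}{2}\cdot\frac12(1+t)^{-2(1-\frac1p)}\epsilon$ once $\delta$ is large relative to $C$.

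For $I_{21}$ (the range $s\in[0,t/2]$, so $t-s\ge t/2>A^{-\frac12}$), we use \eqref{03.2} with $k=1$ and $p\ge1$. The worse of the two derivative directions is $\partial_{y'}$, and it gives
\[
\interleave\nabla_{x',y'}\mathbb{G}(t-s)\interleave_{L^r}\le C(t-s)^{-\frac12-(1-\frac1r)}\bigl(A(t-s)\bigr)^{-(1-\frac1r)}.
\]
Since $\nabla_{x',y'}$ may produce a factor $At\,\partial_{x'}$ via \eqref{3.17}, we check that $At\,(A(t-s))^{-1-(1-\frac1r)}$ still decays. The $x'$-derivative carries an extra $(At)^{-1}$, so this product is again bounded by the $\partial_{y'}$ bound. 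We take $r=p$ and the nonlinearity in $L^1$, with $\|uV\|_{L^1}\le\|u\|_{L^4}\|V\|_{L^{4/3}}\le C\|V\|_{L^{4/3}}^2$ by Lemma \ref{lem2.3}. Using \eqref{4.2} with $p=\frac43$, $\|V(s)\|_{L^{4/3}}^2\le\delta^2\epsilon^2(1+s)^{-1}$, we get
\[
\|I_{21}\|_{L^p}\le C A^{\frac1p}\cdot A^{-(1-\frac1p)}\,t^{-\frac12-2(1-\frac1p)}\ln(1+t)\,\delta^2\epsilon^2 .
\]
The prefactor $A$ from the time rescaling (the factor in front of the Duhamel integral in Proposition \ref{pro4.1}) must be tracked. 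Because $t^{-\frac12}\ln(1+t)\le C A^{\frac14}$ for $t>2A^{-\frac12}$, the worst power of $A$ should not exceed $A^{\frac34}$. Then $\epsilon\le c_0A^{-\frac34}$ with $c_0$ small (depending on $\delta$) absorbs $\delta^2\epsilon^2$ into $\frac\delta8\epsilon$. For small $t$ we also need a bound without decay, obtained as in \eqref{4.10} by placing $V$ in $L^p$ and $u$ in $L^4$, with the kernel in $L^{4/3}$.

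For $I_{22}$ (the range $s\in[t/2,t]$), $t-s$ can be small, so the shear gain is not uniform, and this is the step I expect to be the main obstacle. I would split further into $t-s\le A^{-\frac12}$ and $t-s> A^{-\frac12}$. On the first subinterval we argue exactly as in \eqref{4.102}: the kernel goes in $L^{10/9}$ and the nonlinearity is controlled by $\|V\|_{L^p}\|u\|_{L^{10}}\le C\|V\|_{L^p}\|V\|_{L^{5/3}}$, with $\|V(s)\|_{L^p}\lesssim(1+t)^{-2(1-\frac1p)}\delta\epsilon$ since $s\sim t$. The short length $A^{-\frac12}$ of the time interval compensates $A^{\frac{9}{10}}$ up to $A^{\frac34}$ or better. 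On the second subinterval the factor $(A(t-s))^{-(1-\frac1r)}$ is available, together with the extra decay $(1+s)^{-2(1-\frac35)}$ of $\|V\|_{L^{5/3}}$. The time integral $\int (t-s)^{-\frac12-2(1-\frac1r)}ds$ then converges near its upper end, and its large-$t$ growth is dominated by the decay of $\|V\|_{L^{5/3}}$. The remaining risk is the $At\,\partial_{x'}$ term from \eqref{3.17}. If the exponent bookkeeping fails there, I would first try the alternative Hölder split with $u\in L^\infty$, estimating $\|u\|_{L^\infty}\le C\|V\|_{L^{4/3}}^{1/2}\|V\|_{L^\infty}^{1/2}$, which reserves more decay for the nonlinear factor.

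Summing the three bounds gives $\mathbf{C}(t)$, which completes condition (a) of Lemma \ref{lem2.2}.
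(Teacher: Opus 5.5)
Your decomposition is the paper's: $J_1$, the piece $J_{23}$ on $[0,t/2]$, and $[t/2,t]$ cut at $t-s=A^{-\frac12}$ into $J_{21}$ and $J_{22}$. Your treatment of $[t-A^{-\frac12},t]$, with the kernel in $L^{10/9}$, is exactly the paper's $J_{21}$: $A^{\frac{9}{10}}\int_0^{A^{-1/2}}\tau^{-\frac{7}{10}}\,d\tau\approx A^{\frac34}$. The gap is the middle piece $s\in[t/2,t-A^{-\frac12}]$. With the kernel in $L^r$, the prefactor $A$ and Proposition \ref{prop3.1} give
\[
\|J_{22}\|_{L^p}\lesssim A^{\frac1r}\int_{A^{-1/2}}^{t/2}\tau^{-\frac52+\frac2r}\,d\tau\;\sup_{t/2\le s\le t}\|V(s)\|_{L^p}\|V(s)\|_{L^b},\qquad \frac1b=\frac32-\frac1r .
\]
You make the singularity integrable: $r<\frac43$, and your $L^{5/3}$ pairing means $r=\frac{10}{9}$. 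Then the integral is $\approx t^{\frac2r-\frac32}$ and the cutoff plays no role. For $t\sim1$ you get $\approx A^{\frac1r}\delta^2\epsilon^2$, i.e. $A^{\frac{9}{10}}\delta^2\epsilon^2$. Absorbing this into $\frac\delta8\epsilon$ requires $\delta A^{\frac1r}\epsilon\lesssim1$. But \eqref{4.1} only gives $\delta A^{\frac1r}\epsilon\le c_0\delta A^{\frac1r-\frac34}$, which is unbounded as $\mu\to0$. The decay $(1+s)^{-\frac45}$ of $\|V\|_{L^{5/3}}$ does not help, since at $t\sim1$ there is no decay to spend; it only rescues $t\gtrsim A^{3/10}$. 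Your fallback $u\in L^\infty$ targets time decay, which is not the issue. With $V$ kept in $L^p$ it even forces the kernel into $L^1$, where Proposition \ref{prop3.1} gives no shear gain for $\partial_{y'}$, so the prefactor becomes a full $A$. Also, by scaling the correct interpolation is $\|u\|_{L^\infty}\le C\|m\|_{L^{4/3}}^{2/3}\|m\|_{L^\infty}^{1/3}$, not exponents $\frac12,\frac12$.

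The missing idea is to \emph{use} the cutoff rather than avoid the singularity. The paper puts the kernel in $L^{p_1}$ with $\frac43<p_1<2$, so that $\frac12+2(1-\frac1{p_1})>1$ and the integral is dominated by its lower end: $\int_{A^{-1/2}}^{t/2}\tau^{-\frac12-2(1-\frac1{p_1})}d\tau\le CA^{\frac34-\frac1{p_1}}$, which exactly cancels $A^{\frac1{p_1}}$. This is where $t>2A^{-\frac12}$ is genuinely used. Within these kernel bounds the choice is forced at $t\sim1$: $r<\frac43$ gives $A^{\frac1r}$, $r=\frac43$ gives $A^{\frac34}\log(A^{\frac12}t)$, and only $r>\frac43$ gives $A^{\frac34}$. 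Be aware that, as written, the paper's HLS step then puts $V$ in $L^b$ with $b=(\frac32-\frac1{p_1})^{-1}<\frac43$, outside the range of \eqref{4.2}. For $p>\frac43$ you can stay inside that range by taking $\frac43<r\le\min(p,2)$ and $\|uV\|_{L^q}\le\|u\|_{L^4}\|V\|_{L^c}$ with $\frac1c=\frac34+\frac1p-\frac1r$. At $p=\frac43$, however, Young forces $r\le\frac43$ and within this scheme the logarithm remains.

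Two smaller points. First, in your $I_{21}$ bound the factor $A^{-(1-\frac1p)}$ is counted twice; the correct power is $A^{\frac1p}$. The fact to use is $\sup_{t>0}t^{-\frac12}\ln(1+t)<\infty$, not $\le CA^{\frac14}$, which would give $A^{\frac1p+\frac14}$, i.e. $A$ at $p=\frac43$. Second, the shear factor that \eqref{3.17} produces in the Duhamel term is $A(t-s)$, not $At$, and \eqref{03.2} already bounds $\partial_{y'}\mathbb{G}$ directly, so there is no residual risk there.
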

	\begin{proof}
		By Duhamel's principle and  intergration by parts, we have
		\begin{equation*}
			\begin{aligned}
				&V(x,y,t)=\int_{\mathbb{R}^2}\mathbb{G}(x-x',y,t;y')V_{0}(x',y')dx'dy'\\
				&+\int_{0}^{t}\int_{\mathbb{R}^2}-\mathbb{G}(x-x',y,t-s;y')\nabla\cdot\left( uV\right) (x',y',s)dx'dy'ds\\
				&=\int_{\mathbb{R}^2}\mathbb{G}(x-x',y,t;y')V_{0}(x',y')dx'dy'\\
				&+\int_{0}^{t}\int_{\mathbb{R}^2}\nabla_{x',y'}\mathbb{G}(x-x',y,t-s;y')\left( uV\right) (x',y',s)dx'dy'ds\\
				&=\mathbb{G}(t)\circledast V_0+\int_{0}^{t}\nabla_{x',y'}\mathbb{G}(t-s)\circledast(uV)(s)ds\\
				&=J_1+J_{2}.
			\end{aligned}
		\end{equation*}	
		For $J_1$, similar to the estimate of $I_1$ in Proposition \ref{pro4.1}, we have
		\begin{equation}\label{4.13}
			\| J_1\|_{L^p}\leq \frac{\delta}{2}(1+t)^{-2(1-\frac{1}{p})}\epsilon.
		\end{equation}	
		For $J_2$, we have
		\begin{equation*}\label{eq:31}
			\begin{aligned}
				J_2
				&=A\int_{t-A^{-\frac{1}{2}}}^{t}\nabla_{x',y'}\mathbb{G}(t-s)\circledast(uV)(s)ds
				+A\int_{t/2}^{t-A^{-\frac{1}{2}}}\nabla_{x',y'}\mathbb{G}(t-s)\circledast(uV)(s)ds\\
				&+A\int_{0}^{\frac{t}{2}}\nabla_{x',y'}\mathbb{G}(t-s)\circledast(uV)(s)ds\\
				&=J_{21}+J_{22}+J_{23}.
			\end{aligned}
		\end{equation*}
		For $J_{23}$, similar to the estimate of $I_{21}$ in Proposition \ref{pro4.1}, we have
		\begin{equation}\label{4.14}
			\| J_{23}\|_{L^p}\leq \frac{\delta}{4}(1+t)^{-2(1-\frac{1}{p})}\epsilon.
		\end{equation}
		For $J_{21}$, by using Lemma \ref{lem2.1}, Lemma \ref{lem2.3}, Propsition \ref{prop3.1}, H\"older's inequality and Hardy-Littlewood-Sobolev inequality, we have
		\begin{equation*}
			\begin{aligned}
				\|J_{21}\|_{L^p}&\leq CA\int_{t-A^{-\frac{1}{2}}}^{t}\interleave\nabla_{x',y'}\mathbb{G}(t-s)\interleave_{L^{p'}}\|V(s)\|_{L^p}\|u(s)\|_{L^{\frac{1}{1-\frac{1}{p'}}}}ds\\
				&	\leq CA^{\frac{1}{p'}}\int_{t-A^{-\frac{1}{2}}}^{t}(t-s)^{-\frac{1}{2}-2(1-\frac{1}{p'})}\|V(s)\|_{L^p}\|V(s)\|_{L^{\frac{1}{1-\frac{1}{p'}+\frac{1}{2}}}}ds.
			\end{aligned}
		\end{equation*}
		To ensure the integrability of time at $s=t,$  and considering the requirement $1< p'<4/3$, we can deduce that
		\begin{equation}\label{4.15}
			\begin{aligned}
				\|J_{21}\|_{L^p}&	\leq CA^{\frac{1}{p'}-\frac{1}{p'}+\frac{3}{4}}(1+t)^{-2(1-\frac{1}{p})}\delta^2\epsilon^2\\
				&\leq  \frac{\delta}{8}(1+t)^{-2(1-\frac{1}{p})}\epsilon.
			\end{aligned}
		\end{equation}
		For $J_{22}$, by using Lemma \ref{lem2.1}, Lemma \ref{lem2.3}, Propsition \ref{prop3.1}, H\"older's inequality and Hardy-Littlewood-Sobolev inequality, we have
		\begin{equation*}
			\begin{aligned}
				\|J_{22}\|_{L^p}&\leq CA\int_{\frac{t}{2}}^{t-A^{-\frac{1}{2}}}\interleave\nabla_{x',y'}\mathbb{G}(t-s)\interleave_{L^{p_1}}\|V(s)\|_{L^p}\|u(s)\|_{L^{\frac{1}{1-\frac{1}{p_1}}}}ds\\
				&	\leq CA^{\frac{1}{p_1}}\int_{\frac{t}{2}}^{t-A^{-\frac{1}{2}}}(t-s)^{-\frac{1}{2}-2(1-\frac{1}{p_1})}\|V(s)\|_{L^p}\|V(s)\|_{L^{\frac{1}{1-\frac{1}{p_1}+\frac{1}{2}}}}ds.
			\end{aligned}
		\end{equation*}
		To utilize Hardy-Littlewood-Sobolev inequality and complete our proof, we consider $4/3< p_1<2$, then we have
		\begin{equation}\label{4.16}
			\begin{aligned}
				\|J_{22}\|_{L^{p_1}}&	\leq CA^{\frac{1}{p_1}-\frac{1}{p_1}+\frac{3}{4}}(1+t)^{-2(1-\frac{1}{p})}\delta^2\epsilon^2\\
				&\leq  \frac{\delta}{8}(1+t)^{-2(1-\frac{1}{p})}\epsilon.
			\end{aligned}
		\end{equation}
		Combining (\ref{4.13})-(\ref{4.16}) , we can obtain that
		$$\|V(x,y,t)\|_{L^p}\leq  \frac{\delta}{2}(1+t)^{-2(1-\frac{1}{p})}\epsilon.$$
		We complete the proof of the proposition.
	\end{proof}
	
	\nocite{*}
	\bibliography{ref.bib}

@ARTICLE{Fortunato2010,
  author  = {Fortunato, S.},
  title   = {Community detection in graphs},
  journal = {Phys. Rep.-Rev. Sec. Phys. Lett.}, 
  volume  = {486},
  year    = {2010},
  pages   = {75-174}
}

@ARTICLE{NewmanGirvan2004,
  author  = {Newman, M. E. J. and Girvan, M.},
  title   = {Finding and evaluating community structure in networks},
  journal = {Phys. Rev. E.}, 
  volume  = {69},
  year    = {2004},
  pages   = {026113}
}

@ARTICLE{Vehlowetal2013,
  author  = {Vehlow, C. and Reinhardt, T. and Weiskopf, D.},
  title   = {Visualizing Fuzzy Overlapping Communities in Networks},
  journal = {IEEE Trans. Vis. Comput. Graph.}, 
  volume  = {19},
  year    = {2013},
  pages   = {2486-2495}
}

@ARTICLE{Raghavanetal2007,
  author  = {Raghavan, U. and Albert, R. and Kumara, S.},
  title   = {Near linear time algorithm to detect community structures in large-scale networks},
  journal = {Phys. Rev E.}, 
  volume  = {76},
  year    = {2007},
  pages   = {036106}
}

@ARTICLE{SubeljBajec2011a,
  author  = {\v{S}ubelj, L. and Bajec, M.},
  title   = {Robust network community detection using balanced propagation},
  journal = {Eur. Phys. J. B.}, 
  volume  = {81},
  year    = {2011},
  pages   = {353-362}
}

@ARTICLE{Louetal2013,
  author  = {Lou, H. and Li, S. and Zhao, Y.},
  title   = {Detecting community structure using label propagation with weighted coherent neighborhood propinquity},
  journal = {Physica A.}, 
  volume  = {392},
  year    = {2013},
  pages   = {3095-3105}
}

@ARTICLE{Clausetetal2004,
  author  = {Clauset, A. and Newman, M. E. J. and Moore, C.},
  title   = {Finding community structure in very large networks},
  journal = {Phys. Rev. E.}, 
  volume  = {70},
  year    = {2004},
  pages   = {066111}
}

@ARTICLE{Blondeletal2008,
  author  = {Blondel, V. D. and Guillaume, J. L. and Lambiotte, R. and Lefebvre, E.},
  title   = {Fast unfolding of communities in large networks},
  journal = {J. Stat. Mech.-Theory Exp.}, 
  volume  = {2008},
  year    = {2008},
  pages   = {P10008}
}

@ARTICLE{SobolevskyCampari2014,
  author  = {Sobolevsky, S. and Campari, R.},
  title   = {General optimization technique for high-quality community detection in complex networks},
  journal = {Phys. Rev. E.}, 
  volume  = {90},
  year    = {2014},
  pages   = {012811}
}

@ARTICLE{FortunatoBarthelemy2007,
  author  = {Fortunato, S. and Barthelemy, M.},
  title   = {Resolution limit in community detection},
  journal = {Proc. Natl. Acad. Sci. U. S. A.}, 
  volume  = {104},
  year    = {2007},
  pages   = {36-41}
}

@ARTICLE{SubeljBajec2011b,
  author  = {\v{S}ubelj, L. and Bajec, M.},
  title   = {Unfolding communities in large complex networks: Combining defensive and offensive label propagation for core extraction},
  journal = {Phys. Rev. E.}, 
  volume  = {83},
  year    = {2011},
  pages   = {036103}
}

@ARTICLE{WangLi2013,
  author  = {Wang, X. and Li, J.},
  title   = {Detecting communities by the core-vertex and intimate degree in complex networks},
  journal = {Physica A.}, 
  volume  = {392},
  year    = {2013},
  pages   = {2555-2563}
}

@ARTICLE{Lietal2013,
  author  = {Li, J. and Wang, X. and Eustace, J.},
  title   = {Detecting overlapping communities by seed community in weighted complex networks},
  journal = {Physica A.}, 
  volume  = {392},
  year    = {2013},
  pages   = {6125-6134}
}

@ARTICLE{Fabioetal2013,
  author  = {Fabio, D. R. and Fabio, D. and Carlo, P.},
  title   = {Profiling core-periphery network structure by random walkers},
  journal = {Sci. Rep.}, 
  volume  = {3},
  year    = {2013},
  pages   = {1467}
}

@ARTICLE{Chenetal2013,
  author  = {Chen, Q. and Wu, T. T. and Fang, M.},
  title   = {Detecting local community structure in complex networks based on local degree central nodes},
  journal = {Physica A.}, 
  volume  = {392},
  year    = {2013},
  pages   = {529-537}
}

@ARTICLE{Zhangetal2007,
  author  = {Zhang, S. and Wang, R. and Zhang, X.},
  title   = {Identification of overlapping community structure in complex networks using fuzzy c-means clustering},
  journal = {Physica A.}, 
  volume  = {374},
  year    = {2007},
  pages   = {483-490}
}

@ARTICLE{Nepuszetal2008,
  author  = {Nepusz, T. and Petr\'oczi, A. and N\'egyessy, L. and Bazs\'o, F.},
  title   = {Fuzzy communities and the concept of bridgeness in complex networks},
  journal = {Phys. Rev. E.}, 
  volume  = {77},
  year    = {2008},
  pages   = {016107}
}

@ARTICLE{FabricioLiang2013,
  author  = {Fabricio, B. and Liang, Z.},
  title   = {Fuzzy community structure detection by particle competition and cooperation},
  journal = {Soft Comput.}, 
  volume  = {17},
  year    = {2013},
  pages   = {659-673}
}

@ARTICLE{Sunetal2011,
  author  = {Sun, P. and Gao, L. and Han, S.},
  title   = {Identification of overlapping and non-overlapping community structure by fuzzy clustering in complex networks},
  journal = {Inf. Sci.}, 
  volume  = {181},
  year    = {2011},
  pages   = {1060-1071}
}

@ARTICLE{Wangetal2013,
  author  = {Wang, W. and Liu, D. and Liu, X. and Pan, L.},
  title   = {Fuzzy overlapping community detection based on local random walk and multidimensional scaling},
  journal = {Physica A.}, 
  volume  = {392},
  year    = {2013},
  pages   = {6578-6586}
}

@ARTICLE{Psorakisetal2011,
  author  = {Psorakis, I. and Roberts, S. and Ebden, M. and Sheldon, B.},
  title   = {Overlapping community detection using Bayesian non-negative matrix factorization},
  journal = {Phys. Rev. E.}, 
  volume  = {83},
  year    = {2011},
  pages   = {066114}
}

@CONFERENCE{ZhangYeung2012,
  author  = {Zhang, Y. and Yeung, D.},
  title   = {Overlapping Community Detection via Bounded Nonnegative Matrix Tri-Factorization},
  booktitle = {In Proc. ACM SIGKDD Conf.}, 
  year    = {2012},
  pages   = {606-614}
}

@ARTICLE{Liu2010,
  author  = {Liu, J.},
  title   = {Fuzzy modularity and fuzzy community structure in networks},
  journal = {Eur. Phys. J. B.}, 
  volume  = {77},
  year    = {2010},
  pages   = {547-557}
}

@ARTICLE{Havensetal2013,
  author  = {Havens, T. C. and Bezdek, J. C. and Leckie, C. and Ramamohanarao, K. and Palaniswami, M.},
  title   = {A Soft Modularity Function For Detecting Fuzzy Communities in Social Networks},
  journal = {IEEE Trans. Fuzzy Syst.}, 
  volume  = {21},
  year    = {2013},
  pages   = {1170-1175}
}

@misc{Newman2013,
  author = {Newman, M. E. J.},
  title  = {Network data},
  howpublished = "\url{http://www-personal.umich.edu/~mejn/netdata/}",
  year = {2013}
}

@ARTICLE{SubeljBajec2012,
  author  = {\v{S}ubelj, L. and Bajec, M.},
  title   = {Ubiquitousness of link-density and link-pattern communities in real-world networks},
  journal = {Eur. Phys. J. B.}, 
  volume  = {85},
  year    = {2012},
  pages   = {1-11}
}

@ARTICLE{Lancichinettietal2008,
  author  = {Lancichinetti, A. and Fortunato, S. and Radicchi, F.},
  title   = {Benchmark graphs for testing community detection algorithms},
  journal = {Phys. Rev. E.}, 
  volume  = {78},
  year    = {2008},
  pages   = {046110}
}

@ARTICLE{Liuetal2014,
  author  = {Liu, W. and Pellegrini, M. and Wang, X.},
  title   = {Detecting Communities Based on Network Topology},
  journal = {Sci. Rep.}, 
  volume  = {4},
  year    = {2014},
  pages   = {5739}
}

@ARTICLE{Danonetal2005,
  author  = {Danon, L. and Diaz-Guilera, A. and Duch, J. and Arenas, A.},
  title   = {Comparing community structure identification},
  journal = {J. Stat. Mech.-Theory Exp.}, 
  volume  = {},
  year    = {2005},
  pages   = {P09008}
}

@ARTICLE{Gregory2011,
  author  = {Gregory, S.},
  title   = {Fuzzy overlapping communities in networks},
  journal = {J. Stat. Mech.-Theory Exp.}, 
  volume  = {},
  year    = {2011},
  pages   = {P02017}
}

@ARTICLE{LancichinettiFortunato2009,
  author  = {Lancichinetti, A. and Fortunato, S.},
  title   = {Benchmarks for testing community detection algorithms on directed and weighted graphs with overlapping communities},
  journal = {Phys. Rev. E.}, 
  volume  = {80},
  year    = {2009},
  pages   = {016118}
}

@CONFERENCE{HullermeierRifqi2009,
  author  = {Hullermeier, E. and Rifqi, M.},
  title   = {A Fuzzy Variant of the Rand Index for Comparing Clustering Structures},
  booktitle = {in Proc. IFSA/EUSFLAT Conf.}, 
  year    = {2009},
  pages   = {1294-1298}
}

@book{Bollob.1993,
  author    = {B. Bollob{\'a}s and W. Fulton and F. Kirwan and P. Sarnak and B. Simon and B. Totaro},
  title     = {Fourier Integrals in Classical Analysis},
  edition   = {Second},
  series    = {Cambridge Tracts in Mathematics},
  publisher = {Cambridge University Press},
  year      = {1993}
}

@book{8_pa,
  author    = {Terence Tao},
  title     = {Nonlinear Dispersive Equations: Local and Global Analysis},
  series    = {CBMS Regional Conference Series in Mathematics},
  number    = {106},
  publisher = {American Mathematical Society},
  address   = {Providence, RI},
  year      = {2006}
}

@book{Stein.1970,
  author    = {E. M. Stein},
  title     = {Singular Integrals and Differentiability Properties of Functions},
  series    = {Princeton Mathematical Series},
  number    = {30},
  publisher = {Princeton University Press},
  address   = {Princeton, NJ},
  year      = {1970}
}

@article{7,
  author  = {A. C. Eringen},
  title   = {Theory of micropolar fluids},
  journal = {Journal of Mathematics and Mechanics},
  volume  = {16},
  number  = {1},
  pages   = {1--18},
  year    = {1966}
}

@article{8,
  author  = {A. C. Eringen},
  title   = {Micropolar fluids with stretch},
  journal = {International Journal of Engineering Science},
  volume  = {7},
  number  = {1},
  pages   = {115--127},
  year    = {1969}
}

@article{9,
  author  = {B. Dong and Z. Chen},
  title   = {Regularity criteria of weak solutions to the three-dimensional micropolar flows},
  journal = {Journal of Mathematical Physics},
  volume  = {50},
  pages   = {103525},
  year    = {2009}
}

@article{10,
  author  = {B. Dong and Z. Chen},
  title   = {Asymptotic profiles of solutions to the 2D viscous incompressible micropolar fluid flows},
  journal = {Discrete and Continuous Dynamical Systems},
  volume  = {23},
  pages   = {765--784},
  year    = {2009}
}

@article{11,
  author  = {B. Dong and J. Li and J. Wu},
  title   = {Global well-posedness and large-time decay for the 2D micropolar equations},
  journal = {Journal of Differential Equations},
  volume  = {262},
  pages   = {3488--3523},
  year    = {2017}
}

@article{12,
  author  = {B. Dong and Z. Zhang},
  title   = {Global regularity of the 2D micropolar fluid flows with zero angular viscosity},
  journal = {Journal of Differential Equations},
  volume  = {249},
  pages   = {200--213},
  year    = {2010}
}

@article{13,
  author  = {X. Jin and Q. Jiu},
  title   = {Stability for the 2D micropolar equations with partial dissipation near Couette flow},
  journal = {Communications in Mathematical Sciences},
  volume  = {22},
  number  = {6},
  pages   = {1529--1548},
  year    = {2024}
}

@article{14,
  author  = {Y. Wang and L. Li},
  title   = {Linear and nonlinear enhanced dissipation for the 2D micropolar equations near Couette},
  journal = {Discrete and Continuous Dynamical Systems Series B},
  volume  = {32},
  pages   = {103--118},
  year    = {2026}
}

@article{16,
  author  = {G. Wang and L. Wang},
  title   = {On asymptotic stability of Couette flow for 2-D Boussinesq system in whole space via Green's function},
  journal = {Discrete and Continuous Dynamical Systems Series B},
  volume  = {30},
  number  = {8},
  pages   = {3014--3041},
  year    = {2025}
}

@article{Vinograd1957,
  author  = {R. E. Vinograd},
  title   = {On the central characteristic index of a system of differential equations},
  journal = {Matematicheskii Sbornik},
  volume  = {42},
  number  = {2},
  pages   = {207--222},
  year    = {1957},
  note    = {(In Russian)}
}

@article{Josic2002,
  author  = {K. Josi{\'c} and R. Rosenbaum},
  title   = {Unstable equilibria with stable characteristic roots},
  journal = {SIAM Review},
  volume  = {44},
  number  = {4},
  pages   = {681--688},
  year    = {2002}
}

@book{amann1990ode,
  author    = {Herbert Amann},
  title     = {Ordinary Differential Equations},
  series    = {De Gruyter Studies in Mathematics},
  number    = {13},
  publisher = {Walter de Gruyter \& Co.},
  address   = {Berlin},
  year      = {1990},
  pages     = {xiv+458},
  isbn      = {3-11-011515-8}
}

@book{Schmid.2001,
  author    = {P. J. Schmid and D. S. Henningson},
  title     = {Stability and Transition in Shear Flows},
  series    = {Applied Mathematical Sciences},
  number    = {142},
  publisher = {Springer-Verlag},
  address   = {New York},
  year      = {2001}
}

@article{Trefethen.1993,
  author  = {L. N. Trefethen and A. E. Trefethen and S. C. Reddy and T. A. Driscoll},
  title   = {Hydrodynamic stability without eigenvalues},
  journal = {Science},
  volume  = {261},
  number  = {5121},
  pages   = {578--584},
  year    = {1993}
}

@article{Kelvin.1887,
  author  = {Lord Kelvin},
  title   = {Stability of fluid motion---rectilinear motion of viscous fluid between two parallel plates},
  journal = {Philosophical Magazine},
  volume  = {24},
  pages   = {188--196},
  year    = {1887}
}

@article{Bedrossian.2019,
  author  = {J. Bedrossian and P. Germain and N. Masmoudi},
  title   = {Stability of the Couette flow at high Reynolds number in 2D and 3D},
  journal = {Bulletin of the American Mathematical Society},
  volume  = {56},
  number  = {3},
  pages   = {373--414},
  year    = {2019}
}

@article{Duguet.2010,
  author  = {Y. Duguet and L. Brandt and B. Larsson},
  title   = {Towards minimal perturbations in transitional plane Couette flow},
  journal = {Physical Review E},
  volume  = {82},
  number  = {2},
  pages   = {026316},
  year    = {2010}
}

@incollection{Lundbladh.1994,
  author    = {A. Lundbladh and D. Henningson and S. Reddy},
  title     = {Threshold Amplitudes for Transition in Channel Flows},
  booktitle = {Transition},
  publisher = {Springer},
  address   = {New York},
  year      = {1994},
  pages     = {309--318}
}

@article{S.Orszag.1980,
  author  = {S. Orszag and L. Kells},
  title   = {Transition to turbulence in plane Poiseuille and plane Couette flow},
  journal = {Journal of Fluid Mechanics},
  volume  = {96},
  pages   = {159--205},
  year    = {1980}
}

@article{Reddy.1998,
  author  = {S. Reddy and P. Schmid and J. Baggett and D. Henningson},
  title   = {On stability of streamwise streaks and transition thresholds in plane channel flows},
  journal = {Journal of Fluid Mechanics},
  volume  = {365},
  pages   = {269--303},
  year    = {1998}
}

@book{Yaglom.2012,
  author    = {A. Yaglom},
  title     = {Hydrodynamic Instability and Transition to Turbulence},
  series    = {Fluid Mechanics and Its Applications},
  number    = {100},
  publisher = {Springer},
  address   = {New York},
  year      = {2012}
}

@article{Bedrossian.2020,
  author  = {J. Bedrossian and P. Germain and N. Masmoudi},
  title   = {Dynamics near the subcritical transition of the 3D Couette flow I: Below threshold case},
  journal = {Memoirs of the American Mathematical Society},
  volume  = {266},
  number  = {1294},
  year    = {2020}
}

@article{Bedrossian.2017,
  author  = {J. Bedrossian and P. Germain and N. Masmoudi},
  title   = {On the stability threshold for the 3D Couette flow in Sobolev regularity},
  journal = {Annals of Mathematics},
  volume  = {185},
  number  = {2},
  pages   = {541--608},
  year    = {2017}
}

@article{Bedrossian.2016,
  author  = {J. Bedrossian and N. Masmoudi and V. Vicol},
  title   = {Enhanced dissipation and inviscid damping in the inviscid limit of the Navier-Stokes equations near the two dimensional Couette flow},
  journal = {Archive for Rational Mechanics and Analysis},
  volume  = {219},
  number  = {3},
  pages   = {1087--1159},
  year    = {2016}
}

@article{Bedrossian.2018,
  author  = {J. Bedrossian and F. Wang and V. Vicol},
  title   = {The Sobolev stability threshold for 2D shear flows near Couette},
  journal = {Journal of Nonlinear Science},
  volume  = {28},
  number  = {6},
  pages   = {2051--2075},
  year    = {2018}
}

@article{1key,
  author  = {R. Arbon and J. Bedrossian},
  title   = {Quantitative Hydrodynamic Stability for Couette Flow on Unbounded Domains with Navier Boundary Conditions},
  journal = {Communications in Mathematical Physics},
  volume  = {406},
  number  = {6},
  pages   = {129},
  year    = {2025}
}

@article{2key,
  author  = {G. Wang and W. Wang},
  title   = {Transition threshold for the 2-D Couette flow in whole space via Green's function},
  journal = {Journal of Mathematical Analysis and Applications},
  volume  = {550},
  number  = {1},
  pages   = {129585},
  year    = {2025}
}

@article{03,
  author  = {X. Luo},
  title   = {The Sobolev stability threshold of 2D hyperviscosity equations for shear flows near Couette flow},
  journal = {Mathematical Methods in the Applied Sciences},
  volume  = {43},
  pages   = {6300--6323},
  year    = {2020}
}

@article{O.Reynolds.1883,
  author  = {O. Reynolds},
  title   = {An experimental investigation of the circumstances which determine whether the motion of water shall be direct or sinuous and of the law of resistance in parallel channels},
  journal = {Proceedings of the Royal Society of London},
  volume  = {174},
  pages   = {935--982},
  year    = {1883}
}

\end{document}